\newtheorem{proposition}{Proposition}[section]
\newtheorem{theorem}[proposition]{Theorem}
\newtheorem{corollary}[proposition]{Corollary}
\newtheorem{lemma}[proposition]{Lemma}
\theoremstyle{definition}
\newtheorem{remark}[proposition]{Remark}
\newcommand{\N}{\mathbb{N}}
\newcommand{\bN}{\mathbb{N}}
\newcommand{\bZ}{\mathbb{Z}}
\newcommand{\Z}{\mathbb{Z}}
\newcommand{\R}{\mathbb{R}}
\newcommand{\C}{\mathbb{C}}
\newcommand{\bR}{\mathbb{R}}
\renewcommand{\Pr}{\mathbb{P}}
\newcommand{\bE}{\mathbb{E}}
\newcommand{\abs}[1]{\lvert#1\rvert} 
\newcommand{\norm}[1]{\lVert#1\rVert} 
\newcommand{\ind}[1]{\mathbbm{1}_{\{#1\}}} 
\newcommand{\indset}[1]{\mathbbm{1}_{#1}} 
\newcommand\restr[2]{\ensuremath{\left.#1\right|_{#2}}} 
\newcommand{\wt}{\widetilde}
\newcommand{\joint}{{\mathrm{joint}}}
\newcommand{\indi}{{\mathrm{ind}}}
\newcommand{\bP}{{\mathbb{P}}}
\newcommand{\compl}{\mathsf{C}}
\newcommand{\Lt}{L_{\mathrm{t}}}
\newcommand{\Ls}{L_{\mathrm{s}}}
\newcommand{\hittingLemma}{3.20}
\newcommand{\regenerationConstr}{4.5}
\newcommand{\abstractPropRem}{4.9}
\newcommand{\separationlemma}{4.12}
\numberwithin{equation}{section}
\title[Pair coalescence times of ancestral lineages of logistic branching random walks]{%
  Pair coalescence times of ancestral lineages of two-dimensional logistic branching random walks
}
\author{Matthias Birkner, Andrej Depperschmidt, Timo Schl\"{u}ter}
\date{\today}
\dedicatory{We dedicate this paper to Alison M.\ Etheridge on the occasion of her 60th birthday.}
\begin{document}
\maketitle

\begin{abstract}
  Consider two ancestral lineages sampled from a system of
  two-dimensional branching random walks with logistic regulation in
  the stationary regime. We study the asymptotics of their coalescence
  time for large initial separation and find that it agrees with well
  known results for a suitably scaled two-dimensional stepping stone
  model and also with Mal\'ecot's continuous-space approximation for the probability of identity by descent as a function of sampling distance.
  This can be viewed as a justification for the replacement of locally
  fluctuating population sizes by fixed effective sizes. Our main tool
  is a joint regeneration construction for the spatial embeddings of
  the two ancestral lineages.
\end{abstract}

\tableofcontents

\section{Introduction and main result}

\subsection{The Wright-Mal\'ecot formula}
Consider a population of a certain species which lives, reproduces and
evolves in a two-dimensional space.  We assume that the population is
composed of individuals with possibly different (but neutral) genetic
types, and that offspring disperse (only) locally around their
parent's location. Imagine that environmental conditions are -- at
least approximately -- homogeneous in space and time, that the
population has been around for quite long and has reached some sort of
``equilibrium'' with respect to its environment, and that the habitat
is very large compared to the distance a typical individual can travel
during their lifetime. Then it seems a reasonable mathematical
abstraction to assume that the state of the population can be
described by some spatial random process which is (in distribution)
shift-invariant in space and time and that one observes at any given
time this process in its equilibrium state.  For simplicity, we will
assume in the following that individuals are haploid, and that
mutations occur according to the so-called infinite alleles model,
i.e.\ a mutation will always generate a novel type.

Imagine that we sample two individuals from this population at a given
time, say one from the origin and another one at position $x$
($\in \R^2$, say). What is the probability $\phi(x)$ that the two
sampled individuals have the same genetic type?

For such a situation, assuming that space is continuous and given by
the two-dimensional plane $\R^2$, Gustave Mal\'ecot
\cite{Malecot:1948, Malecot:1969} (and also Sewall Wright \cite{Wright1943, Wright1946}
who formulated a series representation of the same term) gave a
remarkable approximation formula to answer this question, namely
\begin{equation}
  \label{eq:MF}
  \phi(x) \approx \frac{1}{2\pi \sigma^2 \delta + K_0(\sqrt{2\mu} \, \kappa/\sigma)}
  K_0\big( \sqrt{2\mu} \, \norm{x}/\sigma \big)
  \quad \text{ for } \norm{x} \ge \kappa.
\end{equation}
Here, the parameters have the following interpretations: $\delta>0$ is
the local population density (i.e., in a region $A$ one should
find on average $\delta |A|$ many individuals), $\sigma>0$ is the
standard deviation of the spatial displacement between a typical
individual and its parent, 
$\mu>0$ is the mutation probability per
generation and $\kappa > 0$ is a ``local scale'' parameter;
the intuitive idea is that the approximation breaks down for distances
smaller than $\kappa$.  Finally, $K_0(\cdot)$ is the modified Bessel
function of the second kind of order $0$ (see
Remark~\ref{rem:modBessel} below, where we recall relevant
properties).

G.~Mal\'ecot derived the formula \eqref{eq:MF} for $\phi(x)$
in the 1940ies by considering the recursion formula
\begin{equation}
  \label{eq:MFrecursion}
  \begin{split}
    \phi({y})
    & = (1-\mu)^2 \bigg( \frac{1 - \phi({0})}{\delta}
    \int_{\R^2} g_{\sigma^2}({y} - {z}) g_{\sigma^2}({z}) \,d{z}  \\
    & \hspace{5.75em} 
    + \int_{\R^2 \times \R^2} g_{\sigma^2}({z}) g_{\sigma^2}({z}') \phi({y}+{z}-{z}') \,d{z} d{z}'
    \Big)
  \end{split}
\end{equation}
($g_{\sigma^2}(\cdot)$ is the twodimensional normal density with covariance $\sigma^2$ times the identity matrix),
then taking the Fourier transform and (formally) inverting it.
Briefly, the rationale behind \eqref{eq:MFrecursion} is a backwards in time
argument. Imagine sampling two different individuals at spatial separation $y \neq 0$ and decompose
the event that they have the same type according to their ancestry one generation
ago. Neither must have experienced a mutation, which explains the factor $(1-\mu)^2$
on the right-hand side. Next, imagine that the spatial displacement of these two individuals
from their (respective) ancestors is described by a centred isotropic Gaussian displacement
with variance $\sigma^2$. The second integral on the right-hand side refers to the case that
these ancestors lived at different positions (and are then necessarily distinct, with a new
spatial separation) whereas the first integral comes from the case when the two ancestors
lived at the same position (and are then with probability $1/\delta$ in fact the same individual). 

This is arguably a remarkably early instance of the `modern'
retrospective viewpoint in mathematical population genetics which has
become very prominent since the introduction of the Kingman
coalescent and its
many ramifications \cite{Kingman1982} (and also \cite{Hudson1983-1, Hudson1983-2, Tajima1983}
in the biology literature).
It is, however, not entirely rigorous. In fact,
no stochastic population model exists in two-dimensional
continuous space $\R^2$ which has both a non-trivial and homogeneous
stationary distribution as well as a dynamics of embedded ancestral
lineages literally compatible with \eqref{eq:MFrecursion}.
The situation is different on the discrete space $\Z^2$, where one can
meaningfully condition on constant local population sizes/densities,
leading to the so-called stepping stone models (see also Remarks~\ref{rem:steppingstone}
and \ref{rem:MFproblems} below).

The Wright-Mal\'ecot formula \eqref{eq:MF} and its surrounding
philosophy (as well as its difficulties) are nicely explained e.g.\ in
\cite{BartonDepaulisEtheridge:2002} and also in \cite{Etheridge2006,
  Etheridge:EBP23}. See also Remark~\ref{rem:MFproblems} below, where
we discuss some aspects relevant to the present study.

\subsection{Logistic branching random walks}
\label{sect:log branching random walks}
As mentioned above (and discussed in more detail in
Remarks~\ref{rem:MFproblems}, \ref{rem:steppingstone} below), a
popular way of modelling spatially distributed populations is
to 
decompose the habitat, possibly somewhat artificially, into discrete
``demes'', whose population sizes or densities are exogeneously given
or fixed, maybe varying in space and time as a deterministic function.
Our aim in this paper is to derive an asymptotic analogue of
\eqref{eq:MF} for a population model which has a non-trivial
equilibrium in two-dimensional space with fluctuating local population
sizes due to randomness in the reproduction and an endogeneous
feed-back mechanism.  We consider one prototypical, discrete model
(both space, time and population numbers are discrete), for which our
approach is technically easier than in the continuum case. We believe
however that with more technical effort, continuous models could be
studied in a similar fashion, see also Section~\ref{sect:outlook}
below.

Specifically, we consider the following version of logistic branching
random walks (LBRW), which was studied in
\cite{BirknerDepperschmidt2007}. Let
$p=(p_{xy})_{x,y \in \bZ^d} =(p_{y-x})_{x,y \in \bZ^d}$ be a symmetric
aperiodic stochastic kernel on $\bZ^2$ with finite range $R_p \ge 1$,
$\lambda=(\lambda_{xy})_{x,y \in \bZ^d}$ a non-negative symmetric
kernel on $\bZ^2$ with finite range $R_\lambda$ and $m \in (1,3)$. We assume that $p$ is the transition kernel of an aperiodic and irreducible random walk on $\bZ^2$. For
a configuration $\xi \in \R_+^{\bZ^d}$ and $x \in \bZ^d$ we define
\begin{align}
  \label{def:f}
  f(x; \xi) \coloneqq \xi(x) \Big( m - \sum_{z \in \bZ^d} \lambda_{xz} \xi(z)  \Big)^+.
\end{align}
A configuration of the population $\eta$ is a (random) element of
$\bN_0^{\bZ^d\times \bZ}$ where $\eta_n(x)$ is the number of particles
at position $x \in \Z^d$ in generation $n\in\bZ$. The population
evolves in the following way: given $\eta_n$, the state of the
population at time $n$, each particle at $x$ has
$\mathrm{Pois}\big( \big(m - \sum_z \lambda_{z-x}
\eta_n(z)\big)^+\big)$ many offspring. We see from the parameter
$\big(m - \sum_z \lambda_{z-x}\eta_n(z)\big)^+$ that $\lambda$
introduces local competition and each individual at $z$ reduces the
average reproductive success of the focal individuals at $x$ by
$\lambda_{z-x}$. Thus we also call $\lambda$ the \emph{competition
kernel}. Lastly the offspring move a random walk step from their
parent's position independently of each other according to the
transition kernel $p$. Therefore, given $\eta_n$, we obtain by the
superposition properties of independent Poisson random variables 
the following generation via
\begin{align}
        \label{eq:law of eta in log branching}
        \eta_{n+1} (x)  \sim  \mathrm{Pois}\Bigl( \sum_{y \in \Z^d}
        p_{yx} f(y;\eta_{n}) \Bigr), \quad
        \text{ independently for } x \in \Z^d.
\end{align}
In summary, $(\eta_n)$ is a spatial population model with local
density-dependent feedback, i.e.\ the offspring distribution is
supercritical when there are few individuals in the vicinity and
subcritical whenever there are too many. In general this system is not
attractive, i.e., adding particles to the initial condition can
stochastically decrease the population at later times. This is owed
to the fact that the competition kernel is non-local as well as to the
fact that by discreteness of time, all sites are updated
simultaneously.

%
%
%
%
%

\begin{theorem}[\protect{\cite[Theorem~3 and Corollary~4]{BirknerDepperschmidt2007}}]
        \label{thm:eta_survival}
  Assume $m \in (1,3)$, $0< \lambda_0 \ll 1$, $\lambda_z \ll \lambda_0$ for $z \neq 0$.
 Then $(\eta_n)$ survives for all time globally and locally
  with positive probability for any non-trivial initial
  condition $\eta_0$.

  Given survival, $\eta_n$ converges in distribution to its unique
  non-trivial equilibrium $\overline{\nu}$.
\end{theorem}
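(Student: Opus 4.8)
\medskip
\noindent\emph{Idea of proof (see \cite{BirknerDepperschmidt2007}).}
The governing difficulty is the one flagged above: $(\eta_n)$ is not attractive, so neither survival nor convergence can be read off from a monotone coupling, and every comparison has to be made quantitative and \emph{two-sided} (enough particles, but not too many). The plan is therefore a renormalisation (``block construction'') comparing $(\eta_n)$, observed on a coarse space--time grid, with supercritical oriented percolation --- a by now classical strategy for non-attractive spatial models. First I would fix a large integer $L$, partition $\bZ^2$ into boxes of side $L$, and declare a configuration \emph{good on a box} $B$ when the number of particles in $B$ lies in a fixed window $[c_1 L^2, c_2 L^2]$ around the putative equilibrium density and, in addition, is spread out reasonably evenly over $B$. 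The core estimate to establish is a one-step propagation lemma: if $\eta_n$ is good on $B$ and on all boxes within distance $R_p+R_\lambda$ of $B$, then $\eta_{n+1}$ is good on $B$ and on each of its neighbouring boxes with probability at least $1-\varepsilon(L)$, where $\varepsilon(L)\to 0$ as $L\to\infty$ for fixed admissible parameters.

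This is where the hypotheses enter. The per-particle mean offspring number $\big(m-\sum_z\lambda_z\eta_n(z)\big)^+$, at densities of order $(m-1)/\lambda_0$, is governed by the deterministic logistic-type recursion $x\mapsto x(m-x)$, whose non-zero fixed point $m-1$ is attracting precisely because $\lvert 2-m\rvert<1$ for $m\in(1,3)$; this drift keeps the local count from leaving the window on either side, and it is the requirement that the feedback be \emph{stabilising} rather than (period-doubling) destabilising that forces $m<3$. Since $\lambda_0\ll 1$, the equilibrium density $\sim (m-1)/\lambda_0$ is large, so the generation-$(n+1)$ counts --- sums of order $L^2$ conditionally independent Poisson variables --- concentrate around their conditional means with Gaussian-type tails, making the deviation probabilities summable once $L$ is large; the assumption $\lambda_z\ll\lambda_0$ for $z\ne 0$ guarantees that the competition is effectively local and does not disturb this picture.

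Because the good-block events along the space--time grid depend, given the configuration a step earlier in a bounded neighbourhood, only on fresh and spatially local reproduction/dispersal randomness, they can be coupled by a standard $k$-dependent domination argument to a supercritical oriented site percolation on $\bZ^2\times\bN_0$ with parameter tending to $1$ as $L\to\infty$. Standard properties of supercritical oriented percolation (survival with positive probability, the shape theorem, positive density of the cluster) then give that, with positive probability, good blocks reappear near any fixed site at arbitrarily large times, i.e.\ both global and local survival --- \emph{once a good block has been created}. The missing ignition step is easier and separate: while the population is still sparse the feedback term is negligible, so $(\eta_n)$ can be coupled from below with a supercritical branching random walk of mean $m>1$; with positive probability this grows and spreads until some box of side $L$ carries $\Theta(L^2)$ sufficiently spread-out particles, whence, using the upper control, a good block has formed and the renormalisation takes over. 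This proves the first assertion for any non-trivial $\eta_0$.

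For the second assertion, the uniform upper density bound from the same propagation lemma makes the laws $\mathcal L(\eta_n)$ tight, and any subsequential (or Ces\`aro) limit is stationary; the lower bound on the survival event shows such a limit is non-trivial, giving existence of $\overline\nu$. Uniqueness of $\overline\nu$ and convergence $\mathcal L(\eta_n\mid\text{survival})\to\overline\nu$ would then be obtained by a coupling argument built on the renormalisation: using the percolation backbone, along which the good-block property regenerates, together with the finite range of $p$ and $\lambda$, one couples two copies of the dynamics started from any two non-trivial configurations so that on the survival event they agree on any prescribed finite set for all sufficiently large $n$, which pins down the limiting law. I expect the main obstacle to be exactly this last point together with the two-sided propagation lemma: non-monotonicity means one cannot simply sandwich the process, so the ``enough but not too many particles'' control and the agreement-coupling both have to be engineered by hand, and this is the technically heaviest part of \cite{BirknerDepperschmidt2007}.
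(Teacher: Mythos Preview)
The paper does not give its own proof of this theorem; it is quoted from \cite{BirknerDepperschmidt2007} and only some of the underlying mechanism is recalled in Section~\ref{sect:ResultsfromBDS24} (especially the coupling properties \eqref{eq:contraction}--\eqref{eq:propagation.coupling} and the slogan ``life plus good randomness leads to more life''). Your sketch is faithful to that approach: block construction on a coarse space--time grid, comparison with supercritical oriented percolation via a $k$-dependent domination, the role of $m\in(1,3)$ as the stability range of the logistic fixed point, the smallness of $\lambda$ yielding high density and hence Poisson concentration, an ignition phase via a supercritical BRW lower bound, and uniqueness/convergence via a coupling along the percolation backbone.

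One refinement worth noting, since it is exactly what the present paper extracts and uses later: in \cite{BirknerDepperschmidt2007,BirknerCernyDepperschmidt2016} the ``good'' property of a block is not merely ``particle count in a window $[c_1L^2,c_2L^2]$'' but is engineered so that it is a genuine \emph{contraction} property --- any two configurations that are both good on a block, run with the same driving noise, literally agree one coarse-grained time step later (cf.\ \eqref{eq:contraction}). This is stronger than a two-sided density control and is what makes the uniqueness/complete-convergence coupling essentially automatic once the percolation comparison is in place; you describe this coupling only as a separate final step. Apart from this emphasis (and the use of two distinct scales $L_{\mathrm s},L_{\mathrm t}$ rather than a single $L$), your outline matches the cited argument.
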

The restriction on $m$ stems from the fact that the proof in
\cite{BirknerDepperschmidt2007} requires that the logistic map
$x \mapsto x(m-\lambda x)^+$ has a (unique) strictly positive stable
fixed point (namely, $(m-1)/\lambda$).
For more discussion see \cite{BirknerDepperschmidt2007}.

\subsection{Ancestral lineages of two samples from LBRW}
\label{sect:Ancestral lineages}
Consider the stationary process $(\eta^{\mathrm{stat}}_n)_{n \in \Z}$
with $\mathcal{L}(\eta^{\mathrm{stat}}_n) = \overline{\nu}$ for all
$n \in \Z$, enriched with ``enough book-keeping'' to follow ancestries
of particles.  Since we have discrete particles, it is in principle
straightforward, though notationally cumbersome, to keep track of each
particle's ancestry.  We will not make this book-keeping explicit in
the following but keep in mind that it is in principle there (see also
Chapter~4 in \cite{Depperschmidt08} or Section~4 in
\cite{BirknerCernyDepperschmidt2016}).
\smallskip

Let $x \in \Z^d$, $x \neq 0$, sample one individual at random from $\eta^{\mathrm{stat}}_0$
at $0$ and one from $x$. (We implicitly condition on $\eta^{\mathrm{stat}}_0(0)>0$ and
$\eta^{\mathrm{stat}}_0(x)>0$ so that such sampling is possible. In the parameter
regime we consider, $\eta^{\mathrm{stat}}_0$ will typically have a very large population
density anyway, so the effect of this conditioning is extremely mild.) We denote \\[1ex]
\rule{1em}{0em} $X=(X_k)_{k \in \N_0}$,
$X_k $ = \parbox[t]{0.8\textwidth}{position of ancestor $k$ generations in the past
  of particle sampled at $0$}\\[1ex]
\rule{1em}{0em} $X'=(X'_k)_{k \in \N_0}$,
$X'_k $ = \parbox[t]{0.8\textwidth}{position of ancestor $k$ generations in the past
  of particle sampled at $x$} \\[1ex]
%
\noindent
and let $\tau_{\mathrm{coal}}$ be the time (generations backwards in
time) to the most recent common ancestor of the two sampled particles. Furthermore 
we define $\bP_{x,x'}$ to be the measure with $X$ started at $x$ and $X'$ at $x'$, i.e.\ 
we condition on $X_0=x$ and $X'_0=x'$.
\smallskip

The pair $(X, X')$ can be interpreted as two delayed coalescing random walks in a dynamic random environment (which is generated by the time reversal of the population's space-time occupation field $\eta$), see  
Section~\ref{subsect:Ancestral lineages in LBRW} and Appendix~\ref{sect:Coalescing probabilities} for details. Random walks in dynamic random environments (RWDRE) are a very active research topic, see e.g.\ the discussion and references in \cite{BirknerDepperschmidtSchlueter2024, BirknerCernyDepperschmidt2016}. However, the possibility of coalescence, which is our focus here, is a scenario that is typically not considered in the context of RWDRE.

Note that the dynamics of $(X, X')$ when averaging over $\eta$ is not Markovian (intuitively, at a given time, past behaviour of $(X, X')$ contains then information on the population densities the walks will experience in the future). When fixing $\eta$ (i.e., in the context of random walks in random environments considering the ``quenched law''), the dynamics is in Markovian (see Appendix~\ref{sect:Coalescing probabilities}) but the transition probabilities are space-time inhomogeneous and depend in a relatively complicated way on $(\eta_n(x), n \in \bZ_-, x \in \bZ^2)$.

\subsection{Asymptotics for pair coalescence times and the probability
  of identity by descent}
With all other parameters fixed, \eqref{eq:MF} yields $\phi(x) \to 0$
as $\norm{x}\to\infty$.  On the other hand, for fixed $x$ (and
$\delta, \sigma, \kappa$ fixed as well), \eqref{eq:MF} gives
$\phi(x) \to 1$ as $\mu \to 0$. In applications, we might be
interested in the population's genetics structure over large spatial
scales and the mutation rate at the locus under study might be very
small. Thus, it is not unnatural to assume that $\norm{x}$ is very large
and at the same time $\mu$, though positive, is tiny.

In such a situation, interesting structure can arise from scaling.
Assume that $x = Ny$ with $y \in \R^2 \setminus \{0\}$ and
$\mu=\mu_N = mN^{-2\gamma}$ with $m \in (0,\infty)$ and $\gamma \ge 1$
are scaled with $N\to\infty$ and we keep the other parameters
$\delta$, $\sigma$ and $\kappa$ fixed: we sample a pair with a large
separation and the mutation rate is very small (proportional to a
certain negative power of the separation).  In this scenario
\eqref{eq:MF} yields
\begin{align}
  \label{eq:MF.scaling}
  \frac{K_0\big( \sqrt{2 m} \norm{y} N^{1-\gamma} /\sigma \big)}{2\pi \sigma^2 \delta + K_0\big(\sqrt{2m} \, \kappa N^{-\gamma} /\sigma\big)}
  \sim \frac{-\log\big( \sqrt{2 m} \norm{y} N^{1-\gamma} /\sigma \big)}{-\log\big(\sqrt{2m} \, \kappa N^{-\gamma} /\sigma\big)}
  \, \mathop{\longrightarrow}_{N\to\infty} \, 1 - \frac{1}{\gamma}
\end{align}
where we used the asymptotics of $K_0(\cdot)$ near $0$ from \eqref{eq:BesselK0.bdry}.

\begin{remark}
	\label{rem:defn_prop_measures}
	To lessen the overall notation we will often write $\bP_{Nx}=\bP_{0,Nx}$ when we explicitly consider a particle starting at the origin and the other starting at $Nx$. Note that $N$ is just a scaling parameter while $x\in \bR^2\setminus\{0\}$ is some direction and we always assume $Nx$ to be in $\bZ^2$, e.g.\ by rounding. This small ``error'' has no influence on the results and carrying it through all calculations would unnecessarily clutter the proofs. Later, in the proofs, the calculations often only depend on the distance of the particles and not their actual positions. In these cases we also write $\bP_R$ if the particles start at distance $R>0$ to highlight this dependence.
\end{remark}
\begin{theorem}
  \label{thm:Pct}
  Let $d=2$, assume $m \in (1,3)$, $0< \lambda_0 \ll 1$, $\lambda_z \ll \lambda_0$ for $z \neq 0$.
  We have
  \begin{equation}
    \label{eq:limit_coalescence_time}
    \lim\limits_{N\to \infty} \bP_{Nx}\big( \tau_{\mathrm{coal}} >N^{2\gamma}  \big) = \frac{1}{\gamma}
    \quad \text{ for } \gamma \ge 1.
  \end{equation}
  ($\bP_{Nx}$ refers to sampling at separation $Nx$, $x \neq 0$.)
\end{theorem}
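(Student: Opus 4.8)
\emph{Overview.} The plan is to recognise the number $1/\gamma$ as the classical ``barely recurrent'' logarithmic ratio for planar random walk and to reduce $\tau_{\mathrm{coal}}$ to a hitting‑time problem for the difference of the two ancestral lineages. Concretely, for a genuine mean‑zero, finite‑variance random walk $W$ on $\Z^2$ started at distance $R$ from a fixed ball $B_{L_0}$ around the origin, the local CLT and the attendant Green's‑function estimates give
\begin{equation*}
  \bP_R\big( \tau^W_{B_{L_0}} > s \big) \;=\; \frac{2\log R}{\log s}\,\bigl(1 + o(1)\bigr) \qquad \text{as } R,s\to\infty \text{ with } \tfrac{\log R}{\log s} \text{ bounded away from } 0 ,
\end{equation*}
and with $R = \norm{Nx} \asymp N$ and $s \asymp N^{2\gamma}$ this is $\tfrac{2\log N}{2\gamma\log N}\to\tfrac1\gamma$ -- matching the scaling limit \eqref{eq:MF.scaling}. (A positive multiplicative constant in front of $s$ -- coming from the diffusivity and from the time rescaling below -- is swallowed by the logarithm, which is precisely why the limit is this clean.) Everything then reduces to transferring this estimate from $W$ to $\tau_{\mathrm{coal}}$.

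\emph{Step 1: the joint regeneration construction.} The central tool, announced in the abstract, is a joint regeneration construction for the pair of spatial embeddings $(X,X')$ together with the relevant part of the time‑reversed environment $\eta$. I would use it to produce random times $0 = \mathcal{T}_0 < \mathcal{T}_1 < \cdots$ with the following properties: (i) the gaps $\mathcal{T}_{i+1}-\mathcal{T}_i$, jointly with the increments of $(X,X')$ over these gaps, are i.i.d.\ after the first and have exponentially decaying tails, so $\mathcal{T}_i/i \to \bar\tau \in (0,\infty)$ and regeneration‑step counts and generation counts are interchangeable up to the constant $\bar\tau$ and lower‑order fluctuations; (ii) there is a fixed $L_0$ so that while $\norm{X-X'} > L_0$ the two lineages see independent local environments (a separation estimate), whence the difference $D_i := X_{\mathcal{T}_i}-X'_{\mathcal{T}_i}$ coincides, up to the first regeneration step $\sigma_0$ at which the lineages come within distance $L_0$, with a mean‑zero random walk $W$ on $\Z^2$ of positive‑definite covariance (the effective single‑lineage diffusivity, doubled; centredness from the spatial symmetry of $p$ and of $\overline\nu$); and (iii) whenever $\norm{D_i}\le L_0$, conditionally on the past and on not having coalesced yet, the lineages coalesce before $D$ next re‑enters $B_{L_0}$ with probability at least some $q_0>0$ (they become co‑located within $O(1)$ steps with positive probability, and co‑located lineages coalesce with probability of order $1/\overline\rho$, $\overline\rho=(m-1)/\lambda_0$).

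\emph{Step 2: the two‑sided comparison.} By (ii), and since coalescence forces co‑location, the event $\{\tau_{\mathrm{coal}}>t\}$ contains $\{\bar\tau\,\sigma_0 > t\}$ (up to lower‑order terms), which gives the lower bound $\bP_R(\tau_{\mathrm{coal}}>t) \ge \bP_R(\tau^W_{B_{L_0}} > t/\bar\tau + o(t)) = \tfrac{2\log R}{\log t}(1+o(1))$. For the matching upper bound, let $\sigma_0 < \sigma_1 < \cdots$ be the successive ``excursion‑entries'' of $D$ into $B_{L_0}$ and let $K$ be the index of the first after which coalescence occurs; by (iii), $K$ is stochastically dominated by a geometric$(q_0)$ variable, $\tau_{\mathrm{coal}} \le \bar\tau\,\sigma_K$ up to lower‑order terms, and each gap $\sigma_{j+1}-\sigma_j$ is bounded by the return time to $B_{L_0}$ of an i.i.d.‑increment walk started at bounded distance, whose tail is $O(1/\log s)$. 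Conditioning on $K$ and applying a union bound over the $K-1$ gaps, the ``extra'' time $\sigma_K-\sigma_0$ exceeds $\theta t$ with probability only $O(1/\log t)$, so
\begin{equation*}
  \bP_R\big(\tau_{\mathrm{coal}} > t\big) \;\le\; \bP_R\big(\tau^W_{B_{L_0}} > (1-\theta)t/\bar\tau\big) + O\!\left(\tfrac{1}{\log t}\right) \;=\; \frac{2\log R}{\log t}\,\bigl(1 + o(1)\bigr) ,
\end{equation*}
the error being negligible against $\tfrac{2\log R}{\log t}$ since $\log R\to\infty$. Taking $R=\norm{Nx}$, $t=N^{2\gamma}$ yields $\bP_{Nx}(\tau_{\mathrm{coal}}>N^{2\gamma})\to 1/\gamma$; for $\gamma=1$ one has $\log R/\log t\to 1$, and the same estimates, bracketed between radii $N^{1\pm o(1)}$, still deliver the limit $1$.

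\emph{Where the difficulty lies.} Step~2 is classical planar potential theory, the only mild subtleties being the regeneration‑step/generation dictionary and the heavy‑tailed but harmless excursion gaps near $B_{L_0}$. The hard part is Step~1, and within it the separation estimate together with the exponential integrability of the joint regeneration blocks: the averaged dynamics of $(X,X')$ is non‑Markovian, the LBRW is not attractive, and near one another the two lineages are correlated both through the shared environment and through recent common ancestry. Building joint regeneration times whose blocks are i.i.d.\ and exponentially integrable, inside which the ``far apart'' increments are exactly those of (or are coupled to) a mean‑zero finite‑variance random walk, and doing so with errors that do not accumulate over the $N^{2\gamma}$‑generation horizon, is where essentially all of the work is.
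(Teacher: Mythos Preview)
Your overall strategy---reduce $\tau_{\mathrm{coal}}$ to a planar hitting-time problem for the difference process via a joint regeneration construction, then invoke the $\tfrac{2\log R}{\log s}$ asymptotic---is the right one and matches the paper's philosophy. But two of your structural assumptions about the regeneration construction are stronger than what is actually available, and this matters for Step~2.

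First, the regeneration increments do \emph{not} have exponential tails here; the construction in \cite{BirknerDepperschmidtSchlueter2024} yields only a polynomial bound $\bP(\wt R_{i+1}-\wt R_i>t)\le Ct^{-\beta}$, where $\beta$ can be tuned large but stays finite. This is survivable (and you only really use moment bounds), but it should shape how you phrase Step~1. Second, and more seriously, the difference $D$ does \emph{not} coincide with a genuine random walk once the separation exceeds a fixed $L_0$. What one has instead is a total-variation coupling: the one-step law of $(\widehat X,\widehat X')$ differs from that of an independent pair by at most $C\norm{\widehat D}^{-\beta}$ (see \eqref{eq:TVdistance-joint-ind-1step in prop}). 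Since the walk can spend many steps at moderate separation before either coalescing or escaping to distance $N^\gamma$, these per-step errors must be summed, and a naive union bound over $N^{2\gamma}$ steps with separation possibly as small as $L_0$ blows up. The paper handles this via a Green-function-in-annulus estimate (Lemma~\ref{lem:Greenfct.annulus}): one shows that $\bE_{Nx}\big[\sum_{i<\widehat\tau_{N^\gamma}\wedge\widehat\tau_{\mathrm{near}}}\norm{\widehat D_i}^{-\beta}\big]\le CM^{2-\beta}$ by exhibiting a Lyapunov function $f(x)\approx -\norm{x}^{2-\beta}$, and this is what makes the accumulated coupling error vanish as $M\to\infty$. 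Your proposal has no analogue of this control.

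Tactically, the paper also proceeds slightly differently from your direct appeal to Erd\H{o}s--Taylor: it first establishes the annulus-exit probability $\bP_{Nx}(\widehat\tau_{N^\gamma}<\widehat\tau_{\mathrm{near}})\to 1/\gamma$ via the near-harmonicity of $\log\norm{\cdot}$ (Lemma~\ref{lemma:exitAnnulus}), then separately shows $\widehat\tau_{N^\gamma}\approx N^{2\gamma}$ in probability (Lemma~\ref{lem:AHs}, via the separation lemma and a functional CLT for the independent pair), and finally combines these. This decomposition is more robust precisely because it avoids needing the full Erd\H{o}s--Taylor machinery for a process that is only approximately a random walk. Your geometric-trials argument for the upper bound is essentially what the paper does in the passage around \eqref{eq:upper_bound_2v2}, so that part is fine.
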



\begin{corollary}[An asymptotic analogue of Mal\'ecot's formula for LBRW]
  \label{cor:Malecot}
  Under the assumptions of Theorem~\ref{thm:Pct}, for $\mu_N = m N^{-2\gamma}$
  with $\gamma \ge 1$, $m \in (0,\infty)$ we have
  \begin{align}
    \label{eq:LT_coalescence_time}
    \phi_N(Nx) =
    \bE_{Nx}\big[ (1-\mu_N)^{2 \tau_{\mathrm{coal}}} \big]
    \mathop{\longrightarrow}_{N\to\infty} 1 - \frac{1}{\gamma} .
  \end{align}
\end{corollary}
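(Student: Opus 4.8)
The plan is to read off the Laplace-transform asymptotics \eqref{eq:LT_coalescence_time} from the tail asymptotics of Theorem~\ref{thm:Pct}, using crucially that the exponent $\gamma$ in that theorem is an arbitrary real number $\ge 1$. First I would get rid of the discreteness in $(1-\mu_N)^{2\tau_{\mathrm{coal}}}$: since $\mu_N=mN^{-2\gamma}\to 0$, for all large $N$ one has $\mu_N\le -\log(1-\mu_N)\le 2\mu_N$, and writing $(1-\mu_N)^{2\tau_{\mathrm{coal}}}=e^{-2\tau_{\mathrm{coal}}(-\log(1-\mu_N))}$ this gives the elementary sandwich
\begin{equation}
  \label{eq:cor_sandwich}
  e^{-4\mu_N\tau_{\mathrm{coal}}}\ \le\ (1-\mu_N)^{2\tau_{\mathrm{coal}}}\ \le\ e^{-2\mu_N\tau_{\mathrm{coal}}}.
\end{equation}
Taking $\bE_{Nx}$, it therefore suffices to prove $\bE_{Nx}\big[e^{-c\mu_N\tau_{\mathrm{coal}}}\big]\to 1-1/\gamma$ for every fixed $c>0$, and then apply this with $c=2$ and $c=4$.

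For the upper bound I would fix a small $\epsilon>0$ and split according to whether $\tau_{\mathrm{coal}}\le N^{2(\gamma+\epsilon)}$ or not: on the first event bound $e^{-c\mu_N\tau_{\mathrm{coal}}}\le 1$, and on its complement use $\mu_N N^{2(\gamma+\epsilon)}=mN^{2\epsilon}$ to bound $e^{-c\mu_N\tau_{\mathrm{coal}}}\le e^{-cmN^{2\epsilon}}\to 0$. This yields
\begin{equation*}
  \limsup_{N\to\infty}\bE_{Nx}\big[e^{-c\mu_N\tau_{\mathrm{coal}}}\big]
  \ \le\ \limsup_{N\to\infty}\bP_{Nx}\big(\tau_{\mathrm{coal}}\le N^{2(\gamma+\epsilon)}\big)
  \ =\ 1-\frac{1}{\gamma+\epsilon},
\end{equation*}
using Theorem~\ref{thm:Pct} with exponent $\gamma+\epsilon>1$; letting $\epsilon\downarrow 0$ gives $\limsup_N\bE_{Nx}[e^{-c\mu_N\tau_{\mathrm{coal}}}]\le 1-1/\gamma$.

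For the lower bound, if $\gamma=1$ there is nothing to do since $\bE_{Nx}[e^{-c\mu_N\tau_{\mathrm{coal}}}]\ge 0=1-1/\gamma$. If $\gamma>1$, fix $\epsilon\in(0,\gamma-1)$, discard everything outside the event $\{\tau_{\mathrm{coal}}\le N^{2(\gamma-\epsilon)}\}$, and on that event bound $e^{-c\mu_N\tau_{\mathrm{coal}}}\ge e^{-c\mu_N N^{2(\gamma-\epsilon)}}=e^{-cmN^{-2\epsilon}}\to 1$, so that
\begin{equation*}
  \liminf_{N\to\infty}\bE_{Nx}\big[e^{-c\mu_N\tau_{\mathrm{coal}}}\big]
  \ \ge\ \liminf_{N\to\infty}e^{-cmN^{-2\epsilon}}\,\bP_{Nx}\big(\tau_{\mathrm{coal}}\le N^{2(\gamma-\epsilon)}\big)
  \ =\ 1-\frac{1}{\gamma-\epsilon},
\end{equation*}
again by Theorem~\ref{thm:Pct}, now with exponent $\gamma-\epsilon>1$; letting $\epsilon\downarrow 0$ gives $\liminf_N\bE_{Nx}[e^{-c\mu_N\tau_{\mathrm{coal}}}]\ge 1-1/\gamma$. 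Combining the two bounds with \eqref{eq:cor_sandwich} yields \eqref{eq:LT_coalescence_time}.

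I do not expect a substantial obstacle here: the corollary is essentially a translation between the tail of $\tau_{\mathrm{coal}}$ and its exponential Laplace transform. The only point requiring a little care is the ``transition window'' $\tau_{\mathrm{coal}}\asymp N^{2\gamma}$, which lies in neither the regime where $e^{-c\mu_N\tau_{\mathrm{coal}}}\approx 1$ nor the one where it is $\approx 0$; it is handled by squeezing the exponent between $\gamma-\epsilon$ and $\gamma+\epsilon$, noting that $\mu_N N^{2(\gamma-\epsilon)}\to 0$ while $\mu_N N^{2(\gamma+\epsilon)}\to\infty$, and invoking the monotonicity in $\gamma$ of Theorem~\ref{thm:Pct}. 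The boundary case $\gamma=1$ is the only place where one cannot decrease the exponent below the range of validity of Theorem~\ref{thm:Pct}, but there the target value is $0$ and the lower bound is automatic.
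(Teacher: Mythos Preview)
Your proof is correct and follows essentially the same approach as the paper: sandwich the Laplace transform between $\bP_{Nx}(\tau_{\mathrm{coal}}\le N^{2(\gamma\pm\epsilon)})$ plus negligible terms, apply Theorem~\ref{thm:Pct} at the shifted exponents, and let $\epsilon\downarrow 0$. Your treatment is in fact slightly more careful than the paper's in two places: you give an explicit two-sided bound \eqref{eq:cor_sandwich} where the paper just writes $(1-\mu_N)^{2\tau_{\mathrm{coal}}}\sim e^{-2\mu_N\tau_{\mathrm{coal}}}$, and you handle the boundary case $\gamma=1$ separately, whereas the paper's lower bound formally invokes Theorem~\ref{thm:Pct} at exponent $\gamma-\varepsilon/2<1$.
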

Our proof of Theorem~\ref{thm:Pct} uses a joint regeneration construction for the spatial embedding $(X,X')$ of the ancestral lineages of the two samples from Section~\ref{sect:Ancestral lineages}. This construction was introduced in \cite{BirknerDepperschmidtSchlueter2024} as a key technical tool to prove a quenched central limit theorem (literally, in a slightly different form since coalescence was not considered there). We recall relevant concepts and results from \cite{BirknerDepperschmidtSchlueter2024} in Section~\ref{sect:ResultsfromBDS24}.

\subsection{Remarks and discussion}

\begin{remark}[Modified Bessel function]
  \label{rem:modBessel}
  For convenience, we recall here properties of Bessel functions
  which are relevant for this text, see also e.g.\ \cite[\href{https://dlmf.nist.gov/10}{Ch.~10}]{NIST:DLMF}. For $\nu \in \C$,
  the modified Bessel equation of order $\nu$ is \cite[\href{https://dlmf.nist.gov/10.25.E1}{10.25.1}]{NIST:DLMF}
  \begin{equation}
    \label{eq:BesselODE}
    z^2 \frac{d^2}{dz^2}w(z) + z \frac{d}{dz}w(z) - (z^2+\nu^2)w(z) = 0.
  \end{equation}
  There are two `standard' solutions to \eqref{eq:BesselODE}, $I_\nu(\cdot)$ and $K_\nu(\cdot)$, the modified
  Bessel functions of the first and of the second kind, respectively, of order $\nu$.

  We focus here on the case $\nu = 0$ and we consider only real, non-negative arguments. $K_0(t)$
  then solves
  \begin{equation}
    \label{eq:BesselODE.0}
    \frac{d^2}{dt^2} K_0(t) + \frac{1}{t} \frac{d}{dt} K_0(t) - K_0(t) = 0, \quad t>0
  \end{equation}
  and it is the unique solution to \eqref{eq:BesselODE.0} with the
  following boundary behaviour, see e.g.\
  \cite[\href{https://dlmf.nist.gov/10.25.E3}{10.25.3}]{NIST:DLMF} and
  \cite[\href{https://dlmf.nist.gov/10.30.E3}{10.30.3}]{NIST:DLMF},
  \begin{align}
    \label{eq:BesselK0.bdry}
    K_0(t) \sim \sqrt{\frac{\pi}{2t}} \exp(-t) \text{ for }t \to\infty, \qquad
    K_0(0) \sim \log(1/t) \text{ for }t \to 0 .
  \end{align}
\end{remark}

\begin{remark}[Brownian motion viewpoint on Mal\'ecot's formula]
  As observed e.g.\ in \cite[Appendix]{BartonDepaulisEtheridge:2002}, 
  the Wright-Mal\'ecot formula \eqref{eq:MF} can alternatively be viewed
  as an assumption on the behaviour of the spatial embedding of two
  sampled ancestral lineages, at least while their spatial separation
  is larger than $\kappa$.  Namely, let us assume that two ancestral
  lineages, which where sampled with an initial separation $x$, cannot
  merge while separated by more than $\kappa$ and until that time the
  difference of their spatial embeddings behaves like a $2d$-Brownian
  motion $(B_t)_{t \ge 0}$ with variance $2\sigma^2$.
  (Here, we replaced discrete generations by continuous time, and we 
  implicitly assume that the mutation rate $\mu>0$ is small so that
  the replacement $(1-\mu)^{2 \tau} \approx e^{-2\mu\tau}$ is justified.)
  
  Put $\tau \coloneqq \inf\{t \ge 0 : \norm{B_t} \le \kappa\}$ and set
  \begin{equation}
    f_\mu(x) = \bE_x[\exp(-2\mu \tau) \phi(\kappa)], \quad x \in \R^2
  \end{equation}
  (where $\bE_x$ refers to the intial condition $B_0=x$). It is well known that 
  $f_\mu$ is the unique solution of
  \begin{equation}
    \begin{split}
      \sigma^2 \Delta f_\mu(x) - 2\mu f_\mu(x) = 0, & \quad \norm{x}>\kappa, \\
      f_\mu(x) = \phi(\kappa), & \quad \norm{x} \le \kappa.
    \end{split}
  \end{equation}
  By radial symmetry, $f_\mu(x) = g_\mu(\norm{x})$, where 
  $g_\mu$ solves (using the Laplace operator in polar coordinates)
  \begin{equation}
    \label{eq:g_mu}
    g_\mu''(r) + \frac1{r}g_\mu'(r) - \frac{2\mu}{\sigma^2} g_\mu(r) = 0, \quad r \ge \kappa
    \qquad \text{with} \quad g_\mu(\kappa)=\phi(\kappa), \; g_\mu(\infty)=0.
  \end{equation}
  Note that \eqref{eq:g_mu} coincides with the modified Bessel ODE \eqref{eq:BesselODE.0}
  up to a re-scaling (indeed, $\widetilde{g}(t) \coloneqq g_\mu(\sigma t/\sqrt{2\mu})$ solves \eqref{eq:BesselODE.0}
  for $\sqrt{2\mu}\kappa/\sigma < t < \infty)$), hence
  \[
    g_\mu(r) = \frac{\phi(\kappa)}{K_0\big(\sqrt{2 \mu} \hspace{0.1em} \kappa /\sigma \big)}
    K_0\big(\sqrt{2 \mu} \hspace{0.1em} r /\sigma \big)
    \quad \text{for } \; r \ge \kappa.
  \]

  The assumption on the behaviour of sampled ancestral lineages discussed above
  together with the strong Markov property of Brownian motion (and our assumption of
  mutations according to the infinite alleles model) implies then for $\norm{x} \ge \kappa$
  \begin{equation}
    \phi(x) = f_\mu(x) = g_\mu(\norm{x}) =
    \frac{\phi(\kappa)}{K_0\big(\sqrt{2 \mu} \hspace{0.1em} \kappa /\sigma \big)}
    K_0\big(\sqrt{2 \mu} \hspace{0.1em} \norm{x} /\sigma \big),
  \end{equation}
  which agrees with the right-hand side of \eqref{eq:MF}.

   But note that this argument shows the form of $\phi(x)$ involving the Bessel function, but in itself does not identify the prefactor. One can of course insert the value of $\phi(\kappa)$ from the right-hand side of \eqref{eq:MF} and then literally recover \eqref{eq:MF}, which seems however a bit circular.
   This is an instance of the fact that we are treating here (as well as later in the proofs in Section~\ref{sect:main.proof}) what happens when the two lineages are close as something like a ``black box'' where we have no explicit control.

\end{remark}

\begin{remark}[Hitting times for two-dimensional random walk]
  Let $(S_k)_{k \in \N_0}$ be an irreducible, centered 2-dimensional random walk with finite second moments
  and let $\tau_0 := \inf\{ k : S_k = 0\}$ be the hitting time of the origin. For $x(n) \in \bZ^2$ with $\norm{x(n)}_2 \to \infty$ we have 
  \begin{align}
    \label{eq:ErdosTaylor}
    \bP\big( \tau_0 > n \,\big|\, S_0=x(n)\big) \sim \frac{2 \log(||x||_2)}{\log n} \wedge 1 \qquad \text{as } n \to\infty.
  \end{align}
  This was first proved by Erd\H{o}s and Taylor \cite{ErdosTaylor:1960} for symmetric simple random walk on $\bZ^2$ (see \cite[(2.16)]  {ErdosTaylor:1960} and read $\rho = ||x(n)||_2$, assuming $\norm{x(n)}_2 \le n^{1/3}$, cf.\ \cite[(2.13)]{ErdosTaylor:1960}) and later extended in \cite[Thm.~3]{Sawyer:1977a}. 
  These proofs are computational and do make use of the explicit structure of a random walk as a sum of i.i.d.\ steps, which allows fairly explicit computation of generating functions as well as renewal decompositions according to returns to the origin and local CLT estimates. We refer to \cite[p.~354]{CoxGriffeath:1986} for a nice heuristic explanation of the idea behind them.
  \smallskip
  
  Recalling \eqref{eq:limit_coalescence_time} from Theorem~\ref{thm:Pct} we see that our main result can be viewed as establishing the Erd\H{o}s-Taylor asymptotics\ \eqref{eq:ErdosTaylor} for the difference $(S_k)_k \coloneqq (X_k-X'_k)_k$ of the two ancestral lines. Intuitively, this holds because even though the law of $X-X'$ is complicated (it is in particular not a random walk, unlike the situation in the stepping stone model, see Remark~\ref{rem:steppingstone} below), it is close to a random walk when the separation $X_k-X'_k$ is large (see the discussion in Section~\ref{sect:ResultsfromBDS24} and especially \eqref{eq:TVdistance-joint-ind-1step in prop}). In this sense, our study belongs to the circle of results which are sometimes called ``Lamperti problems'' in honour of John Lamperti's work in the 1960ies on locally perturbed random walks, see e.g.\ \cite{MPW17} for background and references.
    
 Our proof of Theorem~\ref{thm:Pct} uses the simple (and robust) idea that for $\gamma > 1$ and $N \gg M \gg 1$
 \begin{align*}
   \bP\big( \tau_0 > N^{2\gamma} \,\big|\, \norm{S_0}_2=N\big) 
   & \approx \bP\big( \norm{S}_2 \text{ hits $N^\gamma$ before dropping below $M$} \,\big|\, \norm{S_0}_2=N\big) \\
   & \approx \frac{\log(N) - \log(M)}{\log(N^\gamma)-\log(M)} \approx \frac{1}{\gamma}
 \end{align*}
 because $\bZ^2 \ni x \mapsto \log(\norm{x}_2)$ is ``almost harmonic'' for $S = X-X'$ (it is of course literally harmonic for 2d Brownian motion). See also Remark~\ref{rem:separate} below.
\end{remark}

\begin{remark}[Known problems with Mal\'ecot's formula]
  \label{rem:MFproblems}
  As mentioned above, the derivation of \eqref{eq:MF} in
  \cite{Malecot:1948, Malecot:1969} is based on the recursion
  \eqref{eq:MFrecursion} via `backwards in time analysis', however,
  there is no rigorous underlying forwards-in-time stochastic
  population model in the derivation. It is well known that in two
  dimensions the critical branching random walk dies out locally and
  builds large clumps in the regions in which it does survive. Similarly on compact
  spaces, such as a torus, the critical branching random walk dies out almost surely
  and forms arbitrarily dense clumps if conditioned to survive.
  J.~Felsenstein called this phenomenon the `pain in the torus' in
  \cite{Felsenstein1975}. It is also well known that critical
  branching Brownian motion dies out in $d=2$, see e.g.\
  \cite{Kallenberg:1977}, \cite{BramsonCoxGreven1993}. Another problem
  is that there is no obvious way (nor in fact a consistent way) of
  extending the recursions backwards in time to larger sample sizes.
  \smallskip 

  \noindent
  There are some remedies to overcome 
  these issues that were considered in the literature.
  \begin{itemize}
  \item[(a)] A class of models known as stepping stone models is very
    popular. Here, the space is discretised and constant local
    population sizes are enforced. One can think in these models that
    in the forwards in time evolution of type configurations
    individuals in the new generation choose their parents at random
    from some neighbourhood in the previous generation and adopt their
    type (at this stage it is possible to introduce selection and mutation). 
    Here the ancestral lineages perform random walks and
    accordingly the ancestries of samples of individuals perform
    coalescing random walks with a coalescence delay depending on the
    (constant) local population size.

    Stepping stone models were introduced by Kimura and Weiss in
    \cite{KimuraWeiss1964} and different flavours of these models as
    well as different questions were studied in
    \cite{WeissKimura:1965}, \cite{Maruyama1970}, \cite{Malecot1975},
    \citep{Sawyer:1976}, \cite{Shiga1981}, \cite{Shiga1988},
    \cite{Wilkinson-Herbots:1998, Wilkinson-Herbots:2003} and many
    others. For an overview see for example Chapter 6 in
    \cite{Etheridge:2011}.

    These models lead to elegant sampling formulas via duality with
    coalescing random walks (see also Remark~\ref{rem:steppingstone}
    below) and it has been observed that the resulting probability of identity as a function of sampling distance (see \eqref{eq:1} below) agrees already for rather moderate separation $x$ very well with $\phi(x)$ from \eqref{eq:MF} with suitably adjusted parameters, see e.g.\  \cite[Fig.~1]{BartonDepaulisEtheridge:2002}.
    On the other hand, the deterministic size restriction seems artificial and
    in particular not suitable for `ecological' spatial stochastic
    models.

  \item[(b)] In order to overcome the restriction to constant local
    population size one can consider branching random walks or related
    processes with local regulation. Here the offspring distribution
    is supercritical in sparsely populated regions and subcritical
    when there are many neighbours. The models from this class are
    natural extensions of stepping stone models, of branching random
    walks as well as of the contact process.

    There is a large body of literature in which such models are
    studied. For instance Bolker and Pacala \cite{BolkerPacala:97,
      BolkerPacala:99}, Law and Dieckmann  \cite{LawDieckmann:02},
    Murrell and Law \cite{MurrellLaw2003} study such models based on simulations and non-rigorous moment closure approximations.
    Mathematically rigorous analyses were carried out by Etheridge 
    \cite{Etheridge:2004}, Fournier and M\'el\'eard 
    \cite{FournierMeleard:2004}, Hutzenthaler and Wakolbinger 
    \cite{HutzenthalerWakolbinger:07}, Blath, Etheridge and Meredith 
    \cite{BlathEtheridgeMeredith:2007}, Birkner and Depperschmidt 
    \cite{BirknerDepperschmidt2007}, Pardoux and Wakolbinger 
    \cite{PardouxWakolbinger2011}, Finkel\-shtein, Kondratiev and Kutoviy \cite{FinkelshteinKondratievKutoviy2012}, Le, Pardoux and Wakolbinger
    \cite{LePardouxWakolbinger:2013}, Greven, Sturm, Winter and Z\"ahle
    \cite{greven2015multitype}, Maillard and Penington
    \cite{MaillardPenington2022}, Etheridge, Kurtz, Letter, Ralph and Tsui \cite{Etheridge+al:2024} and others.
    Many aspects of models with local regulation have been studied, see e.g.\ \cite{BirknerGantert21} for a partial overview and further discussion.
    However, there is little hope for explicit sampling formulas.
    
    \item[(c)] In a series of papers starting with the work by A.~M.~Etheridge \cite{Etheridge2008} and \cite{BartonEtheridgeVeber:2010} together with N.~H.~Barton and A.~V\'eber, the spatial-$\Lambda$-Fleming Viot process has been introduced and studied. Several properties of the process as well as extensions were considered in \cite{VeberWakolbinger:2015,EtheridgeFreemanStraulino2017,EtheridgeKurtz2019,EtheridgeVeberYu2020}. 
      The process solves the problems described at the beginning of this remark and 
      in principle elegant sampling formulas are available. In a suitable scaling limit, there is even an explicit analogue of the Wright-Mal\'ecot formula \eqref{eq:MF}, see \cite[Theorem~2.4]{Forien2022}.
      On the other hand, the derivation of the spatial-$\Lambda$-Fleming Viot process implicitly takes a large (local) population density limit and thus does not incorporate the possibility of local size fluctuations. 

  \item[(d)] One can consider the Fleming-Viot process (see e.g.\
    \cite{Etheridge:SuperprocessesBook}) or one of its `multiple
    merger' generalisations, a $\Lambda$- or $\Xi$-Fleming-Viot
    process (see e.g.\ \cite{DonnellyKurtz1999, BertoinLeGall2003,
      Birkner.et.al2009}) with underlying Brownian motion on a
    compact, two-dimensional continuous space like a suitable bounded
    domain in $\bR^2$ with periodic or reflecting boundary conditions.
    These processes have a fixed total mass, hence there is no problem
    with extinction nor with clumping, and observed (only) locally,
    the population size does fluctuate very much like a critical
    branching process (albeit not in a way that decorrelates with
    distance because of the total mass constraint).  They also have
    (suitably interpreted) sampling consistency for arbitrary sample
    sizes, see \cite{Koepernik2024}, and the ancestry of samples
    together with their spatial embeddings has an elegant
    description via so-called Brownian spatial coalescents; there is
    also an analogue of the Wright-Mal\'ecot formula in this context
    (\cite[Example~1.9]{Koepernik2024}).

    As observed in \cite[Section~1.7.3]{Koepernik2024}, for $d \ge 3$,
    the restriction to a compact (geographical) space can be overcome by
    considering an analogous programme based on the stationary version
    of superbrownian motion (which exists on $\bR^d$ for any $d\ge 3$,
    see e.g.\ \cite{Etheridge:SuperprocessesBook}).
  \end{itemize}

  As an aside, let us remark that regulating the branching rate
  based on the local population density while keeping the offspring
  law critical does not resolve the problem, as conjectured by
  Alison Etheridge in the early 2000s and later proved in
  \cite{BirknerSun:2019}. \smallskip

  Let us also note that in the context of classical interacting
  particle systems (e.g.\ summarised in \cite{Liggett:1999}),
  stepping stone models correspond to the voter model whereas the
  locally regulated models correspond to the contact process.
\end{remark}

\begin{remark}[Stepping stone models and relatives and identity by descent]
  \label{rem:steppingstone}
  Colonies of \emph{fixed} local size $N$ are arranged in a
  geographical space, say $\Z^d$.
  For each individual in colony $x$, with probability $p(x,y) =
  p(y-x)$ assign a random parent in previous generation from colony
  $y$. In these models the demographic structure is trivial,
  nevertheless these models are paradigm models for evolution of
  \emph{type distribution} in space.

  Ancestral lineages in such a model are coalescing random walks:
  Sample one individual from colony $x$ and one from colony $y$. The
  spatial positions of the ancestral lines are random walks with
  (delayed) coalescence. While not yet merged, each takes an
  independent step according to the random walk transition kernel $p$,
  every time they are in the same colony, the two lines merge with
  probability $1/N$.  Note that here, the recursive decomposition
  behind \eqref{eq:MFrecursion} becomes exact.

  Assume that during the reproduction each offspring mutates with
  probability $u$ (to a completely new type), let
  \[
    \psi(x,y) := \parbox[t]{0.7\textwidth}{probability in
      equilibrium that two individuals\\ randomly drawn from
      colonies $x$ and $y$ have same type}
  \]
  Assuming that $p$ is symmetric we have
  \begin{align}
    \label{eq:1}
    \psi(x,y) = \frac{1-\psi(0,0)}{N} \sum_{k=1}^\infty (1-u)^{2k} p_{2k}(x,y)
    = \frac{G_u(x,y)}{N+G_u(0,0)}
  \end{align}
  where $p_k$ is the $k$-step transition kernel and
    \begin{align}
    \label{eq:9}
    G_u(x,y) = \sum_{k=1}^\infty (1-u)^{2k} p_{2k}(x,y).
  \end{align}
  The first identity in \eqref{eq:1} is computed e.g.\ in Theorem~5.4
  in \cite{Durrett:PM4DNASE2ndEd} where one has to replace $2N$ by $N$
  in our case. The second identity in \eqref{eq:1} follows by using
  the first, solving for $\psi(0,0)$ and then substituting into the
  first identity.

  For the behaviour of $\psi(x,y)$ for $\norm{x-y} \to \infty$ we
  assume that $p$ is irreducible and can be written for some $\nu \in
  (0,1]$ as
  \begin{align}
    \label{eq:10}
    p(x,y) = (1-\nu) \delta_{x,y} + \nu q(x,y),
  \end{align}
  where $q$ is a symmetric translation invariant stochastic kernel on
  $\Z^2$ with $q(0,0)=0$, finite third moments and covariance matrix given by $\sigma^2$ times the two dimensional unit matrix. By Theorem~5.7 in
  \cite{Durrett:PM4DNASE2ndEd}, setting $\ell = (\nu\sigma^2/(2
  u))^{1/2}$, we have
  \begin{align}
    \label{eq:11}
     \psi(x,y) \sim \frac{1}{2\pi N + \log \ell}
    \Bigl(K_0(\norm{x-y}/\ell) - K_0(\norm{x-y})\Bigr),
  \end{align}
  where $\sim$ denotes asymptotic equivalence as $u \to 0$, and $K_0$
  is the modified Bessel function from \eqref{eq:BesselK0.bdry}.
  
  For an incomplete list of literature on different flavours of the stepping stone models we refer to Remark~\ref{rem:MFproblems}(a). Let us note that \cite{BartonDepaulisEtheridge:2002} observe a very good agreement with same quantity for the (discrete space) stepping stone
  model, cf.\ \cite[Fig.~1]{BartonDepaulisEtheridge:2002}. Also ``explicit'' results for the stepping stone model on the two-dimensional grid are obtained e.g.\ in \cite[Sect.~4.5]{Wilkinson-Herbots:1998}, where it is in fact also discussed that Mal\'ecots
  formula gives a good approximation of the result obtained there.
\end{remark}

\subsection{Outlook}
\label{sect:outlook}

As \cite[Section~7]{Etheridge2008} observes, ``it is widely believed that if one views populations over sufficiently large spatial and temporal scales, then there should be some averaging effect which would allow one to use classical population genetic models with constant population density but with effective parameters replacing the real population parameters''. We share this belief and hope that our results help to corroborate it. Still, many questions concerning ancestral lineages in locally regulated models remain open:
\begin{itemize}
\item As they stand, Theorem~\ref{thm:Pct} and Corollary~\ref{cor:Malecot} are `conceptual' rather than practical. They describe a mathematical limit without controlling how large $N$ needs to be and we have no explicit information about required conditions on the competition parameters. In addition, we identify the decay behaviour but our limit result does not capture a prefactor as in \eqref{eq:MF}.
Concerning the (very pertinent) question how to get numbers out of it, we would presently have to resort Monte Carlo estimates via computer simulations.

It would also be interesting to describe the asymptotic variance of an ancestral lineage from LBRW more explicitly. The regeneration construction from \cite{BirknerCernyDepperschmidt2016, BirknerDepperschmidtSchlueter2024} in principle gives a possible answer (namely, the variance of inter-regeneration increments divided by the mean waiting time between regenerations) but there seems no easy way to actually compute these.
\item On the other hand, the `abstract' regeneration construction from \cite{BirknerCernyDepperschmidt2016, BirknerDepperschmidtSchlueter2024} is in principle very flexible and allows to cover more general spatial population models with local regulation than just LBRW. We believe that any such model which has a `supercritical phase' where it can be compared to the discrete time contact process could be accommodated in this framework. It would be also very interesting to see whether continuous-time models could be treated in this way but this will require new ideas, maybe as in \cite{MaillardPenington2022}, because in continuous time there is no a priori bound on the `propagation of information'.

\item We focused here on the case $d=2$ but the cases $d=1$ and $d \ge 3$ could be treated as well. We believe by analogy with ordinary random walk that in $d=1$, coalescence times will have square root tails and have a positive chance to be $+\infty$ in $d\ge 3$ (in fact, the latter follows from the arguments in \cite{BirknerDepperschmidtSchlueter2024}). This in particular implies that for LBRW in $d \ge 3$ with neutral genetic types there exist multi-type equilibria.

\item A natural question would be to consider larger sample sizes than just two. For a related simpler model, the discrete time contact process in $d=1$, this was studied in \cite{BirknerGantertSteiber2019} and a Brownian web limit was established. It is conceivable that the same will hold for LBRW in $d=1$. It is also conceivable that in $d=2$, (neutral) multitype LBRW will exhibit diffusive clustering analogous to the classical voter model \cite{CoxGriffeath:1986}.

\item Last but not least it would be highly interesting to consider types with selective differences in locally regulated models and their effect on ancestral lineages.
\end{itemize}

\subsection{Outline}
We recall important results and tools from
\cite{BirknerDepperschmidtSchlueter2024} (and from
\cite{BirknerCernyDepperschmidt2016}) in
Section~\ref{sect:ResultsfromBDS24}; the proof of
Theorem~\ref{thm:Pct} is given in Section~\ref{sect:main.proof}, based
on a series of intermediate results that build on the tools from
Section~\ref{sect:ResultsfromBDS24}, with some proof details and
longer calculations relegated to Appendices~\ref{sect:proof of 1.6}--
\ref{sect:Coalescing probabilities}.

\section{Summary of relevant results from \cite{BirknerDepperschmidtSchlueter2024}}
\label{sect:ResultsfromBDS24}

In this section, we recall concepts, tools and results from \cite{BirknerDepperschmidtSchlueter2024}
(and from \cite{BirknerDepperschmidt2007}, \cite{BirknerCernyDepperschmidt2016}) which will be needed in the proofs in Section~\ref{sect:main.proof}.

  \subsection{Ancestral lineages in LBRW}
  \label{subsect:Ancestral lineages in LBRW}
  As discussed in Section~\ref{sect:Ancestral lineages}, we enrich the logistic branching random walks from \eqref{eq:law of eta in log branching} from Section~\ref{sect:log branching random walks} with genealogical information
(cf Chapter~4 in \cite{Depperschmidt08} or Section~4 in \cite{BirknerCernyDepperschmidt2016}). Conditioned on the space-time configuration $\eta$, the spatial embedding $X$ of an ancestral lineage from Section~\ref{sect:Ancestral lineages} is a time-inhomogeneous Markov chain with transition probabilities 
\begin{align}
  \label{eq:defX-real}
  \Pr(X_{k+1}=y \, | \, X_k=x,\eta) = 
  \frac{p_{yx}f(y;\eta_{-k-1})}{\sum_z p_{zx}f(z;\eta_{-k-1})} 
  \eqqcolon
  p_{ \eta}(k; x,y), \quad x,y \in
  \Z^d, \, k \in \Z_+
\end{align}
(with some arbitrary convention if the denominator is $0$).
The joint transition dynamics of the pair $(X,X')$ given $\eta$ is the product of terms as in \eqref{eq:defX-real} when $X_k \neq X'_k$ and contains the possibility of coalescence when they jump to the same site. See Appendix~\ref{sect:Coalescing probabilities} for details. Note that $p_{ \eta}(k; x,y)$ is close to $p_{yx}$ when $\eta_{-k-1}$ has small relative fluctuations in a neighbourhood of $x$.

\subsection{Coupling properties of $\eta$}

First we start with $\eta$, the process describing the evolution of the population. Recall that $\eta_n(x)$ is the number of individuals at position $x\in\bZ^2$ in generation $n$. For $m\in(1,3)$, $0<\lambda_0\ll 1$ and $\lambda_z\ll \lambda_0$ for $z\neq 0$, by Theorem~\ref{thm:eta_survival}, $\eta$ survives with positive probability and, conditioned on survival, converges in distribution to $\eta^{\rm stat}$. 
The key idea behind this result is to construct $(\eta_n)$ as in \eqref{eq:law of eta in log branching} as a function of a space-time `driving noise' which takes the form of a space-time system of independent Poisson processes, see \cite[Section~4.1]{BirknerCernyDepperschmidt2016}.

The corresponding deterministic model is a dynamical system $\zeta\coloneqq (\zeta_n)_n$ (also called coupled map lattices) on $[0,\infty)^{\bZ^2}$ defined by
\begin{equation}
        \label{eq:defn_zeta}
        \zeta_{n+1}(x) = \sum_{y\in\bZ^2} p_{x-y} \zeta_n(y) \Big( m
        - {\textstyle \sum_{z\in\bZ^2} \lambda_{z-y} \zeta_n(z) } \Big)^+ = \sum_{y\in\bZ^2} p_{x-y}f(y;\zeta_n),
\end{equation}
where $f$ here is the function introduced in \eqref{def:f}. Think of $\zeta_n(x)$ as the expected number of individuals at site $x$ in generation $n$. By \cite{BirknerDepperschmidt2007}, under the assumptions in Theorem~\ref{thm:eta_survival}, $(\zeta_n)$ has a unique non-trivial fixed point, i.e.\ $\zeta_n(x)$ converges to $(m-1)/\sum_z \lambda_z$ for all $x\in \bZ^2$. This is the crucial property which makes the coupling \eqref{eq:contraction}--\eqref{eq:propagation.coupling} below possible.

We interpret the ancestral lines as random walks in a dynamic random environment generated by the time reversal of $\eta^{\rm stat}$. Our approach to handle this environment is to link the model with supercritical oriented percolation using a coarse-graining technique. That is, for $L_s,L_t\in \bN$ we divide the $\bZ^2\times \bZ$ into space-time boxes whose `bottom parts' are centred at points on the coarse-grained grid $L_s\bZ^2\times L_t\bZ$ and define 
\begin{equation*}
    \mathsf{block}(\tilde{x},\tilde{n}) \coloneqq \big\{ (y,k)
  \in \Z^d \times \Z \, : \,
  \norm{y-L_{\mathrm{s}} \tilde{x}} \le
  L_{\mathrm{s}}, \tilde{n} L_{\mathrm{t}} < k \le
  (\tilde{n}+1) L_{\mathrm{t}}\big\}.
\end{equation*}
In \cite{BirknerCernyDepperschmidt2016} it was shown that $\eta$ satisfies a certain coupling property for the range of parameters we consider in Theorem~\ref{thm:Pct}. To be more precise, there exists a finite set of `good' local
configurations $G_{\eta} \subset \Z_+^{B_{2 L_{\mathrm{s}}}(0)}$ such that for any $(\tilde{x}, \tilde{n}) \in \Z^2 \times \Z$ and any
    configurations
    $\eta_{\tilde{n} L_{\mathrm{t}}}, \eta'_{\tilde{n} L_{\mathrm{t}}} \in
    \Z_+^{\Z^2}$ at time $\tilde{n} L_{\mathrm{t}}$,
    \begin{align}
      \label{eq:contraction}
      \begin{split}
        & \restr{\eta_{\tilde{n} L_{\mathrm{t}}}}{B_{2 L_{\mathrm{s}}}
          (L_{\mathrm{s}} \tilde{x})}, \, \restr{\eta'_{\tilde{n}
            L_{\mathrm{t}}}}{B_{2 L_{\mathrm{s}}} (L_{\mathrm{s}}
          \tilde{x})} \in G_{\eta} 
        \\[+0.5ex]
        & \Rightarrow \;\; \eta_{(\tilde{n}+1) L_{\mathrm{t}}}(y) =
        \eta'_{(\tilde{n}+1) L_{\mathrm{t}}}(y) \quad \text{for all $y$
          with} \; \norm{y-L_{\mathrm{s}}
          \tilde{x}} \le 3 L_{\mathrm{s}}  \\
        & \qquad \text{and} \quad \restr{\eta_{(\tilde{n}+1)
            L_{\mathrm{t}}}}{B_{2 L_{\mathrm{s}}} (L_{\mathrm{s}}
          (\tilde{x}+\wt{e}))} \in G_{\eta} \; \text{for all $\wt{e}$
          with} \; \norm{\wt{e}} \le 1,
      \end{split}
      \\ \intertext{whenever the driving noise is `good' and}
      \label{eq:propagation.coupling}
        & \restr{\eta_{\tilde{n} L_{\mathrm{t}}}}{B_{2 L_{\mathrm{s}}}(L_{\mathrm{s}} \tilde{x})}
          =  \restr{\eta'_{\tilde{n} L_{\mathrm{t}}}}{B_{2 L_{\mathrm{s}}}(L_{\mathrm{s}} \tilde{x})}
          \quad \Rightarrow \quad
          \eta_k(y) = \eta'_k(y) \;\; \text{for all} \; (y,k) \in
          \mathsf{block}(\tilde{x},\tilde{n}),
    \end{align}
where $\eta=(\eta_n)$ and $\eta'=(\eta'_n)$ are given by
\eqref{eq:law of eta in log branching} with the same driving noise but possibly different
initial conditions. In essence we are in a setting where `good' local configurations propagate from one box-level to the next given the local driving noise is `good'. Therefore, if one can show that good local randomness has high probability (typically close to 1), one can couple this propagation of `good' configurations to supercritical oriented percolation. Or phrased in a less mathematical way as in \cite[p.~1780]{BirknerDepperschmidt2007} ``life plus good randomness leads to more life, so show that bad randomness has small probability''.

\subsection{A joint regeneration construction}
\label{sect:joint regeneration construction}
We observe the random walks only along these boxes (we typically think of $L_t>L_s\gg \max\{R_p,R_\lambda\}$). More specifically, let $X$ be an ancestral lineage evolving in an environment generated by $\eta^{\rm stat}$, then we define the corresponding coarse-grained random walk $\wt{X}$ by
\begin{align*}
        \wt{X}_{n} \coloneqq \wt{\pi}(X_{nL_t}),
\end{align*}
where $\wt{\pi}\colon \bZ^2\to \bZ^2$ is the coarse-graining function
\begin{equation*}
        \wt{\pi}(x) = \wt{\pi}(x_1,x_2)=(\wt{x}_1,\wt{x}_2)\coloneqq \Big( \Big\lceil \frac{x_1}{L_s}-\frac{1}{2}  \Big\rceil,\Big\lceil \frac{x_2}{L_s}-\frac{1}{2} \Big\rceil \Big).
\end{equation*}
Thus, $\wt{X}$ observes $X$ only along the space-time boxes. In \cite{BirknerDepperschmidtSchlueter2024} we studied the behaviour of a pair of coarse-grained random walks $(\wt{X},\wt{X}')$ evolving in the same environment to prove a quenched CLT for $X$. The aim of this paper is to prove Theorem~\ref{thm:Pct} in which we study the coalescing time of two ancestral lineages. It is obvious that the two lineages will have to enter the same space-time box before they are able to coalesce. Therefore we approach this by first studying when the coarse-grained versions $(\wt{X},\wt{X}')$ of these two ancestral lineages meet and estimate the error terms to then obtain the statement for $(X,X')$. In the following we recall useful results for the coarse-grained random walks we obtained in \cite{BirknerDepperschmidtSchlueter2024} in the process of proving a quenched CLT for $(X,X')$. Note that, although we call $\wt{X}$ a coarse-grained random walk, it is in fact not exactly a random walk. But we invite the reader to think of it since this guides the proofs (treating the actual dynamics mostly leads to additional technical difficulties).

To study the behaviour of the two coarse-grained ancestral lineages we define a regeneration construction in \cite{BirknerDepperschmidtSchlueter2024}; see Construction~\regenerationConstr {} therein. The purpose of these regeneration times is to split the random walk into increments that evaluate independent parts of the environment. We construct a double cone that encompasses the paths of both random walks and the part of environment $\eta$ both walkers see during an increment. Furthermore, we allow regenerations only at times where we can ensure that the values of $\eta$ that were evaluated up to the regeneration have no influence on the values of $\eta$ in the future of the random walks. This construction isolates the values of $\eta$ inside the cone from the values outside, i.e.\ the values of the environment inside the cone are independent of the values outside. For a visual representation of the cone construction see Figure~\ref{fig:dcones}. Note that this isolation essentially stems from the fact that the construction is made in such a way that every path on the coarse-grained level has to hit a box where the local configuration is `good' and the local driving noise is `good' and thus the values of $\eta$ inside this box are determined only by that local noise. The ancestral lineages and all information about the environment they gather are contained in the inner cone. Say we know the value of $\eta$ somewhere outside the cone shell (in Figure~\ref{fig:dcones} illustrated as the red dot); by the dynamics of the environment this information propagates along a path that has to cross the cone shell (grey in Figure~\ref{fig:dcones}). By construction we only regenerate when the cone shell isolates the inner cone (green) from the environment outside and thus the cone shell ``shields'' the environment in the inner cone from the information along the red path. For a more fleshed out and mathematical discussion of the construction we refer to \cite[Construction~\regenerationConstr]{BirknerDepperschmidtSchlueter2024}.

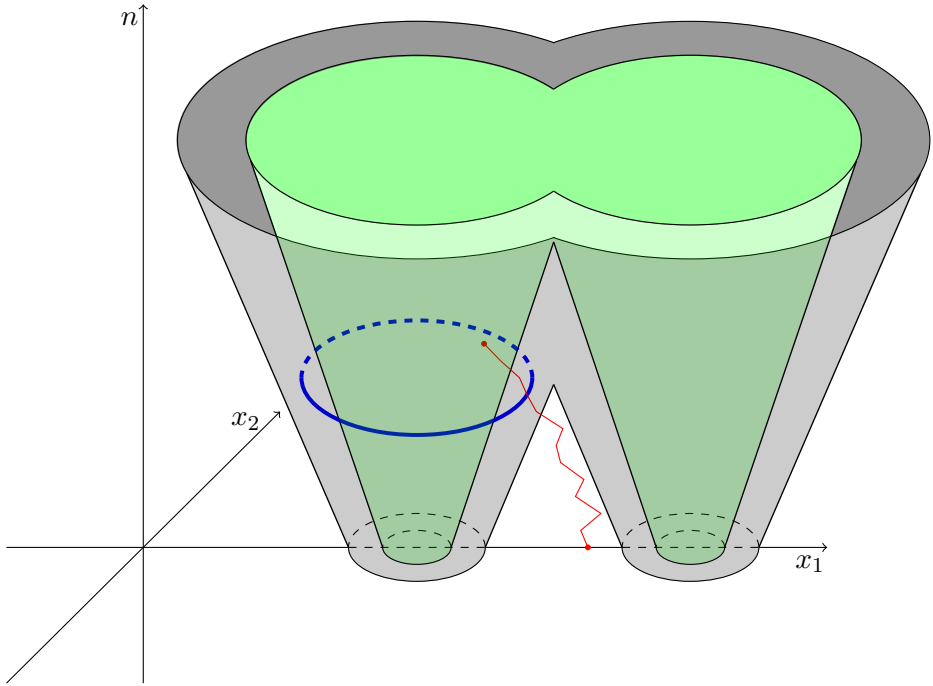
\begin{figure}
        \centering
        \begin{tikzpicture}[yscale=0.9,xscale=0.9]
                \definecolor{drawColor}{RGB}{0,0,0}


                \draw[->] (-6,-2) -- (-2,2);
                \draw (-6,0) -- (-1,0);
                \draw[dashed] (-1,0) -- (1,0);
                \draw (1,0) -- (3,0);
                \draw[dashed] (3,0) -- (5,0);
                \draw[->] (5,0) -- (6,0);
                \draw[->] (-4,-2) -- (-4,8);


                \node[text=drawColor,anchor=base,inner sep=0pt, outer sep=0pt, scale=  1.00] at (-4.2,7.7) {$n$};
                \node[text=drawColor,anchor=base,inner sep=0pt, outer sep=0pt, scale=  1.00] at (-2.5,1.8) {$x_2$};
                \node[text=drawColor,anchor=base,inner sep=0pt, outer sep=0pt, scale=  1.00] at (5.75,-.3) {$x_1$};

                \draw[dashed] (1,0) arc (0:180: 1cm and .5cm);
                \draw (-1,0) arc (180:360: 1cm and .5cm);
                \draw (-3.5,6) arc (180:55: 3.5cm and 1.75cm);
                \draw (-3.5,6) arc (180:305: 3.5cm and 1.75cm);
                \path[draw=drawColor,line width= 0.5pt,line join=round] (-1,0) -- (-3.35882,5.50116);
                \path[draw=drawColor,line width= 0.5pt,line join=round] (1,0) -- (2,2.4);

                \draw[dashed] (0.5,0) arc (0:180: 0.5cm and .25cm);
                \draw (-0.5,0) arc (180:360: 0.5cm and .25cm);
                \draw (-2.5,6) arc (180:37: 2.5cm and 1.25cm);
                \draw (-2.5,6) arc (180:323: 2.5cm and 1.25cm);
                \path[draw=drawColor,line width= 0.5pt,line join=round] (-0.5,0) -- (-2.428649,5.6945946);
                \path[draw=drawColor,line width= 0.5pt,line join=round] (0.5,0) -- (2,4.5);

                \draw[dashed] (5,0) arc (0:180: 1cm and .5cm);
                \draw (3,0) arc (180:360: 1cm and .5cm);
                \draw (7.5,6) arc (0:125: 3.5cm and 1.75cm);
                \draw (7.5,6) arc (360:235: 3.5cm and 1.75cm);
                \path[draw=drawColor,line width= 0.5pt,line join=round] (3,0) -- (2,2.4);
                \path[draw=drawColor,line width= 0.5pt,line join=round] (5,0) -- (7.35882,5.50116);

                \draw[dashed] (4.5,0) arc (0:180: 0.5cm and .25cm);
                \draw (3.5,0) arc (180:360: 0.5cm and .25cm);
                \draw (6.5,6) arc (0:143: 2.5cm and 1.25cm);
                \draw (6.5,6) arc (360:217: 2.5cm and 1.25cm);
                \path[draw=drawColor,line width= 0.5pt,line join=round] (3.5,0) -- (2,4.5);
                \path[draw=drawColor,line width= 0.5pt,line join=round] (4.5,0) -- (6.428649,5.6945946);


                \draw[blue,line width= 1.5pt] (-1.6875,2.5) arc (180:360:1.6875cm and 0.84375cm);
                \draw[dashed,line width= 1.5pt,blue] (1.6875,2.5) arc (0:180:1.6875cm and 0.84375cm);

                \draw[red] (2.5,0) -- (2.3959730,0.25) -- (2.6913578,0.5) -- (2.3182013,0.75) -- (2.4424640,1) -- (2.1008534,1.25) -- (2.0371980,1.5) -- (2.1338517,1.75) -- (1.7472884,2) -- (1.6095703,2.25) -- (1.5,2.5) -- (1.2227309,2.75) -- (0.9823701,3);

                \filldraw[red] (2.5,0) circle (1pt);
                \filldraw[red] (0.9823701,3) circle (1pt);


                \draw[fill=green, opacity=0.2] (0.5,0) arc (360:180:0.5cm and .25cm) -- (-2.428649,5.6945946) arc (194:323:2.5cm and 1.25cm) arc (217:346:2.5cm and 1.25cm) -- (4.5,0) arc (360:180:0.5cm and .25cm) -- (2,4.5) --(0.5,0);


                \draw[fill=green, opacity=0.4] (2,6.75) arc (37:323:2.5cm and 1.25cm) -- (2,5.25)
                arc (217:503:2.5cm and 1.25cm);


                \draw[fill=drawColor, opacity=0.2] (-1,0) -- (-3.35882,5.50116) arc (196:305:3.5cm and 1.75cm) -- (2,4.563859) arc (235:344:3.5cm and 1.75cm) -- (5,0) arc (360:180:1cm and .5cm) -- (3,0) -- (2,2.4) -- (1,0) arc (360:180:1cm and .5cm);


                \draw[fill=drawColor, opacity=0.4] (2,6.75) arc (143:-12:2.5cm and 1.25cm) -- (6.44120,5.73049) -- (6.05228,4.58241735)  arc (-54:125:3.5cm and 1.75cm)  -- (2,7.436141) -- (2,7.436141)  arc (55:234:3.5cm and 1.75cm) -- (-2.05228,4.58241735) -- (-2.44120,5.73049) arc (192:37:2.5cm and 1.25cm);

        \end{tikzpicture}
        \caption{Double cone with a double cone shell (grey), a time slice
                of the middle tube (blue), and a path of a random walk crossing
                the double cone shell from outside to inside (red).}
        \label{fig:dcones}
\end{figure}
Let $(\wt{R}_i)_i$, with $\wt{R}_0 =0$, be the regeneration times for the pair $(\wt{X},\wt{X}')$, obtained via the construction described above. We recall a few of the important properties obtained for this sequence in \cite{BirknerDepperschmidtSchlueter2024}. The first gives a control of the tail probabilities of the regeneration increments, that is
\begin{equation}
        \label{eq:regen_time_tail_bounds}
        \sup_{x_0,x'_0} \bP(\wt{R}_i -\wt{R}_{i-1}> t \,\vert\, X_0 =x_0,X'_0=x'_0) \le Ct^{-\beta},
\end{equation}
where $\beta$ is a positive constant that we can tune arbitrarily large because we can enforce a very high density of `good' boxes in $\eta^{\mathrm{stat}}$. We enrich the probability space by another independent environment $\eta'$ and consequently another ancestral line $X''$ evolving in $\eta'$ with its own coarse-grained version $\wt{X}^{''}$.
A central technique we then employ is to compare the pair $(\wt{X},\wt{X}')$ with $(\wt{X},\wt{X}^{''})$. Since $\wt{X}$ and $\wt{X}^{''}$ evolve in independent environments, they are in fact independent and we introduce regeneration times $(\wt{R}^{\indi}_i)_i$ for $(\wt{X},\wt{X}^{''})$ constructed in an analogous way to the cone construction for $(\wt{R}_i)_i$ but with the cone for $\wt{X}^{''}$ constructed in $\eta'$ instead of $\eta$. It is easy to see that the tails for the increments of $(\wt{R}_i)_i$ satisfy the same bound from \eqref{eq:regen_time_tail_bounds}.
From the description of the cone construction we recall that each cone isolates the environment on the inside from the outside. An intuitive application of this is that, as long as the regeneration happens sufficiently fast such that the cones remain well separated and in particular do not overlap, the increment for $(\wt{X},\wt{X}')$ should have almost the same distribution as the increment of $(\wt{X},\wt{X}^{''})$. Indeed we obtain the following coupling result
\begin{align}
        & \sum_{x \in \Z^d,x'\in \Z^d,m\in \N} \left|
        \bP\left( (X_{R^\joint_1}, X'_{R^\joint_1},R^\joint_1)=(x,x',m)\right)
        - \bP\left( (X_{R^\indi_1}, X''_{R^\indi_1},R^\indi_1)=(x,x',m)\right)
        \right| \notag \\
        \label{eq:TVdistance-joint-ind-1step in prop}
        & \hspace{2em} \le C\norm{x_0-x'_0}^{-\beta},
\end{align}
with $\beta$ being the some constant from \eqref{eq:regen_time_tail_bounds}. This coupling allows to transfer some asymptotic properties from the pair $(\wt{X},\wt{X}^{''})$ to $(\wt{X},\wt{X}')$.

There are other properties we can derive from this coupling result. The first we want to highlight is what we call a separation lemma, which allows us control the time two random walkers need to move apart to some distance. We denote by 
\begin{equation}
    \widehat{X}_k \coloneqq \wt{X}_{\wt{R}_k}, 
    \quad \widehat{X}'_k \coloneqq \wt{X}'_{\wt{R}_k}, 
    \qquad k \in \bN_0
\end{equation}
the walks observed along their joint regeneration times.

For $r>0$ we define $\widehat{H}(r)$ as the first time the coarse-grained random walks along their regeneration sequence are at a distance of at least $r$
\begin{align*}
        \widehat{H}(r) \coloneqq \inf\Big\{ k\in\bZ_+ \colon
        \norm{\widehat{X}_k - \widehat{X}'_k}_2 \ge r \Big\}
\end{align*}
and, for the other direction, let $\widehat{h}(r)$ be the first time the two walkers come together to a distance of $r$, that is
\begin{equation*}
        \widehat{h}(r) \coloneqq \inf\Big\{ k\in\bZ_+ \colon
        \norm{\widehat{X}_k - \widehat{X}'_k}_2 \le r \Big\}.
\end{equation*}
\begin{lemma}[Separation lemma]
        Let $d\ge 2$. For \label{lem:abstract separ} all small enough $\delta>0$ and $\varepsilon>0$ there exist $C,c>0$ such that
        \begin{equation}
                \label{eq:abstract sep d>=2}
                \sup_{x_0,x'_0}\bP^\joint_{x_0, x_0'}\Big( \widehat{H}(\tilde{n}^\delta) > \tilde{n}^{2\delta+\varepsilon} \Big) \le \exp(-C\tilde{n}^c).
        \end{equation}
\end{lemma}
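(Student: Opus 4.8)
The plan is to reduce \eqref{eq:abstract sep d>=2} to a \emph{single-window} estimate and then chain independent windows. It suffices to produce a constant $c_0>0$ and, for each fixed small $\varepsilon>0$, a lower bound $q=q(\tilde n)\ge\tilde n^{-\varepsilon/2}$ (for all large $\tilde n$) such that, uniformly over starting configurations $(x_0,x_0')$ with $\norm{x_0-x_0'}_2<\tilde n^{\delta}$,
\begin{equation*}
  \bP^\joint_{x_0,x_0'}\big(\widehat H(\tilde n^{\delta})\le c_0\tilde n^{2\delta}\big)\ \ge\ q .
\end{equation*}
Granting this, split $\{0,\dots,\tilde n^{2\delta+\varepsilon}\}$ into $\asymp\tilde n^{\varepsilon}$ consecutive windows of length $c_0\tilde n^{2\delta}$; by the strong Markov property at the joint regeneration times and the uniformity in the starting configuration, the probability of never reaching separation $\tilde n^{\delta}$ in any window is at most $(1-q)^{\lfloor\tilde n^{\varepsilon}/c_0\rfloor}\le\exp(-C\tilde n^{c})$ for suitable $C,c>0$, which is \eqref{eq:abstract sep d>=2} (for $\norm{x_0-x_0'}_2\ge\tilde n^{\delta}$ the left side of \eqref{eq:abstract sep d>=2} is $0$).

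For the single-window estimate I would compare the joint pair $(\wt X,\wt X')$ with the independent pair $(\wt X,\wt X'')$ via \eqref{eq:TVdistance-joint-ind-1step in prop}. The gain is that the difference $S_i:=\wt X_{\wt R^{\indi}_i}-\wt X''_{\wt R^{\indi}_i}$ is, after its first increment, a genuine random walk: its increments are i.i.d.\ by the regeneration structure, centred by the symmetry of the construction in the two lines, and of finite second moment because by \eqref{eq:regen_time_tail_bounds} and the bounded per-step displacement of the coarse-grained walk they have polynomial tails of order $\beta$ (which we fix large). For such an $S$ the radial map $x\mapsto\log\norm{x}_2$ (for $d=2$; $x\mapsto\norm{x}_2^{2-d}$ for $d\ge3$) is harmonic up to summable corrections, so an optional-stopping/gambler's-ruin argument shows that, started from separation $\ge D_{\min}$, $S$ reaches separation $\tilde n^{\delta}$ before dropping below $D_{\min}/2$ with probability $\gtrsim 1/\log\tilde n$ when $d=2$ (the binding case) and $\gtrsim 1$ when $d\ge3$, within $O(\tilde n^{2\delta})$ steps with probability bounded below by standard hitting-time estimates. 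The complementary short range $\norm{\cdot}_2<D_{\min}$, where \eqref{eq:TVdistance-joint-ind-1step in prop} is useless, I would handle by ``marching'': ellipticity of the coarse-grained joint dynamics — each regeneration step raises the separation by at least $1$ with probability bounded below, a property I expect to extract from \eqref{eq:defX-real} and the regeneration construction of \cite{BirknerDepperschmidtSchlueter2024} — lets the pair reach separation $D_{\min}$ within $O(D_{\min})$ steps with probability $\ge\exp(-O(D_{\min}))$.

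The remaining point, which I expect to be the main obstacle, is transferring the excursion of $S$ to the joint pair. Iterating \eqref{eq:TVdistance-joint-ind-1step in prop} naively over a whole window of $\tilde n^{2\delta}$ steps gives an error $\sum_i\bE[C\norm{S_i}_2^{-\beta}]$ that diverges, since the two-dimensional walk $S$ keeps returning close to the diagonal. The fix is to organise the window multi-scale, between successive ``scale checkpoints'' at which the separation doubles or halves, and to re-couple with a fresh independent copy at each checkpoint: on an inter-checkpoint segment the separation stays in a dyadic annulus $[D,2D]$, the segment lasts $O(D^{2})$ steps, and iterating \eqref{eq:TVdistance-joint-ind-1step in prop} only over it costs $O(D^{2-\beta})$ in total variation — summable since $\beta>2$, and small once the smallest scale used is $D_{\min}=(\log\tilde n)^{\theta}$ with $3/(\beta-2)<\theta<1$, which is possible because $\beta$ can be taken arbitrarily large in \eqref{eq:regen_time_tail_bounds}. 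The induced ``scale process'' for $S$ is an (almost) critical random walk on $\asymp\delta\log_2\tilde n$ dyadic levels, so reaching the top before returning below $D_{\min}$ has probability $\gtrsim 1/\log\tilde n$ while involving only $O((\log\tilde n)^{2})$ checkpoints; hence the accumulated transfer error is $O((\log\tilde n)^{2}D_{\min}^{2-\beta})=o(1/\log\tilde n)$. Gluing this with the short-range marching step yields a single-window success probability $\gtrsim\tilde n^{-o(1)}/\log\tilde n\ge\tilde n^{-\varepsilon/2}$, as needed. Besides this bookkeeping of the coupling error, the other delicate point will be to pin down the short-range ellipticity of the coarse-grained joint dynamics cleanly; the underlying random-walk estimates are routine but must be uniform over the $\asymp\log\tilde n$ scales and are mildly dimension-sensitive, $d=2$ being critical.
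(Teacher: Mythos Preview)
The paper does not prove this lemma here; it is recalled from \cite{BirknerDepperschmidtSchlueter2024} (where it is Lemma~\separationlemma), so there is no in-paper proof to compare against. The only hint this paper gives about the original argument appears in Appendix~\ref{sect:proof of 1.6}, where it invokes ``constructing suitable `corridors' (as in the proof of Lemma~\separationlemma{} in \cite{BirknerDepperschmidtSchlueter2024})'' to lower-bound the probability of a prescribed short sequence of increments. That suggests the proof in \cite{BirknerDepperschmidtSchlueter2024} leans on a direct ellipticity/marching construction, at least for part of the range, rather than on the full multi-scale coupling with the independent pair that you outline.

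Your overall architecture --- a uniform single-window success probability $q\ge\tilde n^{-\varepsilon/2}$ followed by chaining over $\asymp\tilde n^{\varepsilon}$ windows via the Markov property at joint regeneration times --- is correct and does produce the stretched-exponential bound. The single-window strategy is also plausible, and you have correctly identified the two genuine obstacles: (i) extracting uniform short-range ellipticity of the regenerated \emph{joint} dynamics from the construction in \cite{BirknerDepperschmidtSchlueter2024}, and (ii) the multi-scale bookkeeping for the coupling error. On (ii), note that beyond the per-segment cost $O(D^{2-\beta})$ you also need control on the number of visits the scale process makes to each dyadic level (it can revisit small scales many times), and you need high-probability rather than expected bounds on the inter-checkpoint durations so that the total time actually fits inside the window; as written, the sum of segment lengths is $\tilde n^{2\delta+o(1)}$ rather than $c_0\tilde n^{2\delta}$, so the window length should be taken slightly larger (say $\tilde n^{2\delta+\varepsilon/2}$), which still leaves $\asymp\tilde n^{\varepsilon/2}$ windows and does not affect the conclusion. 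None of this looks fatal, but it is a nontrivial amount of work that the original corridor-based argument may sidestep.
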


An important technique for the proofs is a delicate interplay of the distance between $\widehat{X}$ and $\widehat{X}'$ and the control over the behaviour we can employ. Another vital component for the proofs below is Lemma~\hittingLemma{} from \cite{BirknerDepperschmidtSchlueter2024}, there proved for a toy model but, using Remark~\abstractPropRem{} from \cite{BirknerDepperschmidtSchlueter2024}, we obtain the following lemma.
\begin{lemma}[Hitting probabilities for spheres]
        \label{lemma:exitAnnulus}
        Put for $r_1<r<r_2$
        \begin{align}
                f_d(r;r_1,r_2)= \begin{cases}
                        \frac{r-r_1}{r_2-r_1}, & \text{when }d=1,\\
                        \frac{\log r - \log r_1}{\log r_2 - \log r_1}, \quad & \text{when }d=2,\\
                        \frac{r_1^{2-d}-r^{2-d}}{r_1^{2-d}-r_2^{2-d}}, \quad & \text{when }d\geq 3.
                \end{cases}
        \end{align}
        For every $\varepsilon >0$ there are (large) $R$ and $\wt{R}$ such
        that for all $r_2>r_1>R$ with $r_2-r_1>\wt{R}$ and
        $x,y \in \bZ^d$ satisfying $r_1 < r=\norm{x-y}_2<r_2$
        \begin{align}
                (1-\varepsilon)f_d(r;r_1,r_2) \leq \Pr^{\indi}_{x,y}\left(\widehat{H}(r_2)<\widehat{h}(r_1)\right)
                \leq (1+\varepsilon)f_d(r;r_1,r_2).
        \end{align}
\end{lemma}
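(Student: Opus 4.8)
The plan is to reduce the assertion to a hitting estimate for a single mean-zero random walk and then to invoke the abstract form of \cite[Lemma~\hittingLemma]{BirknerDepperschmidtSchlueter2024}. The key point is that $\Pr^{\indi}_{x,y}$ refers to the situation in which the two coarse-grained lineages evolve in \emph{independent} environments, so that, observed along their joint regeneration sequence $(\widehat R^{\indi}_i)_i$, the pair $(\widehat X,\widehat X'')$ has i.i.d.\ increments --- up to the first increment and up to the mild dependence that the joint regeneration rule can introduce between the two coordinates, both of which are exactly what Remark~\abstractPropRem{} of \cite{BirknerDepperschmidtSchlueter2024} is designed to absorb. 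By symmetry of $p$ these increments are centred, by the symmetry assumptions on the model their covariance is a scalar multiple of the identity, and since a single coarse step displaces $\widehat X$ by a bounded amount, \eqref{eq:regen_time_tail_bounds} makes the increments possess moments of every order below the (tunable) exponent $\beta$.

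Consequently I would pass to the difference process $S_k\coloneqq \widehat X_k-\widehat X''_k$, which for these purposes is a mean-zero, isotropic, finite-(high-)moment random walk on $\mathbb{Z}^d$ starting at distance $r$ from the origin, and for which $\{\widehat H(r_2)<\widehat h(r_1)\}$ is precisely the event that $S$ leaves the Euclidean annulus $\{r_1<\norm{\cdot}_2<r_2\}$ through the outer sphere. For such a walk the statement is classical potential theory and is the content of the toy-model Lemma~\hittingLemma{} in \cite{BirknerDepperschmidtSchlueter2024}: take the function $g_d$ that is harmonic for $S$ off a bounded set --- $g_1(x)=x$, the potential kernel $a(x)=c\log\norm{x}_2+C+o(1)$ when $d=2$, the Green's function $G(x)=c\,\norm{x}_2^{2-d}(1+o(1))$ when $d\ge3$ --- so that $g_d(S_{k\wedge T})$ is an \emph{exact} martingale for $T\coloneqq\widehat H(r_2)\wedge\widehat h(r_1)$ (using that the annulus with $r_1>R$ avoids the bounded exceptional set, and that $T<\infty$ a.s.\ by recurrence when $d\le2$ resp.\ because $\norm{S_k}_2\to\infty$ when $d\ge3$). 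Optional stopping then yields a two-state identity $g_d(x-y)=\Pr^{\indi}_{x,y}(\widehat H(r_2)<\widehat h(r_1))\,\bE[g_d(S_T)\mid\text{outer exit}]+\bigl(1-\Pr^{\indi}_{x,y}(\widehat H(r_2)<\widehat h(r_1))\bigr)\,\bE[g_d(S_T)\mid\text{inner exit}]$, and replacing $a$, $G$ by their radial asymptotics and bounding the overshoot/undershoot at the two spheres via the moment bounds (here $r_1>R$, and for \emph{relative} precision also $r_2-r_1>\widetilde R$, enter) gives, after solving for the probability, exactly $f_d(r;r_1,r_2)$ up to a multiplicative factor $1\pm\varepsilon$.

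It then remains to transfer this from the toy model to the genuine pair $(\widehat X,\widehat X'')$: by Remark~\abstractPropRem{} of \cite{BirknerDepperschmidtSchlueter2024} together with the one-step coupling \eqref{eq:TVdistance-joint-ind-1step in prop}, the actual increments differ from genuine i.i.d.\ isotropic ones only by $O(\norm{\cdot}_2^{-\beta})$ in total variation, which is negligible since all radii exceed $R$ and $\beta$ can be chosen as large as we like.

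I expect the main obstacle to be uniformity over the full range $r_2>r_1>R$, $r_2-r_1>\widetilde R$ --- in particular when $r_2\gg r_1$, so that $\Pr^{\indi}_{x,y}(\widehat H(r_2)<\widehat h(r_1))$ is tiny and only a multiplicative error is admissible. This forbids working with a merely \emph{approximately} harmonic function: the per-step non-harmonicity of $\log\norm{\cdot}_2$ (resp.\ $\norm{\cdot}_2^{2-d}$) is only $O(\norm{\cdot}_2^{-2})$, but it would accumulate over the $\Theta(r_2^2)$ steps (in $d=2$) that $S$ spends inside the annulus; using the exact potential kernel / Green's function removes this, at the price of a careful relative-precision analysis of the boundary overshoot --- the genuinely delicate step, carried out in \cite[Lemma~\hittingLemma]{BirknerDepperschmidtSchlueter2024}. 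One must also check that the $O(\norm{\cdot}_2^{-\beta})$ discrepancy between the toy model and reality does not spoil this small probability, which is why taking $\beta$ large (larger than $d-2$, and than $1$) matters.
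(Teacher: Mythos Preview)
Your approach is essentially the paper's: the paper does not give an independent proof but simply states that the lemma is Lemma~\hittingLemma{} of \cite{BirknerDepperschmidtSchlueter2024} (proved there for a toy random walk via exactly the potential-kernel/Green's-function optional-stopping argument you sketch) transferred to the genuine regenerated pair via Remark~\abstractPropRem{} of \cite{BirknerDepperschmidtSchlueter2024}. Two small points: first, the appeal to \eqref{eq:TVdistance-joint-ind-1step in prop} is misplaced, since that estimate compares the \emph{joint} and \emph{ind} dynamics, whereas here you are already under $\Pr^{\indi}$ and the only discrepancy from a true i.i.d.\ walk comes from the joint-regeneration rule and the first increment---precisely what Remark~\abstractPropRem{} handles on its own; second, the claim that the increment covariance is a scalar multiple of the identity is not a standing assumption of the paper (only symmetry $p_z=p_{-z}$ is), so in general the potential kernel has elliptical rather than circular level sets, but this anisotropy is absorbed into the multiplicative $(1\pm\varepsilon)$ once $r_1>R$ is large, as in \cite[Lemma~\hittingLemma]{BirknerDepperschmidtSchlueter2024}.
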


\section{Details for and proof of Theorem~\ref{thm:Pct}
and Corollary~\ref{cor:Malecot}}
\label{sect:main.proof}

\subsection{Tails for meeting times}
\label{sect:Tails}
Consider two particles which start at a distance of $\norm{x}N$, where $x\in\bR^2 \setminus \{ 0 \}$ and $N\in\bN$. Recall the notation introduced in Remark~\ref{rem:defn_prop_measures}.
We are interested in the asymptotic behaviour of the meeting time for the two particles, more specifically we want to show the following proposition.
\begin{proposition}
        \label{prop:limi_meeting_time}
        For $x\in\bR^2 \setminus \{ 0 \}$ and $\gamma>1$ the following limit holds
        \begin{equation}
                \label{eq:limit_meeting_time}
                \lim\limits_{N\to \infty} \bP_{Nx}\big( \tau_{\mathrm{meet}} >N^{2\gamma}  \big) = \frac{1}{\gamma}.
        \end{equation}
\end{proposition}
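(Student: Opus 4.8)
\emph{Strategy.} The plan is to transport the Erd\H{o}s--Taylor heuristic recalled above to the coarse-grained ancestral lineages observed along their joint regeneration times. Write $\widehat S_k\coloneqq\widehat X_k-\widehat X'_k$ for the coarse-grained difference (so $\|\widehat S_0\|_2\asymp N/L_{\mathrm s}$; the factor $L_{\mathrm s}$ cancels in all ratios below and we suppress it), fix $\gamma>1$, and introduce an inner mesoscopic scale $N^\theta$ with $\theta>0$ small and sent to $0$ only at the end, together with outer scales $N^{\gamma\pm\varepsilon}$. Since the two lineages can occupy a common site only after $\widehat S$ has returned to a bounded distance, bounding $\bP_{Nx}(\tau_{\mathrm{meet}}>N^{2\gamma})$ amounts to bounding the first real time at which $\|\widehat S\|_2$ drops to $O(1)$. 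The engine is that $x\mapsto\log\|x\|_2$ is \emph{almost} harmonic for $\widehat S$: by Lemma~\ref{lemma:exitAnnulus} this holds literally on annuli for the \emph{independent} pair, and the one-step coupling \eqref{eq:TVdistance-joint-ind-1step in prop} propagates it to the joint dynamics at a total-variation cost $\le Cr^{-\beta}$ per regeneration step, $r$ the current separation. With $\beta$ chosen large (permissible, since we may enforce a very high density of good boxes) and the number of regeneration steps truncated at a polynomial level via the separation Lemma~\ref{lem:abstract separ}, the accumulated coupling error over an excursion of $\widehat S$ between radii $N^\theta$ and $N^{\gamma\pm\varepsilon}$ is $O(N^{2\gamma-\theta\beta})=o(1)$. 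We also use, as consequences of \eqref{eq:regen_time_tail_bounds} (which for $\beta>1$ makes the number of regeneration steps up to real time $t$ of order $t$, with $\widehat R_m\ge m$), of Lemma~\ref{lem:abstract separ}, and of the quenched CLT of \cite{BirknerDepperschmidtSchlueter2024}: a maximal inequality for $\widehat S$, and the fact that $\widehat S$ reaches radius $N^{\gamma-\varepsilon}$ within real time $<N^{2\gamma}$ with probability $1-o(1)$.

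\emph{Upper and lower bounds.} On $\{\tau_{\mathrm{meet}}>N^{2\gamma}\}$ the separation stays $\gtrsim1$ throughout $[0,N^{2\gamma}]$. If it also stays above $N^\theta$ on the first half $[0,N^{2\gamma}/2]$, then (using that $\widehat S$ has reached radius $N^{\gamma-\varepsilon}$ by real time $N^{2\gamma}/2$) the event $\{\widehat H(N^{\gamma-\varepsilon})<\widehat h(N^\theta)\}$ has occurred, of probability $\le(1+o(1))\frac{\log N^{1-\theta}}{\log N^{\gamma-\varepsilon-\theta}}+o(1)=\frac{1-\theta}{\gamma-\varepsilon-\theta}+o(1)$ by Lemma~\ref{lemma:exitAnnulus} and the coupling transfer; otherwise a restart at the first time the separation reaches $N^\theta$ and the strong Markov property bound the contribution by $\sup_{R\le N^\theta}\bP_{R}(\tau_{\mathrm{meet}}>cN^{2\gamma})$ for a fixed $c\in(0,1)$ (after the routine splitting of $[0,N^{2\gamma}]$ just made). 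For the lower bound, on $\{\widehat H(N^{\gamma+\varepsilon})<\widehat h(N^\theta)\}$ -- of probability $(1-o(1))\frac{1-\theta}{\gamma+\varepsilon-\theta}$ by the same transfer -- the lineages have not come within $N^\theta$ before reaching radius $N^{\gamma+\varepsilon}$, and by the maximal inequality reaching that radius takes more than $N^{2\gamma}$ regeneration steps, hence (since $\widehat R_m\ge m$) more than $N^{2\gamma}$ units of real time, with probability $1-o(1)$; so $\tau_{\mathrm{meet}}>N^{2\gamma}$ there. Letting $N\to\infty$, then $\varepsilon\downarrow0$, then $\theta\downarrow0$ (enlarging $\beta$ at each stage) squeezes $\bP_{Nx}(\tau_{\mathrm{meet}}>N^{2\gamma})$ to $\frac1\gamma$, \emph{provided} $\limsup_N\sup_{R\le N^\theta}\bP_{R}(\tau_{\mathrm{meet}}>cN^{2\gamma})\to0$ as $\theta\downarrow0$. (Here $\gamma>1$ enters through the requirement $N^\theta<N<N^{\gamma-\varepsilon}$ needed to apply the annulus estimate.)

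\emph{Main obstacle.} The remaining point -- that once the coarse-grained lineages are within a small power $N^\theta$ of each other they meet within $N^{2\gamma}$ except with probability $O(\theta)$ -- is the crux, and is precisely the ``close regime'' the introduction flags as a black box: there the one-step coupling \eqref{eq:TVdistance-joint-ind-1step in prop} is vacuous, and genuinely so, since at bounded separations the joint pair can coalesce whereas the independent pair cannot, so Lemma~\ref{lemma:exitAnnulus} stops applying. The plan is to descend from $N^\theta$ to an arbitrarily small \emph{fixed} power $N^{c}$ by iterating the annulus estimate over geometrically decreasing scales (at each of which the coupling is still affordable), which costs only $O(\theta)$ in probability for escapes back to the macroscopic range, and then to treat the truly microscopic range -- coarse-grained distance $O(1)$, i.e.\ the two lineages within a single block -- by a separate near-diagonal input on the joint dynamics: conditionally on a good local configuration and good local driving noise the two lineages have a uniformly positive chance of landing on the same site, which combined with near-diagonal neighbourhood recurrence of $\widehat S$ yields an Erd\H{o}s--Taylor-type tail $\sup_{R\le R_0}\bP_{R}(\tau_{\mathrm{meet}}>t)=O(1/\log t)$. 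Beyond this, the bulk of the effort is bookkeeping: keeping the accumulated coupling errors, the separation-lemma truncations, and the three nested limits mutually consistent.
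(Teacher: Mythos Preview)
Your strategy is in the same spirit as the paper's: reduce to the coarse-grained difference along joint regenerations, use the annulus exit estimate (Lemma~\ref{lemma:exitAnnulus}) transferred from the independent to the joint pair via \eqref{eq:TVdistance-joint-ind-1step in prop}, and control time scales via the separation lemma and the regeneration-tail bound. There are, however, two substantive differences and one genuine gap.

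\textbf{Choice of inner scale.} You work with a mesoscopic threshold $N^\theta$ and send $\theta\downarrow 0$ at the end; the paper instead uses a \emph{fixed constant} $M$ for $\widehat\tau_{\mathrm{near}}$. This trade-off matters. With constant $M$ the lower bound needs an extra ingredient: between regeneration times the actual walks $X,X'$ could meet even though $\|\widehat D_i\|\ge M$, and controlling this requires showing that the event $\{\wt R_{i+1}-\wt R_i\ge c_0\|\widehat D_i\|\text{ for some }i<\widehat\tau_{N^\gamma}\wedge\widehat\tau_{\mathrm{near}}\}$ is rare. The paper does this via a Green-function/Lyapunov estimate (Lemma~\ref{lem:Greenfct.annulus}), bounding $\bE_{Nx}\big[\sum_{i<\widehat\tau}\|\widehat D_i\|^{-\beta}\big]$. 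Your $N^\theta$ threshold would let you replace this by a crude union bound (max of $O(N^{2\gamma+\varepsilon})$ increments is $\ll N^\theta$ w.h.p.\ if $\beta$ is large), but you have not written this step: your lower bound currently asserts ``$\tau_{\mathrm{meet}}>N^{2\gamma}$ there'' directly from $\widehat H(N^{\gamma+\varepsilon})<\widehat h(N^\theta)$, which is a statement only about the walks sampled at times $L_{\mathrm t}\wt R_i$, not at all intermediate times.

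\textbf{The ``main obstacle''.} You correctly locate the crux in the close regime, and your sketch (iterate down scales, then invoke a uniformly positive meeting probability at $O(1)$ separation) is the right idea. The paper carries this out explicitly, but not via geometric scale descent from $N^\theta$; rather, having chosen $M$ constant, it first proves directly that $\bP_{Nx}(\widehat\tau_{\mathrm{near}}>\widehat\tau_{N^\gamma})\to 1/\gamma$ by introducing a single intermediate bridge scale $m_N=\sqrt{\log\log N}$ (so that from $m_N$ one reaches $M$ with probability $\ge\delta^{m_N}$ via corridors, while from $\widetilde m_N=\log N$ one reaches $N^\gamma$ before $m_N$ with probability $\sim(\log\log N)/\log N$; summing the geometric series gives $o(1)$). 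For the upper bound on $\tau_{\mathrm{meet}}$ itself, the paper then iterates: from distance $M$ the original walks meet with probability $\ge\rho>0$; if not, separate to $\log N$ (cost $\le\log^4 N$ regeneration time), return to $M$ (cost $\le N^{2\tilde c}$ with $\tilde c$ small), repeat $\log^{1/2}N$ times. Your proposal defers this to ``bookkeeping'', but it is in fact where most of the work lies; in particular your target estimate $\sup_{R\le N^\theta}\bP_R(\tau_{\mathrm{meet}}>cN^{2\gamma})\to 0$ as $\theta\downarrow 0$ is essentially a rescaled instance of the proposition itself and cannot be obtained without an argument of this type.
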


\begin{remark}
\label{rem:separate}
        Our approach to proving the above proposition will show that there is essentially only one possibility for the event $\{\tau_{\mathrm{meet}} >N^{2\gamma}\}$ to occur and that is, when the two random walkers separate to a distance of $N^\gamma$ before meeting. From equation~\eqref{eq:AH2} we see that this takes $N^{2\gamma}$ many steps. Thus we morally also show that
        \begin{equation*}
                \bP_{Nx} \big( \max_{i \le \tau_{\mathrm{meet}}} \norm{X_i-X'_i} \ge
                N^\gamma \big) \to 1/\gamma.
        \end{equation*}
\end{remark}

The first step toward proving Proposition~\ref{prop:limi_meeting_time} is to consider the behaviour of the coarse-grained pair $(\widehat{X},\widehat{X}')$ introduced in Section~\ref{sect:joint regeneration construction} and wait for those to meet. This gives us a time at which we can ensure that $X$ and $X'$ are at a constant distance. For this reason we start by proving
\begin{align}
  \label{eq:main_claim}
  \lim\limits_{N\to\infty} \bP_{Nx}\big( \widehat{\tau}_{\mathrm{meet}} > N^{2\gamma} \big) = \frac{1}{\gamma}, \qquad \gamma \ge 1,
\end{align}
where we understand $\bP_{Nx}$ as the distribution where one particle is started at the origin and the other at the site $Nx$.

To prove this we use two crucial claims we prove later on. The two particles each perform a random walk (on the coarse-graining level) which we denote by $\widehat{X}$ and $\widehat{X}'$. Since we are interested in the meeting time, it is sufficient to consider the difference of these two and we set $\widehat{D}\coloneqq \widehat{X}-\widehat{X}'$. Moreover we define the stopping times
\begin{equation}
  \label{def:tau_near}
  \widehat{\tau}_{\mathrm{near}} \coloneqq \inf\{ k \colon \norm{\widehat{D}_k} \le M\}
\end{equation}
for some constant $M\in \bR$, and
\begin{equation}
  \label{def:tau_R}
  \widehat{\tau}_{R} \coloneqq \inf\{ k\colon \norm{\widehat{D}_k} \ge R \}
\end{equation}
for some $R\in \bR$.
Note that in the following lemmas we study the behaviour of the coarse-grained pair $(\widehat{X},\widehat{X}')$. While we still write $\bP_{Nx}$ we somewhat sweep under the rug that under $\bP_{Nx}$ the distance between $\widehat{X}$ and $\widehat{X}'$ is actually $N\norm{x}/L_s$ and under $\bP_R$ it is $R/L_s$. However this linear scaling doesn't change the calculations done and we therefore omit this scaling to leave the proofs more accessible to read.
\begin{lemma}
  \label{lem:AHs}
  For $x\in\bR^2 \setminus \{ 0 \}$ and $\gamma>1$ the following two limits hold
  \begin{equation}
    \label{eq:AH1}
    \bP_{Nx} \big( \widehat{\tau}_{\mathrm{near}} > \widehat{\tau}_{N^\gamma}\big) \xrightarrow[N\to\infty]{} \frac{1}{\gamma}
  \end{equation}
  and for $\varepsilon>0$
  \begin{equation}
    \label{eq:AH2}
    \bP_{Nx}\big( N^{2\gamma-\varepsilon} \le \widehat{\tau}_{N^\gamma} \le N^{2\gamma+\varepsilon} \big) \xrightarrow[N\to\infty]{} 1.
  \end{equation}
\end{lemma}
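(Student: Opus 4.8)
The plan is to reduce both statements to the hitting-probability estimates for the coarse-grained pair that are collected in Section~\ref{sect:ResultsfromBDS24}, namely Lemma~\ref{lemma:exitAnnulus} (hitting probabilities for spheres) and Lemma~\ref{lem:abstract separ} (the separation lemma), together with the coupling bound \eqref{eq:TVdistance-joint-ind-1step in prop}. The key point is that $\widehat{D}=\widehat{X}-\widehat{X}'$ observed along the joint regeneration times is, by \eqref{eq:TVdistance-joint-ind-1step in prop}, close in total variation to the difference of two genuinely independent coarse-grained walks, and for the latter $x\mapsto\log\norm{x}_2$ is essentially harmonic, so that \eqref{eq:AH1} should read off directly from the $d=2$ case of $f_d$.

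For \eqref{eq:AH1}: I would apply Lemma~\ref{lemma:exitAnnulus} with $r_1=M$ (more precisely $M/L_s$, suppressing the harmless linear rescaling as the statement already announces), $r=N\norm{x}$ and $r_2=N^\gamma$. Since $\{\widehat\tau_{\mathrm{near}}>\widehat\tau_{N^\gamma}\}$ is exactly the event $\{\widehat H(N^\gamma)<\widehat h(M)\}$ in the notation of Lemma~\ref{lemma:exitAnnulus}, that lemma gives, for every fixed $\varepsilon'>0$ and $N$ large,
\begin{equation*}
  (1-\varepsilon')\,\frac{\log(N\norm{x})-\log M}{\log(N^\gamma)-\log M}
  \;\le\;\bP^{\indi}_{\cdot}\big(\widehat\tau_{\mathrm{near}}>\widehat\tau_{N^\gamma}\big)
  \;\le\;(1+\varepsilon')\,\frac{\log(N\norm{x})-\log M}{\log(N^\gamma)-\log M},
\end{equation*}
and the middle fraction tends to $1/\gamma$ as $N\to\infty$. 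To pass from the independent pair to the true joint pair $(\widehat X,\widehat X')$ I would iterate the one-step coupling \eqref{eq:TVdistance-joint-ind-1step in prop}: up to the stopping time $\widehat\tau_{\mathrm{near}}\wedge\widehat\tau_{N^\gamma}$ the two walkers stay at distance $\ge M$, so each regeneration step incurs a coupling error $\le C M^{-\beta}$, and since by the tail bound \eqref{eq:regen_time_tail_bounds} and \eqref{eq:AH2} the relevant number of regeneration steps is at most polynomial in $N$ with overwhelming probability, the accumulated error can be made $o(1)$ by first choosing $M$ large (independently of $N$) and then letting $N\to\infty$; a standard $\varepsilon$-$M$ interchange then yields the limit $1/\gamma$. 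One must also use Lemma~\ref{lem:abstract separ} to guarantee that the walkers do separate to distance $N^\gamma$ in at most $N^{2\gamma+\varepsilon}$ regeneration steps except on an event of stretched-exponentially small probability, so that the ``number of steps is polynomial'' input is legitimate and so that the event in \eqref{eq:AH1} is not spoiled by the walkers dawdling forever near distance $M$.

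For \eqref{eq:AH2}: on the event $\{\widehat\tau_{\mathrm{near}}>\widehat\tau_{N^\gamma}\}$ (and also on its complement, by restarting) the walkers reach distance $N^\gamma$; the separation lemma~\ref{lem:abstract separ}, applied with the choice $\widetilde n^\delta\sim N^\gamma$, i.e.\ $\widetilde n\sim N^{\gamma/\delta}$ and $2\delta+\varepsilon'$ tuned so that $\widetilde n^{2\delta+\varepsilon'}\sim N^{2\gamma+\varepsilon}$, gives the upper bound $\widehat\tau_{N^\gamma}\le N^{2\gamma+\varepsilon}$ with probability $1-o(1)$. For the matching lower bound $\widehat\tau_{N^\gamma}\ge N^{2\gamma-\varepsilon}$ I would use a second-moment / diffusive-scaling argument for $\widehat D$: by the coupling \eqref{eq:TVdistance-joint-ind-1step in prop} and the finite-moment properties of the increments of the independent coarse-grained walks (as recorded in \cite{BirknerDepperschmidtSchlueter2024}), $\bE\norm{\widehat D_k-\widehat D_0}^2 \le C k$ up to controlled errors, so $\widehat D$ cannot move a distance $N^\gamma$ in fewer than $N^{2\gamma-\varepsilon}$ steps except with vanishing probability (Doob/Chebyshev applied to the submartingale-type quantity $\norm{\widehat D_k}^2 - Ck$). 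Combining the two bounds gives \eqref{eq:AH2}.

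The main obstacle I anticipate is the bookkeeping in \eqref{eq:AH1}: transferring the clean hitting estimate from the independent pair $(\widehat X,\widehat X'')$ to the true pair $(\widehat X,\widehat X')$ requires controlling the coupling error uniformly over an unbounded (random) number of regeneration steps while the walkers stay at distance $\ge M$, and simultaneously ruling out, via Lemma~\ref{lem:abstract separ}, the ``non-generic'' scenario in which the walkers neither meet nor separate to $N^\gamma$ within a polynomial number of steps. Once the order of limits ($N\to\infty$ after $M\to\infty$, with the annulus estimate of Lemma~\ref{lemma:exitAnnulus} providing the $(1\pm\varepsilon')$ sandwich) is set up carefully, the remaining estimates are routine applications of \eqref{eq:regen_time_tail_bounds}, \eqref{eq:TVdistance-joint-ind-1step in prop}, Lemma~\ref{lem:abstract separ} and Lemma~\ref{lemma:exitAnnulus}.
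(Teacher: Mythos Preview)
Your plan for \eqref{eq:AH2} is essentially the paper's: both use the separation lemma for the upper tail, and for the lower tail the paper couples to the independent pair and invokes a functional CLT for $\widehat D^{\indi}$ rather than your moment/Chebyshev bound, but either route works once the coupling error (of size $N^{2\gamma-\varepsilon}\cdot CN^{-\beta\gamma}$, which does vanish) is controlled.

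For \eqref{eq:AH1}, however, your coupling bookkeeping has a genuine gap. You bound each regeneration step by the worst case $CM^{-\beta}$ and then multiply by a polynomial-in-$N$ number of steps; but $N^{2\gamma+\varepsilon}\cdot CM^{-\beta}$ diverges as $N\to\infty$ for every fixed $M$, so no ``$\varepsilon$--$M$ interchange'' saves this. The honest accumulated error is $C\,\bE\!\sum_{i<\widehat\tau}\norm{\widehat D_i}_2^{-\beta}$, which requires the Green-function estimate of Lemma~\ref{lem:Greenfct.annulus} and yields only $O(M^{2-\beta})$, uniform in $N$. Even then you would get the limit $1/\gamma$ only up to an $O(M^{2-\beta})$ defect, not the exact statement of the lemma for each fixed $M$.

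The paper proceeds differently. It replaces the fixed inner radius $M$ by a slowly growing one, $m_N=\sqrt{\log\log N}$, so that Lemma~\ref{lemma:exitAnnulus} (which, via Remark~4.9 of \cite{BirknerDepperschmidtSchlueter2024}, is available for the \emph{joint} pair and not only under $\bP^{\indi}$) gives $\bP_{Nx}(\widehat\tau_{\le m_N}>\widehat\tau_{N^\gamma})\to 1/\gamma$ directly; this handles the lower bound and renders your $\indi$-to-$\joint$ transfer unnecessary. The substantial work is the matching upper bound: one must show that once $\widehat D$ drops below $m_N$ it will, with high probability, drop further to $M$ before ever reaching $N^\gamma$. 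This is done by an iteration through a second intermediate scale $\widetilde m_N=\log N$: a ``corridor'' construction forces $\bP_y(\widehat\tau_{\mathrm{near}}<\widehat\tau_{\widetilde m_N})\ge\delta^{m_N}$ for $\norm{y}\le m_N$, while Lemma~\ref{lemma:exitAnnulus} gives $\bP_{\widetilde m_N}(\widehat\tau_{N^\gamma}<\widehat\tau_{\le m_N})\sim(\log\log N)/(\gamma\log N)$; summing the resulting geometric series yields $\bP_{m_N}(\widehat\tau_{N^\gamma}<\widehat\tau_{\mathrm{near}})\le(\log\log N)/(\delta^{m_N}\gamma\log N)\to 0$. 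Your proposal has no counterpart to this step, and without it the upper bound in \eqref{eq:AH1} is not established.
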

First we show how to apply the lemma to prove the following auxiliary result:
\begin{lemma}
  \label{lem:AH.aux}
  \begin{equation}
    \label{eq:near_limit}
    \bP_{Nx}\big( \widehat{\tau}_{\mathrm{near}} > N^{2\gamma} \big) \xrightarrow[N\to\infty]{} \frac{1}{\gamma}, \qquad \gamma > 1.
  \end{equation}
\end{lemma}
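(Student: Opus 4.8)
\textbf{Proof plan for Lemma~\ref{lem:AH.aux}.}
The plan is to derive \eqref{eq:near_limit} from Lemma~\ref{lem:AHs} by a standard two-sided sandwich argument, using \eqref{eq:AH1} to locate the event $\{\widehat\tau_{\mathrm{near}} > N^{2\gamma}\}$ up to lower-order time corrections and \eqref{eq:AH2} to show that the walkers, once they have separated to distance $N^\gamma$, indeed need of order $N^{2\gamma}$ steps to do so (and cannot come back below $M$ much faster). The heuristic is that on the event in \eqref{eq:AH1} the walkers reach distance $N^\gamma$ before getting near, and by \eqref{eq:AH2} this happens at a time between $N^{2\gamma-\varepsilon}$ and $N^{2\gamma+\varepsilon}$; conversely, on the complementary event they get near before ever reaching $N^\gamma$, and one must argue this happens (with high probability) before time $N^{2\gamma}$.

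First I would fix a small $\varepsilon>0$ and decompose according to whether $\widehat\tau_{\mathrm{near}} > \widehat\tau_{N^\gamma}$ or not. For the lower bound on $\bP_{Nx}(\widehat\tau_{\mathrm{near}} > N^{2\gamma})$: on the event $\{\widehat\tau_{\mathrm{near}} > \widehat\tau_{N^\gamma}\} \cap \{\widehat\tau_{N^\gamma} \ge N^{2\gamma-\varepsilon}\}$ we have in particular $\widehat\tau_{\mathrm{near}} > \widehat\tau_{N^\gamma} \ge N^{2\gamma - \varepsilon}$; but this only gives a bound with $N^{2\gamma-\varepsilon}$, not $N^{2\gamma}$, so I would instead re-run the argument of \eqref{eq:AH1}--\eqref{eq:AH2} with $\gamma$ replaced by a slightly larger exponent, or more cleanly: note that $\widehat\tau_{\mathrm{near}} > N^{2\gamma}$ is implied by the walkers reaching distance $N^{\gamma'}$ (for $\gamma' = \gamma + \varepsilon'$) before getting near, intersected with the event that this takes at least $N^{2\gamma}$ steps, and \eqref{eq:AH2} (applied with exponent $\gamma'$) gives $\widehat\tau_{N^{\gamma'}} \ge N^{2\gamma' - \varepsilon''} \ge N^{2\gamma}$ for $\varepsilon''$ small. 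Taking $\varepsilon' \downarrow 0$ and using continuity of $\gamma \mapsto 1/\gamma$ yields $\liminf_N \bP_{Nx}(\widehat\tau_{\mathrm{near}} > N^{2\gamma}) \ge 1/\gamma$.

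For the upper bound: on the complement $\{\widehat\tau_{\mathrm{near}} \le \widehat\tau_{N^\gamma}\}$ the walkers get near before reaching $N^\gamma$; I would bound the probability that nonetheless $\widehat\tau_{\mathrm{near}} > N^{2\gamma}$ by the probability that the walkers, while confined to distances in $[M, N^\gamma]$, fail to hit $[0,M]$ for $N^{2\gamma}$ steps. Since each regeneration increment has tails controlled by \eqref{eq:regen_time_tail_bounds} with $\beta$ large, and since the coarse-grained difference behaves like a genuine $2d$ random walk when the separation is large (via the coupling \eqref{eq:TVdistance-joint-ind-1step in prop} and the comparison walk $\widehat X''$), the number of regeneration steps needed to hit distance $M$ starting from below $N^\gamma$ is of order at most $(N^\gamma)^2 = N^{2\gamma}$ up to $N^\varepsilon$ corrections — this is essentially the complementary statement to \eqref{eq:AH2}, and I would either invoke it directly or prove it in parallel with \eqref{eq:AH2}. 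Combining, on $\{\widehat\tau_{\mathrm{near}} \le \widehat\tau_{N^\gamma}\}$ we get $\widehat\tau_{\mathrm{near}} \le N^{2\gamma}$ with high probability, so $\limsup_N \bP_{Nx}(\widehat\tau_{\mathrm{near}} > N^{2\gamma}) \le \limsup_N \bP_{Nx}(\widehat\tau_{\mathrm{near}} > \widehat\tau_{N^\gamma}) = 1/\gamma$ by \eqref{eq:AH1}.

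\textbf{Main obstacle.} The delicate point is matching the exponents exactly: \eqref{eq:AH1}--\eqref{eq:AH2} give control only up to $N^{\pm\varepsilon}$ time factors and up to the event of separating to $N^\gamma$, so one has to be careful that replacing $\gamma$ by $\gamma \pm \varepsilon'$ (to absorb the $\varepsilon$-slack) and then letting $\varepsilon' \to 0$ genuinely closes the gap, which works only because $1/\gamma$ is continuous and because the "confined, not yet near" walk really does hit $[0,M]$ within $N^{2\gamma + o(1)}$ steps — the latter requires a lower bound on the diffusivity of $\widehat D$ near scale $M$ through $N^\gamma$, i.e. that the walk does not get stuck, which must be extracted from the separation lemma (Lemma~\ref{lem:abstract separ}) and the random-walk comparison rather than assumed. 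Handling the regime where $\widehat D$ is small (comparable to $M$) is the "black box" alluded to in the paper; here we only need that exiting upward to $N^\gamma$ or hitting $[0,M]$ both happen on the $N^{2\gamma}$ timescale, which the coarse-grained estimates provide.
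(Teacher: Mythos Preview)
Your plan is essentially correct and matches the paper's approach: both halves follow from Lemma~\ref{lem:AHs} via a sandwich argument with $\gamma$ perturbed by $\pm\varepsilon$, then $\varepsilon\downarrow 0$. Your lower bound is exactly the paper's.

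For the upper bound, however, the paper's argument is cleaner than what you sketch and avoids the ``main obstacle'' you flag. Rather than decomposing at level $N^\gamma$ and then needing a separate diffusivity estimate for the walk confined to $[M,N^\gamma]$, the paper decomposes at level $N^{\gamma-\varepsilon}$:
\[
  \{ \widehat{\tau}_{\mathrm{near}}>N^{2\gamma} \}
  = \{ \widehat{\tau}_{\mathrm{near}}>N^{2\gamma} \ge \widehat{\tau}_{N^{\gamma-\varepsilon}} \}
  \;\dot\cup\; \{ \widehat{\tau}_{\mathrm{near}} > N^{2\gamma},\ \widehat{\tau}_{N^{\gamma-\varepsilon}} > N^{2\gamma}\}.
\]
The first piece is bounded by $\bP_{Nx}(\widehat{\tau}_{\mathrm{near}} > \widehat{\tau}_{N^{\gamma-\varepsilon}}) \to 1/(\gamma-\varepsilon)$ via \eqref{eq:AH1}. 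The second piece is bounded by $\bP_{Nx}(\widehat{\tau}_{N^{\gamma-\varepsilon}} > N^{2\gamma})$, and since $N^{2\gamma} = N^{2(\gamma-\varepsilon)+2\varepsilon}$ this goes to $0$ directly by \eqref{eq:AH2} applied with exponent $\gamma-\varepsilon$. Letting $\varepsilon\downarrow 0$ finishes. No appeal to the separation lemma, the coupling \eqref{eq:TVdistance-joint-ind-1step in prop}, or any near-$M$ behaviour is needed here; all of that is already packaged inside Lemma~\ref{lem:AHs}.
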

\begin{proof}[Proof of Lemma~\ref{lem:AH.aux}] 
  We start with the lower bound. Note that we have the inclusion $\{\widehat{\tau}_{\mathrm{near}} > \widehat{\tau}_{N^\gamma}\}\cap \{\widehat{\tau}_{N^\gamma}\ge N^{2\gamma-\varepsilon} \} \subset \{ \widehat{\tau}_{\mathrm{near}} \ge N^{2\gamma-\varepsilon} \}$, and therefore
  \begin{align*}
    \liminf\limits_{N\to\infty}\bP_{Nx}\big( \widehat{\tau}_{\mathrm{near}} \ge N^{2\gamma-\varepsilon}\big) &\ge \liminf\limits_{N\to\infty} \bP_{Nx}\big(\widehat{\tau}_{\mathrm{near}} > \widehat{\tau}_{N^\gamma}, \widehat{\tau}_{N^\gamma}\ge N^{2\gamma-\varepsilon}\big)\\
    &\ge \liminf\limits_{N\to\infty} \bP_{Nx}\big(\widehat{\tau}_{\mathrm{near}} > \widehat{\tau}_{N^\gamma}\big) - \bP_{Nx}\big(\widehat{\tau}_{N^\gamma} < N^{2\gamma-\varepsilon}\big)\\
    &\ge \liminf\limits_{N\to\infty} \bP_{Nx}\big(\widehat{\tau}_{\mathrm{near}} > \widehat{\tau}_{N^\gamma}\big) - \limsup\limits_{N\to\infty}
    \bP_{Nx}\big(\widehat{\tau}_{N^\gamma} < N^{2\gamma-\varepsilon}\big)\\
                                                   &\ge \frac{1}{\gamma}
  \end{align*}
  where the last line holds by \eqref{eq:AH1} and \eqref{eq:AH2}. Setting $\tilde{\gamma}=\gamma-\varepsilon/2$ the last display can be written as
  \begin{equation*}
    \liminf\limits_{N\to\infty}\bP_{Nx}\big( \widehat{\tau}_{\mathrm{near}} \ge N^{2\tilde{\gamma}}\big) \ge \frac{1}{\tilde{\gamma}+\varepsilon/2}.
  \end{equation*}
  Note that, since $\gamma>1$, we can choose $\varepsilon$ small enough such that $\tilde{\gamma}>1$.
  Now, letting $\varepsilon$ go to $0$, we obtain
  \begin{align*}
    \liminf\limits_{N\to\infty}\bP_{Nx}\big( \widehat{\tau}_{\mathrm{near}} > N^{2\tilde{\gamma}}\big) \ge \frac{1}{\tilde{\gamma}},
  \end{align*}
  for every $\tilde{\gamma}>1$.\\
  \medskip\noindent
  Turning to the upper bound we note that
  \begin{equation*}
    \{ \widehat{\tau}_{\mathrm{near}}>N^{2\gamma} \} = \{ \widehat{\tau}_{\mathrm{near}}>N^{2\gamma} \ge \tau_{N^{\gamma-\varepsilon}} \} \dot{\cup} \{ \widehat{\tau}_{\mathrm{near}}, \widehat{\tau}_{N^{\gamma-\varepsilon}} > N^{2\gamma}\}.
  \end{equation*}
  For the second set on the right hand side we see
  \begin{equation*}
    \bP_{Nx}\big( \widehat{\tau}_{\mathrm{near}}, \widehat{\tau}_{N^{\gamma-\varepsilon}} > N^{2\gamma} \big) \le  \bP_{Nx}\big( \widehat{\tau}_{N^{\gamma-\varepsilon}}  > N^{2\gamma}\big) \xrightarrow[N\to\infty]{} 0
  \end{equation*}
  by \eqref{eq:AH2}. For the first set we see
  \begin{align*}
    \bP_{Nx}\big( \widehat{\tau}_{\mathrm{near}}>N^{2\gamma} \ge \tau_{N^{\gamma-\varepsilon}} \big) \le \bP_{Nx}\big( \widehat{\tau}_{\mathrm{near}} > \widehat{\tau}_{N^{\gamma-\varepsilon}} \big) \xrightarrow[N\to \infty]{} \frac{1}{\gamma-\varepsilon}
  \end{align*}
  by \eqref{eq:AH1}. Combining the above we obtain
  \begin{equation*}
    \limsup\limits_{N\to\infty} \bP_{Nx}\big( \widehat{\tau}_{\mathrm{near}} > N^{2\gamma} \big) \le \frac{1}{\gamma-\varepsilon}
  \end{equation*}
  and consequently
  \begin{equation*}
    \limsup\limits_{N\to\infty} \bP_{Nx}\big( \widehat{\tau}_{\mathrm{near}} > N^{2\gamma} \big) \le \frac{1}{\gamma},
  \end{equation*}
  which concludes the proof of \eqref{eq:near_limit}.
\end{proof}
Now we prove the two equations from Lemma~\ref{lem:AHs}. 
\begin{proof}[Proof of Lemma~\ref{lem:AHs}]
  We start with proving \eqref{eq:AH1}.
  Define the stopping time $\widehat{\tau}_{\le m_N} \coloneqq \inf\{k\colon \norm{\widehat{D}_k} \le m_N \}$, where $m_N=\sqrt{\log\log N}$ and the sets $A_N\coloneqq \{ \widehat{\tau}_{\le m_N}  > \widehat{\tau}_{N^\gamma} \}$ and $B_N\coloneqq \{ \widehat{\tau}_{\mathrm{near}} > \widehat{\tau}_{N^\gamma} \}$. By definition we have $A_N\subset B_N$ and thus by Lemma~\ref{lemma:exitAnnulus}
  \begin{equation}
    \label{eq:lower_bound_for_AH1}
    \liminf\limits_{N\to\infty} \bP_{Nx}\big( \widehat{\tau}_{\mathrm{near}} > \widehat{\tau}_{N^\gamma} \big) \ge \liminf\limits_{N\to\infty} \bP_{Nx}\big( \widehat{\tau}_{\le m_N}  > \widehat{\tau}_{N^\gamma} \big) = \frac{1}{\gamma}.
  \end{equation}
  To prove the upper bound it is sufficient to show that
  \begin{align}
    \label{eq:1.6}
    \bP_{Nx}(B_N\cap A^\compl_N) =\bP_{Nx}(\widehat{\tau}_{\le m_N} < \widehat{\tau}_{N^\gamma} < \widehat{\tau}_{\mathrm{near}} ) \xrightarrow{} 0.
  \end{align}

  We provide in the following a somewhat rough sketch of the argument for \eqref{eq:1.6}, where we pretend that $\widehat{D} = \widehat{X}-\widehat{X}'$ has ``continuous'' paths in the sense that we use $\widehat{D}_{\widehat{\tau}_R}=R$ and $\widehat{D}_{\widehat{\tau}_{\le m_N}} = m_N$ in the computations. In the full proof, we control the error incurred by over-/undershoots at stopping times explicitly. It is however a little cumbersome and we relegate the details to Appendix~\ref{sect:proof of 1.6}.

  Note that, due to the Markov-property the left-hand side is roughly bounded by $\bP_{m_N}(\widehat{\tau}_{N^\gamma} < \widehat{\tau}_{\mathrm{near}})$. We then use a similar idea to the separation lemma, that is Lemma~\ref{lem:abstract separ}, and introduce a \glqq bridge state\grqq\ at $\wt{m}_N = \log N$ between $m_N$ and $N^\gamma$. Every time the random walks are at distance $m_N$ they have a positive chance, still depending on $N$, to reach a constant distance $M$. Here we can construct corridors, similar to the separation lemma, that enforce a positive probability to reach the distance $M$ within $m_N$ steps. Therefore there exist a positive constant $\delta>0$ such that the distance $M$ can be reached from the distance $m_N$ within $m_N$ steps with probability at least $\delta^{m_N}$. Whereas, for the other distances we obtain, using Lemma~\ref{lemma:exitAnnulus},
  \begin{align}
    \label{eq:5}
    \bP_{\wt{m}_N} ( \widehat{\tau}_{\le m_N} > \widehat{\tau}_{N^\gamma} ) \sim \frac{\log \wt{m}_N - \log m_N}{\log N^\gamma - \log m_N} \sim \frac{\log \log N}{\gamma \log N}.
  \end{align}
  We can write
  \begin{equation*}
    \bP_{m_N}(\widehat{\tau}_{N^\gamma} < \widehat{\tau}_{\mathrm{near}}) =     \bP_{m_N}(\widehat{\tau}_{\wt{m}_N}<\widehat{\tau}_{\mathrm{near}}) \Big(\bP_{\wt{m}_N}( \widehat{\tau}_{N^\gamma} < \widehat{\tau}_{\le m_N} ) + \bP_{\wt{m}_N}( \widehat{\tau}_{\le m_N}<\widehat{\tau}_{N^\gamma} < \widehat{\tau}_{\mathrm{near}} )  \Big).
  \end{equation*}
  and observe $\bP_{m_N}(\widehat{\tau}_{\wt{m}_N}<\widehat{\tau}_{\mathrm{near}}) \le 1-\delta^{m_N}$. We split the attempts to hit the distance $N^\gamma$ by how often the distance is $\wt{m}_N$ before eventually hitting $N^\gamma$. Using this we obtain a sum over the number of failed attempts to reach a constant distance of $M$ starting from $m_N$ as an upper bound, i.e.\
  \begin{align}
    \label{eq:6}
    \begin{split}
      \bP_{m_N}(\widehat{\tau}_{N^\gamma} < \widehat{\tau}_{\mathrm{near}})
      &\le (1-\delta^{m_N}) \bP_{\wt{m}_N}( \widehat{\tau}_{N^\gamma} < \widehat{\tau}_{\le m_N} ) +(1-\delta^{m_N}) \bP_{\wt{m}_N}( \widehat{\tau}_{\le m_N}<\widehat{\tau}_{N^\gamma} < \widehat{\tau}_{\mathrm{near}} )\\
      &\le (1-\delta^{m_N}) \frac{\log\log N}{\gamma \log N} + (1-\delta^{m_N})\bP_{m_N}(\widehat{\tau}_{N^\gamma} < \widehat{\tau}_{\mathrm{near}})\\
      &\le \sum_{k=1} (1-\delta^{m_N})^k \frac{\log \log N}{\gamma \log N}\\
      &\le \frac{\log\log N}{\delta^{m_N}\gamma\log N}.
    \end{split}
  \end{align}
  Note that by the choice of $m_N$ we have
  \begin{equation*}
    \frac{\log\log N}{\delta^{m_N}\gamma\log N} \xrightarrow[N\to \infty]{}0.
  \end{equation*}
  Thus
  \begin{align}
    \lim\limits_{N\to\infty} \bP_{Nx}(\widehat{\tau}_{\mathrm{near}}> \widehat{\tau}_{N^\gamma}) = \frac{1}{\gamma}.
  \end{align}
  See Appendix~\ref{sect:proof of 1.6} for more details.
  \medskip

  For \eqref{eq:AH2} we first observe that, by the separation lemma,
  \begin{align*}
    \bP_{Nx}(\widehat{\tau}_{N^\gamma} \ge N^{2\gamma + \varepsilon}) \le \exp(-C N^{c})
  \end{align*}
  for some constants $C,c>0$.

  For the lower bound, we again give here a rough argument pretending that paths are ``continuous''
  and refer to Appendix~\ref{sect:lb AH2} for more details.
  Since  $N<N^\gamma/2$ for $N$ large enough, we obtain
  \begin{align*}
    \bP_{Nx}(\widehat{\tau}_{N^\gamma} \le N^{2\gamma-\varepsilon}) \le \bP_{N^{\gamma}/2} (\widehat{\tau}_{N^\gamma} \le N^{2\gamma-\varepsilon}).
  \end{align*}
  Furthermore let $\tau_{\mathrm{exit}} \coloneqq \inf\{k\ge 0\colon \norm{\widehat{D}_k} \notin B_{N^\gamma/4}(N^\gamma/2) \}$, i.e.\ the exit time of the ball with radius $N^\gamma/4$ centred at $N^\gamma/2$. This yields
  \begin{align*}
    \bP_{N^{\gamma}/2}(\widehat{\tau}_{N^\gamma} \le N^{2\gamma-\varepsilon}) \le \bP_{N^{\gamma/2}}(\tau_{\mathrm{exit}} \le N^{2\gamma-\varepsilon}).
  \end{align*}
  It remains to analyze the probability on the right hand side of the last display. To that end, note that $\norm{\widehat{D}_k}\ge N^\gamma/4$ for all $k\le \tau_{\mathrm{exit}}$ if $\norm{\widehat{D}_0} \in B_{N^\gamma/4}(N^\gamma/2)$ and we can couple the $\joint$-pair, that is where both random walkers move in the same environment, with a pair of random walkers that evolve in independent copies of the environment, denoted by $\widehat{X}^\indi$ and $\widehat{X}^{'\indi}$. Respectively we denote their difference process by $\widehat{D}^\indi$ and obtain, by using equation~\eqref{eq:TVdistance-joint-ind-1step in prop},
  \begin{align*}
    \bP_{N^{\gamma/2}}(\tau_{\mathrm{exit}} \le N^{2\gamma-\varepsilon}) \le CN^{2\gamma-\varepsilon}N^{-\beta\gamma} + \bP_{N^{\gamma/2}}(\tau^\indi_{\mathrm{exit}} \le N^{2\gamma-\varepsilon}).
  \end{align*}
  By tuning the parameters of the regeneration construction developed in \cite{BirknerDepperschmidtSchlueter2024} correctly we can choose $\beta$ arbitrarily large and, therefore, the first term on the right hand side vanishes in the limit. For the $\indi$-pair we show in \cite{BirknerDepperschmidtSchlueter2024} that a functional CLT holds and thus for any $K>0$
  \begin{align*}
    \bP_{N^{\gamma}/2}(\tau^\indi_{\mathrm{exit}} \le N^{2\gamma-\varepsilon} )\le \bP_{N^{\gamma}/2}\Big( \tau^\indi_{\mathrm{exit}} \le \frac{N^{2\gamma}}{K} \Big) \xrightarrow[N\to \infty]{} \bP_0\Big(\inf\{t\colon \norm{B_t} > 1/4 \} \le \frac{1}{K} \Big).
  \end{align*}
  Thus, by taking $K$ to infinity on both sides in the above display,
  \begin{equation*}
    \limsup\limits_{N\to\infty}\bP_{N^{\gamma/2}}(\tau^\indi_{\mathrm{exit}} \le N^{2\gamma-\varepsilon}) \xrightarrow[N\to\infty]{} 0,
  \end{equation*}
  and
  \begin{equation*}
    \limsup\limits_{N\to\infty}\bP_{Nx}(\widehat{\tau}_{N^\gamma} \le N^{2\gamma-\varepsilon}) =0.
  \end{equation*}
  Finally we conclude that \eqref{eq:AH2} holds.
\end{proof}

Since we aim to study the meeting time of the two original random walks and not just their coarse-grained versions, we need to establish some control of the behaviour between the regeneration times. Our strategy is to observe along the coarse-grained versions along their simultaneous regeneration times and wait for those to hit the same box of some constant size, before we estimate the probability for the two original random walks to meet when started in the same box. Therefore we want to avoid a scenario where the original random walks are near each other and possibly meet before the coarse-grained versions along their regeneration times hit the same box.

Thus, to prove Proposition~\ref{prop:limi_meeting_time}, we need some estimates that allow us to swap from the coarse-grained walkers to the original ones. A lot follows along the same ideas we used above.
\begin{lemma}
  \label{lem:limit_with_regen_times}
  For $x\in\bR^2 \setminus \{ 0 \}$ and $\gamma>1$ the following limit holds
  \begin{equation}
    \label{eq:limit_with_regen_times}
    \lim\limits_{N\to \infty} \bP_{Nx}\Big( \Lt \sum_{i=1}^{\widehat{\tau}_{\mathrm{near}}} \big( \wt{R}_i-\wt{R}_{i-1} \big)>N^{2\gamma} \Big) = \frac{1}{\gamma}.
  \end{equation}
\end{lemma}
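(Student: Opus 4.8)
The plan is to first rewrite the quantity of interest. Since $\wt{R}_0 = 0$, telescoping gives $\Lt \sum_{i=1}^{\widehat{\tau}_{\mathrm{near}}}(\wt{R}_i - \wt{R}_{i-1}) = \Lt \wt{R}_{\widehat{\tau}_{\mathrm{near}}}$, so \eqref{eq:limit_with_regen_times} is the assertion that $\bP_{Nx}\big(\Lt \wt{R}_{\widehat{\tau}_{\mathrm{near}}} > N^{2\gamma}\big) \to 1/\gamma$. The two ingredients will be Lemma~\ref{lem:AH.aux}, which (applied with $\gamma$ replaced by an arbitrary $\gamma'>1$) yields $\bP_{Nx}(\widehat{\tau}_{\mathrm{near}} > N^{2\gamma'}) \to 1/\gamma'$, and the regeneration tail bound \eqref{eq:regen_time_tail_bounds}, in which the exponent $\beta$ may be taken arbitrarily large. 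Heuristically $\wt{R}_n$ is of order $n$, so $\Lt \wt{R}_{\widehat{\tau}_{\mathrm{near}}}$ and $\widehat{\tau}_{\mathrm{near}}$ agree up to a multiplicative constant; since the limit we are after only sees the exponent of $N$ and not such constants, it should coincide with the limit of $\bP_{Nx}(\widehat{\tau}_{\mathrm{near}} > N^{2\gamma})$ obtained in Lemma~\ref{lem:AH.aux}.

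For the lower bound I would only use the trivial inequality $\wt{R}_i - \wt{R}_{i-1} \ge 1$, which gives $\wt{R}_{\widehat{\tau}_{\mathrm{near}}} \ge \widehat{\tau}_{\mathrm{near}}$ and hence, using $\Lt \ge 1$, $\bP_{Nx}(\Lt \wt{R}_{\widehat{\tau}_{\mathrm{near}}} > N^{2\gamma}) \ge \bP_{Nx}(\widehat{\tau}_{\mathrm{near}} > N^{2\gamma}/\Lt) \ge \bP_{Nx}(\widehat{\tau}_{\mathrm{near}} > N^{2\gamma})$, which tends to $1/\gamma$ by Lemma~\ref{lem:AH.aux}. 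Thus $\liminf_N \bP_{Nx}(\Lt \wt{R}_{\widehat{\tau}_{\mathrm{near}}} > N^{2\gamma}) \ge 1/\gamma$.

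For the upper bound I would fix $\gamma' \in (1,\gamma)$ and split according to whether $\widehat{\tau}_{\mathrm{near}} > N^{2\gamma'}$. On this event the probability tends to $1/\gamma'$ by Lemma~\ref{lem:AH.aux}. On the complementary event $\{\widehat{\tau}_{\mathrm{near}} \le N^{2\gamma'}\}$, monotonicity of $i \mapsto \wt{R}_i$ gives $\wt{R}_{\widehat{\tau}_{\mathrm{near}}} \le \wt{R}_{\lceil N^{2\gamma'}\rceil}$, so it suffices to show $\bP_{Nx}\big(\wt{R}_{\lceil N^{2\gamma'}\rceil} > N^{2\gamma}/\Lt\big) \to 0$. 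Writing $m = \lceil N^{2\gamma'}\rceil$ and applying \eqref{eq:regen_time_tail_bounds} through a union bound over $i \le m$, for any $\varepsilon'>0$ one has $\bP_{Nx}\big(\max_{i \le m}(\wt{R}_i - \wt{R}_{i-1}) > m^{\varepsilon'}\big) \le C m^{1-\beta\varepsilon'} \to 0$ as $m \to \infty$, provided $\beta > 1/\varepsilon'$; on the complement $\wt{R}_m \le m^{1+\varepsilon'}$. Choosing $\varepsilon'$ small enough that $2\gamma'(1+\varepsilon') < 2\gamma$ — which is possible because $\gamma' < \gamma$ — one gets $m^{1+\varepsilon'} = o(N^{2\gamma}/\Lt)$, so the probability above tends to $0$. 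Hence $\limsup_N \bP_{Nx}(\Lt \wt{R}_{\widehat{\tau}_{\mathrm{near}}} > N^{2\gamma}) \le 1/\gamma'$, and letting $\gamma' \uparrow \gamma$, combined with the lower bound, completes the argument. (One tacitly uses $\widehat{\tau}_{\mathrm{near}} < \infty$ $\bP_{Nx}$-a.s., which holds in $d=2$; if not, one reads $\wt{R}_\infty = \infty$ and nothing changes.)

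The only genuine obstacle is that $\widehat{\tau}_{\mathrm{near}}$ is a random index and the regeneration increments are not i.i.d., so one cannot simply invoke a law of large numbers for $\wt{R}_{\widehat{\tau}_{\mathrm{near}}}$, nor a naive concentration estimate. The device that resolves this is the split at $\{\widehat{\tau}_{\mathrm{near}} \le N^{2\gamma'}\}$ with $\gamma'<\gamma$, together with the monotonicity $\wt{R}_{\widehat{\tau}_{\mathrm{near}}} \le \wt{R}_{\lceil N^{2\gamma'}\rceil}$: it replaces the random index by a deterministic one and, crucially, opens up the polynomial gap between $N^{2\gamma'(1+\varepsilon')}$ and $N^{2\gamma}$ that the crude union-bound control of $\wt{R}_m$ from \eqref{eq:regen_time_tail_bounds} can exploit, while the insensitivity of the target limit $1/\gamma$ to the multiplicative constant $\Lt$ takes care of the rest.
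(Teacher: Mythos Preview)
Your proof is correct and follows essentially the same strategy as the paper: the lower bound via $\wt{R}_i - \wt{R}_{i-1} \ge 1$ is identical, and the upper bound is obtained by splitting according to the size of $\widehat{\tau}_{\mathrm{near}}$, replacing the random index by a deterministic one, and controlling the resulting sum using the tail bound \eqref{eq:regen_time_tail_bounds}. The only difference is in the technical execution of the upper bound: the paper splits at the threshold $\varepsilon N^{2\gamma}$ and applies Markov's inequality directly to the sum (using only that the increments have uniformly bounded mean), yielding $\bP_{Nx}\big(\Lt\sum_{i=1}^{\varepsilon N^{2\gamma}}(\wt{R}_i-\wt{R}_{i-1})>N^{2\gamma}\big)\le C\Lt\varepsilon$, then lets $\varepsilon\downarrow 0$; you instead split at $N^{2\gamma'}$ with $\gamma'<\gamma$ and control the maximum increment via a union bound, which requires the stronger input that $\beta$ can be taken large. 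Both work here, but the paper's Markov-inequality route is slightly more economical in its assumptions.
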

\begin{proof}
  We start with the lower bound.
  \begin{align*}
    \bP_{Nx}\Big( \Lt\big( \sum_{i=1}^{\widehat{\tau}_{\mathrm{near}}} \wt{R}_i-\wt{R}_{i-1} \big)>N^{2\gamma} \Big) \ge \bP_{Nx}\big( \widehat{\tau}_{\mathrm{near}}\Lt > N^{2\gamma} \big) \xrightarrow[N\to\infty]{} \frac{1}{\gamma},
  \end{align*}
  by equation~\eqref{eq:near_limit}, where $(\wt{R}_i)_i$ are the simultaneous regeneration times for the coarse-grained pair of random walkers (note that the pre-factor $\Lt$, which is not present in \eqref{eq:near_limit}, is irrelevant in the limit).

  Proving the upper bound essentially relies on the tail bounds proved for the simultaneous regeneration times $(\wt{R}_i)_i$. Let $\varepsilon>0$,
  \begin{align*}
    \bP_{Nx}\Big( \Lt \sum_{i=1}^{\widehat{\tau}_{\mathrm{near}}} \wt{R}_i-\wt{R}_{i-1} > N^{2\gamma}\Big)&= \bP_{Nx}\Big( \Lt \sum_{i=1}^{\widehat{\tau}_{\mathrm{near}}} \wt{R}_i-\wt{R}_{i-1} > N^{2\gamma}, \widehat{\tau}_{\mathrm{near}}\le \varepsilon N^{2\gamma} \Big)\\
    &\hspace{1.5cm}+\bP_{Nx}\Big( \Lt \sum_{i=1}^{\widehat{\tau}_{\mathrm{near}}} \wt{R}_i-\wt{R}_{i-1} > N^{2\gamma}, \widehat{\tau}_{\mathrm{near}}> \varepsilon N^{2\gamma} \Big)\\
    &\le \bP_{Nx}\Big( \Lt \sum_{i=1}^{\varepsilon N^{2\gamma}} \wt{R}_i-\wt{R}_{i-1} > N^{2\gamma} \Big) + \bP_{Nx}\Big( \widehat{\tau}_{\mathrm{near}} > \varepsilon N^{2\gamma} \Big).
  \end{align*}

  Using Markov's inequality the first term in the last line can be bounded from above by
  \begin{align}
    \begin{split}
      \bP_{Nx}\Big( \Lt \sum_{i=1}^{\varepsilon N^{2\gamma}} \wt{R}_i-\wt{R}_{i-1} > N^{2\gamma} \Big)
      & \le \frac{\Lt}{N^{2\gamma}} \sum_{i=1}^{\varepsilon N^{2\gamma}} \bE_{Nx}[\wt{R}_i-\wt{R}_{i-1}]
      \le C \Lt \varepsilon
    \end{split}
  \end{align}
  with some fixed constant $C < \infty$ by \eqref{eq:regen_time_tail_bounds}.
  Thus
  \begin{align}
    \label{eq:1.13}
    \begin{split}
      \limsup_{N\to\infty} \bP_{Nx}\Big( \Lt \sum_{i=1}^{\widehat{\tau}_{\mathrm{near}}} \wt{R}_i-\wt{R}_{i-1} > N^{2\gamma}\Big)
      \le C \Lt \varepsilon + \limsup_{N\to\infty} \bP_{Nx}\Big( \widehat{\tau}_{\mathrm{near}} > \varepsilon N^{2\gamma} \Big)
      \le C \Lt \varepsilon + \frac{1}{\gamma}
    \end{split}
  \end{align}
  by Lemma~\ref{lem:AH.aux} (again, the prefactor $\varepsilon$ in the second term on
  the right-hand side, which is not present in \eqref{eq:near_limit},
  plays no role in the limit). Taking $\varepsilon \downarrow 0$ in \eqref{eq:1.13} yields the
  required upper bound.
\end{proof}

\begin{lemma}
  \label{lem:Greenfct.annulus}
  There are constants $0 < C_1, C_2 < \infty$ such that
  \begin{equation}
    \bE_{Nx}\bigg[ \sum_{i=1}^{\widehat{\tau}_{N^\gamma} \wedge \widehat{\tau}_{\mathrm{near}} - 1} \norm{\widehat{D}_i}_2^{-\beta} \bigg]
    \le C_1 M^{2-\beta} - C_2 \norm{Nx}_2^{2-\beta}
  \end{equation}
  whenever $M$ is large enough (recall from \eqref{def:tau_near} that
  $M$ also enters the definition of $\widehat{\tau}_{\mathrm{near}}$).
\end{lemma}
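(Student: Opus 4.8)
The plan is to read the left-hand side as a killed Green's function for the coarse-grained difference chain $\widehat{D}=\widehat{X}-\widehat{X}'$ --- which, observed along the joint regeneration times, is a Markov chain with spatially homogeneous one-step transitions --- absorbed upon first leaving the annulus $\{y\in\bZ^2:M\le\norm{y}_2\le N^\gamma\}$, and to bound it by producing a nonnegative supersolution. Concretely I would look for $g\colon\bZ^2\to[0,\infty)$ with (i) $g$ bounded, $\norm{g}_\infty\le C_1M^{2-\beta}$; (ii) $g(Nx)\le C_1M^{2-\beta}-C_2\norm{Nx}_2^{2-\beta}$; and (iii) $\mathcal{L}g(y):=\bE_y[g(\widehat{D}_1)]-g(y)\le-\norm{y}_2^{-\beta}$ for all $y$ with $M<\norm{y}_2<N^\gamma$, where $\mathcal{L}$ is the one-step generator of $\widehat{D}$ and the subscript records the initial separation. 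Granted such a $g$: with $\tau=\widehat{\tau}_{N^\gamma}\wedge\widehat{\tau}_{\mathrm{near}}$, the process $g(\widehat{D}_{k\wedge\tau})+\sum_{i=0}^{(k\wedge\tau)-1}\norm{\widehat{D}_i}_2^{-\beta}$ is a supermartingale (for $i<\tau$ the state $\widehat{D}_i$ lies in the open annulus, hence $\mathcal{L}g(\widehat{D}_i)+\norm{\widehat{D}_i}_2^{-\beta}\le0$), so by $g\ge0$, boundedness of $g$, monotone convergence and optional stopping, $\bE_{Nx}\bigl[\sum_{i=1}^{\tau-1}\norm{\widehat{D}_i}_2^{-\beta}\bigr]\le g(Nx)\le C_1M^{2-\beta}-C_2\norm{Nx}_2^{2-\beta}$.

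For the candidate $g$ I would take a truncated and capped version of $y\mapsto a\bigl(\rho^{2-\beta}-\norm{y}_\Sigma^{2-\beta}\bigr)$, where $\norm{y}_\Sigma:=(y^\top\Sigma^{-1}y)^{1/2}$, $\Sigma$ is the nondegenerate limiting increment covariance of $\widehat{D}^{\indi}$ furnished by the quenched CLT of \cite{BirknerDepperschmidtSchlueter2024}, $\rho\asymp M$, and $a>0$ a normalisation; concretely $g$ equals this formula on $\{\norm{y}_2\ge\tfrac34 M\}$, is tapered down smoothly to $0$ on $\{\tfrac12 M\le\norm{y}_2\le\tfrac34 M\}$ and is $0$ on $B_{M/2}$. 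The point of this choice is the identity $\tfrac12\langle\Sigma,\nabla^2(\norm{y}_\Sigma^{2-\beta})\rangle=\tfrac12(\beta-2)^2\norm{y}_\Sigma^{-\beta}$, which is of order $\norm{y}_2^{-\beta}$ and of the right sign: $-\norm{\cdot}_\Sigma^{2-\beta}$ is ``almost superharmonic'' for the diffusion with generator $\tfrac12\langle\Sigma,\nabla^2\cdot\rangle$. Since $2-\beta<0$, $\norm{\cdot}_\Sigma^{2-\beta}$ is nonnegative and decreasing along rays, so $g\ge0$ and $\norm{g}_\infty\asymp M^{2-\beta}$, which is (i); and $g(Nx)=a\rho^{2-\beta}-a\norm{Nx}_\Sigma^{2-\beta}=a\rho^{2-\beta}-aN^{2-\beta}\norm{x}_\Sigma^{2-\beta}$, which gives (ii) with the equivalence $\norm{x}_\Sigma^{2-\beta}\asymp\norm{x}_2^{2-\beta}$. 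On $B_M$ and outside $B_{N^\gamma}$, (iii) is not needed, these being the absorbing regions; note also that the taper keeps $g$ twice differentiable and of order $M^{2-\beta}$ on a full neighbourhood of $\{\norm{\cdot}_2\ge M\}$, so there is no boundary layer at $\partial B_M$.

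Verifying (iii) is the substantive step. I would first expand $\bE^{\indi}_y[g(\widehat{D}^{\indi}_1)]-g(y)$, where $\widehat{D}^{\indi}$ is the difference of the \emph{independent} pair, a genuine centred random walk with finite moments of all orders below $\beta$ by the tail bound \eqref{eq:regen_time_tail_bounds}. Over the typical (small) one-step increments the first-order contribution vanishes by centring and the leading term is $\tfrac12\langle\Sigma,\nabla^2 g(y)\rangle=-\tfrac{a}{2}(\beta-2)^2\norm{y}_\Sigma^{-\beta}\le-c_0\norm{y}_2^{-\beta}$; the Taylor remainder is $O(\norm{y}_2^{-\beta-1})$, and the contribution of atypically large single increments is controlled by a negative power of $\norm{y}_2$ (for most such increments $g$ actually decreases, as $g$ is capped and monotone in $\norm{\cdot}_\Sigma$; only jumps to a strictly larger $\norm{\cdot}_\Sigma$ contribute positively, with probability $\lesssim\norm{y}_2^{-\beta''}$ for $\beta''$ as large as we like below $\beta$) --- all of this again via \eqref{eq:regen_time_tail_bounds}, and all of lower order relative to $\norm{y}_2^{-\beta}$ once $M$ is large. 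Finally I would pass from $\widehat{D}^{\indi}$ to $\widehat{D}$: by the one-step coupling estimate \eqref{eq:TVdistance-joint-ind-1step in prop} the laws of $\widehat{D}_1$ and $\widehat{D}^{\indi}_1$ started at separation $\norm{y}_2$ are within total variation $C\norm{y}_2^{-\beta}$, so $\bigl|\mathcal{L}g(y)-\bigl(\bE^{\indi}_y[g(\widehat{D}^{\indi}_1)]-g(y)\bigr)\bigr|\le2\norm{g}_\infty C\norm{y}_2^{-\beta}\le2C_1C\,M^{2-\beta}\norm{y}_2^{-\beta}$, which for $M$ large is $\le\tfrac12 c_0\norm{y}_2^{-\beta}$ and gets absorbed; rescaling $g$ by $1/c_0$ then turns the estimate into $\mathcal{L}g(y)\le-\norm{y}_2^{-\beta}$ (and rescales $C_1,C_2$).

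I expect the verification of (iii) to be the main obstacle, and the two places where care is needed --- and where the hypothesis ``$M$ large enough'' is used --- are: first, $\widehat{D}$ is not a random walk, so its generator cannot be differentiated directly and everything must be routed through $\widehat{D}^{\indi}$ and \eqref{eq:TVdistance-joint-ind-1step in prop}, whose error $\norm{g}_\infty\cdot\norm{y}_2^{-\beta}\asymp M^{2-\beta}\norm{y}_2^{-\beta}$ has to be beaten by the $-c_0\norm{y}_2^{-\beta}$ coming from the second-order term; and second, the behaviour near the inner sphere, where the continuum heuristic behind the Taylor step is weakest and where, in the construction above, it is handled by giving the taper enough room so that $g$ stays smooth and of order $M^{2-\beta}$ there --- alternatively one could drop (iii) on $\{M<\norm{y}_2<2M\}$ and instead bound the expected number of steps the chain spends in that shell before absorption by $O(M^2)$ using the functional CLT of \cite{BirknerDepperschmidtSchlueter2024}. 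A last, minor nuisance is that $\Sigma$ need not be a scalar matrix, which is why one works with the adapted form $\norm{\cdot}_\Sigma$ rather than with $\norm{\cdot}_2^{2-\beta}$; the mismatch between the Euclidean ball $B_M$ and the level sets of $\norm{\cdot}_\Sigma$ only affects the constants $C_1,C_2$ and the precise shape of the taper region.
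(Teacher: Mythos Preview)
Your proposal follows essentially the same Lyapunov-function/supersolution approach as the paper: find a nonnegative bounded $g$ with $\mathcal{L}g \le -c\norm{\cdot}_2^{-\beta}$ on the annulus, then conclude by optional stopping. The paper's ansatz is $f(x) = 2^{\beta-2}M^{2-\beta} - \norm{x}_2^{2-\beta}\indset{[M/2,\infty)}(\norm{x}_2)$ with the Euclidean norm and a hard cutoff, and it likewise routes the verification of the generator inequality through $\Psi^{\indi}$ via Taylor expansion and the total variation bound \eqref{eq:TVdistance-joint-ind-1step in prop}, exactly as you outline.

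The one genuine difference is your use of the adapted norm $\norm{y}_\Sigma = (y^\top\Sigma^{-1}y)^{1/2}$ in place of $\norm{y}_2$. This is an actual improvement: with the Euclidean norm the second-order Taylor term does not automatically have the right sign when the increment covariance is anisotropic, and the paper must impose the extra condition $\beta > 2\widehat{\lambda}_1/\widehat{\lambda}_2$ on the eigenvalue ratio (see \eqref{eq:ev.covmat} and Remark~\ref{rem:reasonably anisotropic}), arguing separately that it can be arranged by tuning the model parameters. Your identity $\tfrac12\langle\Sigma,\nabla^2(\norm{y}_\Sigma^{2-\beta})\rangle = \tfrac12(\beta-2)^2\norm{y}_\Sigma^{-\beta}$ removes this constraint and works for any $\beta>2$. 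The smooth taper versus the paper's hard indicator at $M/2$ is cosmetic: in both cases the Taylor expansion is performed only for $\norm{y}_2\ge M$, with jumps into the inner region absorbed into the large-increment error term controlled by \eqref{eq:regen_time_tail_bounds}.
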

\begin{proof} We wish to find a function $f : \Z^2 \to [0,\infty)$
  and a constant $c>0$ such that
  \begin{align}
    \label{eq:Poissonproblemannulus}
    \bE\big[ f(\widehat{D}_{i+1})-f(\widehat{D}_i) \,\big|\, \widehat{\mathcal{F}}_i \big]
    \le -c \norm{\widehat{D}_i}_2^{-\beta}
    \quad \text{ on the event } M < \norm{\widehat{D}_i}_2 < N^{2\gamma}
  \end{align}
  and $f(y) \ge 0$ when $\norm{y} \le M$ or $\norm{y} \ge N^{2\gamma}$.
  
  Given $f$ satisfying \eqref{eq:Poissonproblemannulus}, writing
  $\widehat{\tau} := \widehat{\tau}_{N^\gamma} \wedge
  \widehat{\tau}_{\mathrm{near}}$, we have for
  $M < \norm{y} < N^{2\gamma}$
    \begin{align}
    0
    & \le \bE_y\big[ f(\widehat{D}_{\widehat{\tau}})\big]
      = f(y) + \bE_y\bigg[ \sum_{i=0}^{\widehat{\tau}-1} \Big( f(\widehat{D}_{i+1})-f(\widehat{D}_i) \Big) \bigg]
      \notag \\
    &
      = f(y) + \sum_{i=0}^\infty \bE_y\bigg[ \ind{i < \widehat{\tau}} \Big( f(\widehat{D}_{i+1})-f(\widehat{D}_i) \Big) \bigg]
      \notag \\
    & = f(y) + \sum_{i=0}^\infty \bE_y\bigg[ \ind{i < \widehat{\tau}}
      \bE_y\Big[ f(\widehat{D}_{i+1})-f(\widehat{D}_i) \,\Big|\, \widehat{\mathcal{F}}_i \Big] \bigg]
         \le f(y) - c \sum_{i=0}^\infty \bE_y\Big[ \ind{i < \widehat{\tau}} \norm{\widehat{D}_i}_2^{-\beta} \Big] ,
  \end{align}
  hence
  \begin{align}
    \bE_{y}\bigg[ \sum_{i=1}^{\widehat{\tau}_{N^\gamma} \wedge \widehat{\tau}_{\mathrm{near}} - 1} \norm{\widehat{D}_i}_2^{-\beta} \bigg]
    \le \frac{1}{c} f(y) .
  \end{align}

  Fairly straightforward, though somewhat lengthy computations show that
  the ansatz function
  \begin{equation}
    \label{eq:Poissonproblemannulus.ansatz}
    f(x) = 2^{\beta-2} M^{2-\beta} - \norm{x}_2^{2-\beta}
    \indset{[M/2,\infty)}(\norm{x}_2), \quad x \in \Z^2
  \end{equation}
  (which is inspired by the corresponding Green function of
  $2$-dimensional Brownian motion in an annulus, see
  Remark~\ref{rem:BMGreenfct.annulus}) does satisfy
  \eqref{eq:Poissonproblemannulus} with a suitable choice of $c>0$.
  For details of the computation see
  Section~\ref{sect:Greenfct.annulus}.
\end{proof}

\begin{proof}[Proof of Proposition~\ref{prop:limi_meeting_time}]
  We first establish a lower bound on the probability on the left-hand side of \eqref{eq:limit_meeting_time}.
  Note that by construction, when $c_0>0$ is sufficiently small,
  \begin{equation}
    \begin{split}
    & \{ N^{2\gamma-\varepsilon} \le \widehat{\tau}_{N^\gamma} \le N^{2\gamma+\varepsilon} \} \cap
    \{ \widehat{\tau}_{N^\gamma} < \widehat{\tau}_{\mathrm{near}} \}
    \cap \{ \wt{R}_{i+1} - \wt{R}_i < c_0 \norm{\widehat{D}_i} \text{ for all } 0 \le i < \widehat{\tau}_{N^\gamma} \} \\
    & \subset \{ \tau_{\mathrm{meet}} > \Lt \widetilde{R}_{N^{2\gamma-\varepsilon}} \}
    \end{split}
  \end{equation}
  (note that at time $\Lt \wt{R}_i$, the two walks have a distance of
  at least $\Ls (\norm{\widehat{D}_i}-1)$ and until the next
  simultaneous regeneration, their distance can decrease by at most
  $\Lt (\wt{R}_{i+1} - \wt{R}_i) 2 R_X$).  Now
  \begin{align}
    & \bP_{Nx}\big( N^{2\gamma-\varepsilon} \le \widehat{\tau}_{N^\gamma} \le N^{2\gamma+\varepsilon}, \,
    \widehat{\tau}_{N^\gamma} < \widehat{\tau}_{\mathrm{near}}, \, \text{there exists } 0 \le i < \widehat{\tau}_{N^\gamma}
      \text{ with } \wt{R}_{i+1} - \wt{R}_i \ge c_0 \norm{\widehat{D}_i} \big) \notag \\
    & \le \bE_{Nx}\Big[ \ind{N^{2\gamma-\varepsilon} \le \widehat{\tau}_{N^\gamma} \le N^{2\gamma+\varepsilon}, \,
      \widehat{\tau}_{N^\gamma} < \widehat{\tau}_{\mathrm{near}}} \sum_{i=0}^{\widehat{\tau}_{N^\gamma}-1}
      \ind{\wt{R}_{i+1} - \wt{R}_i \ge c_0 \norm{\widehat{D}_i}} \Big] \notag \\
    & \le \sum_{i=0}^{N^{2\gamma+\varepsilon}} \bE_{Nx}\Big[ \ind{\wt{R}_{i+1} - \wt{R}_i \ge c_0 \norm{\widehat{D}_i}}
      \ind{i < \widehat{\tau}_{N^\gamma} \wedge \widehat{\tau}_{\mathrm{near}}} \Big] \notag \\
    & = \sum_{i=0}^{N^{2\gamma+\varepsilon}} \bE_{Nx}\Big[ \ind{i < \widehat{\tau}_{N^\gamma} \wedge \widehat{\tau}_{\mathrm{near}}}
      \bP_{Nx}\big( \ind{\wt{R}_{i+1} - \wt{R}_i \ge c_0 \norm{\widehat{D}_i}} \,\big|\, \wt{\mathcal{F}}_{\wt{R}_i} \big) \Big]
      \notag \\
    &  \le C \sum_{i=0}^{N^{2\gamma+\varepsilon}} \bE_{Nx}\big[ \ind{i < \widehat{\tau}_{N^\gamma} \wedge \widehat{\tau}_{\mathrm{near}}}
      \norm{\widehat{D}_i}^{-\beta} \big]
      \le C \bE_{Nx}\bigg[ \sum_{i=1}^{\widehat{\tau}_{N^\gamma} \wedge \widehat{\tau}_{\mathrm{near}} - 1} \norm{\widehat{D}_i}^{-\beta} \bigg]
      \le C M^{2-\beta} 
      \label{eq:Greenfct.annulus bound}
  \end{align}
  by Lemma~\ref{lem:Greenfct.annulus}. We can thus estimate as follows: 
  \begin{align}
    & \bP_{Nx}\big( N^{2\gamma-\varepsilon} \le \widehat{\tau}_{N^\gamma} \le N^{2\gamma+\varepsilon}, \,
    \widehat{\tau}_{N^\gamma} < \widehat{\tau}_{\mathrm{near}}, \, \wt{R}_{i+1} - \wt{R}_i < c_0 \norm{\widehat{D}_i}
      \text{ for all } 0 \le i < \widehat{\tau}_{N^\gamma}\big) \notag \\
    & = \bP_{Nx}\big( N^{2\gamma-\varepsilon} \le \widehat{\tau}_{N^\gamma} \le N^{2\gamma+\varepsilon}, \,
      \widehat{\tau}_{N^\gamma} < \widehat{\tau}_{\mathrm{near}}\big) \notag \\
    & \quad - \bP_{Nx}\big( N^{2\gamma-\varepsilon} \le \widehat{\tau}_{N^\gamma} \le N^{2\gamma+\varepsilon}, \,
    \widehat{\tau}_{N^\gamma} < \widehat{\tau}_{\mathrm{near}}, \, \text{there exists } 0 \le i < \widehat{\tau}_{N^\gamma}
      \text{ with } \wt{R}_{i+1} - \wt{R}_i \ge c_0 \norm{\widehat{D}_i} \big) \notag\\
      \label{eq:Frage}
    & \ge \frac{1}{\gamma} -o(1) 
      - C M^{2-\beta}
  \end{align}
  for $N\to \infty$ by Lemma~\ref{lem:AHs} and \eqref{eq:Greenfct.annulus bound}, thus
  \begin{equation}
    \label{eq:1.17}
    \begin{split}
      \liminf_{N\to\infty} \bP_{Nx}\big( \tau_{\mathrm{meet}} > N^{2\gamma-\varepsilon} \big) \ge
      \liminf_{N\to\infty} \bP_{Nx}\big( \tau_{\mathrm{meet}} > \Lt \widetilde{R}_{N^{2\gamma-\varepsilon}} \big)
      \ge \frac{1}{\gamma} - C M^{2-\beta}.
    \end{split}
  \end{equation}
  Replacing $\gamma$ by $\gamma+\varepsilon/2$ in \eqref{eq:1.17}, then taking $\varepsilon \downarrow 0$
  and then $M\to\infty$ in the construction shows the lower bound in \eqref{eq:limit_meeting_time}.
  \smallskip

  For the upper bound in \eqref{eq:limit_meeting_time}, we argue as follows:
  Write
  \begin{align}
    & \bP_{Nx}\big( \tau_{\mathrm{meet}} > N^{2\gamma} \big) \notag \\
    & = \bP_{Nx}\big( \tau_{\mathrm{meet}} > N^{2\gamma}, \, \Lt \wt{R}_{\widehat{\tau}_{\mathrm{near}}} > N^{2\gamma}/2 \big)
      + \bP_{Nx}\big( \tau_{\mathrm{meet}} > N^{2\gamma}, \, \Lt \wt{R}_{\widehat{\tau}_{\mathrm{near}}} \le N^{2\gamma}/2 \big) \notag \\
     \label{eq:upper_bound_1v2}
    & \le \bP_{Nx}\big( \Lt \wt{R}_{\widehat{\tau}_{\mathrm{near}}} > N^{2\gamma}/2 \big)
      + \bP_{Nx}\big( \tau_{\mathrm{meet}} \ge \Lt \wt{R}_{\widehat{\tau}_{\mathrm{near}}} + N^{2\gamma}/2 \big).
  \end{align}
  where we already know for the second term in the last line
  \begin{equation*}
        \lim\limits_{N\to \infty} \bP_{Nx}\big( \Lt\wt{R}_{\widehat{\tau}_{\mathrm{near}}}>N^{2\gamma}/2 \big) = \frac{1}{\gamma}.
  \end{equation*}
  It remains to show that
  \begin{equation}
    \label{eq:upper_bound_2v2}
    \lim\limits_{N\to \infty}\bP_{Nx}\big( \tau_{\mathrm{meet}} \ge \Lt \wt{R}_{\widehat{\tau}_{\mathrm{near}}} + N^{2\gamma}/2 \big) =0.
  \end{equation}
  In fact, the proof we provide for \eqref{eq:upper_bound_2v2} shows that one could prove the stronger claim that for any $\varepsilon>0$ one obtains $\bP_{Nx}( \tau_{\mathrm{meet}} - \Lt \wt{R}_{\widehat{\tau}_{\mathrm{near}}} \ge N^\varepsilon ) \to 0$. We stick to \eqref{eq:upper_bound_2v2} since it is the exact limit we consider here.
  
  At this point we want to give a small sketch of the idea to prove \eqref{eq:upper_bound_2v2}. In essence we want to use a similar iteration we used in the proof of Lemma~\ref{lem:AHs} to get from a distance of $N$ to constant distance that is independent of $N$. To that end define the sequence of stopping times $\widehat{H}_i \coloneqq\inf\{ k>\widehat{H}_{i-1}\colon \norm{\widehat{D}_k}\le M \}$, starting with $\widehat{H}_1 = \widehat{\tau}_{\mathrm{near}}$, at which the coarse-grained random walks along their regeneration times hit a distance of $M$. At every $\widehat{H}_i$ there is a positive chance $\rho(M\Ls)$ independent of $N$ that the original random walkers meet within the next $M\Ls$ steps. And afterwards, if the attempt failed, we wait until the coarse-grained random walks separate to a distance of $\log N$. Thus the attempts are independent of each other and it is sufficient to find a suitable upper bound for the time each attempt takes to ensure there are enough.
  \begin{align}
        \label{eq:7v2}
        \begin{split}
                \bP_{Nx}& \big( \tau_{\mathrm{meet}}- \Lt \wt{R}_{\widehat{\tau}_{\mathrm{near}}} > \frac{1}{2}N^{2\gamma} \big)\\ &\le \bP_{M} \big( \tau_{\mathrm{meet}} > \frac{1}{2}N^{2\gamma} , \wt{R}_{\widehat{H}_{\log^{1/2} N}} \le N^{\gamma} \big) + \bP_{M} \big( \wt{R}_{\widehat{H}_{\log^{1/2} N}} > N^{\gamma} \big).
        \end{split}
  \end{align}
  To control the right hand side of the above display, we determine a sufficient upper bound on the second term. For the first term note that on the event $\{\wt{R}_{\widehat{H}_{\log^{1/2} N}} \le N^{\gamma}\}$ the original random walkers will have at least $\log^{1/2}N$ independent attempts to meet before time $N^{2\gamma}/2$ each having a chance greater than $\rho(M\Ls)>0$ to succeed and thus the first term will go to $0$ for $N\to \infty$.\\
  If an attempt to meet for original random walkers $X$ and $X'$, starting from a distance of $M\Ls$ fails, we want to wait for the coarse-grained pair $\widehat{X}$ and $\widehat{X}'$ to separate to a distance of $\log N$. Again we observe the coarse-grained pair along their simultaneous regeneration times. By the separation lemma, see Lemma~\ref{lem:abstract separ}, we have
  \begin{align}
        \label{eq:2v2}
        \begin{split}
                \bP_{M}&\Big( \sum_{i=1}^{\widehat{\tau}_{\log N}} \wt{R}_i -\wt{R}_{i-1} > \log^4 N \Big)\\
                &\le \bP_{M}\Big( \sum_{i=1}^{\log^3 N} \wt{R}_i -\wt{R}_{i-1} > \log^4 N \Big) + \bP_{M}\Big( \widehat{\tau}_{\log N} > \log^3 N \Big)\\
                &\le \log^3(N) \cdot \log^{-\beta}(N) + \exp(-C\log^c N).
        \end{split}
  \end{align}
  Additionally we need an upper bound for the amount of time it takes to get back to the constant distance $M$ from which another attempt to meet can be made. For a small positive constant $\tilde{c}>0$ we obtain
   \begin{align}
        \label{eq:3v2}
        \begin{split}
                \bP_{\log N}\Big( \sum_{i=1}^{\widehat{\tau}_{\mathrm{near}}} \wt{R}_i-\wt{R}_{i-1}>N^{2\tilde{c}} \Big) &\le \bP_{\log N}\Big( \sum_{i=1}^{N^{\tilde{c}}} \wt{R}_i-\wt{R}_{i-1}>N^{2\tilde{c}} \Big) + \bP_{\log N}\big( \widehat{\tau}_{\mathrm{near}} >N^{\tilde{c}} \big)\\
                &\le N^{\tilde{c}(1-2\beta)} + \bP_{\log N}\big( \widehat{\tau}_{\mathrm{near}} >N^{\tilde{c}} \big)
        \end{split}
  \end{align}
  It remains to find a suitable upper bound for $\bP_{\log N}( \widehat{\tau}_{\mathrm{near}} >N^{\tilde{c}} )$. We do this in two parts, which we obtain on the right hand side of the display below
  \begin{align}
        \label{eq:4v2}
        \bP_{\log N}\big( \widehat{\tau}_{\mathrm{near}} >N^{\tilde{c}} \big) = \bP_{\log N}\big(\widehat{\tau}_{\mathrm{near}} > N^{\tilde{c}}>\widehat{\tau}_{N^{\tilde{c}/2-\varepsilon}}\big) + \bP_{\log N}\big( \widehat{\tau}_{\mathrm{near}}, \widehat{\tau}_{N^{\tilde{c}/2-\varepsilon}}>N^{\tilde{c}} \big),
  \end{align}
  where $\varepsilon>0$ is some small enough constant such that $\tilde{c}/2-\varepsilon>0$.
  Starting with the second term we get by the separation lemma, Lemma~\ref{lem:abstract separ},
  \begin{align*}
        \bP_{\log N}\big( \widehat{\tau}_{\mathrm{near}}, \widehat{\tau}_{N^{\tilde{c}/2-\varepsilon}}>N^{\tilde{c}} \big) &\le \bP_{\log N}\big( \widehat{\tau}_{N^{\tilde{c}/2-\varepsilon}} >N^{\tilde{c}}\big)\\
        &= \bP_{\log N}\big( \widehat{\tau}_{N^{\tilde{c}'}} > N^{2\tilde{c}'+2\varepsilon} \big)\\
        &\le \exp(-CN^{-2c(\tilde{c}'+\varepsilon)}),
  \end{align*}
  where $\tilde{c}'=\tilde{c}/2-\varepsilon$. For the first term in \eqref{eq:4v2} we obtain
   \begin{align}
        \label{eq:8v2}
        \begin{split}
                \bP_{\log N}&\big(\widehat{\tau}_{\mathrm{near}} > N^{\tilde{c}}>\widehat{\tau}_{N^{\tilde{c}/2-\varepsilon}}\big)\\
                &\le \bP_{\log N}\big( \widehat{\tau}_{\mathrm{near}} > \widehat{\tau}_{N^{\tilde{c}/2-\varepsilon}} \big)\\
                &=\bP_{\log N}\big(\widehat{\tau}_{\mathrm{near}} \ge \widehat{\tau}_{\le m_N} > \widehat{\tau}_{N^{\tilde{c}/2-\varepsilon}}  \big) + \bP_{\log N}\big( \widehat{\tau}_{\mathrm{near}} > \widehat{\tau}_{N^{\tilde{c}/2-\varepsilon}} >\widehat{\tau}_{\le m_N} \big)\\
                &\le \bP_{\log N}\big( \widehat{\tau}_{\le m_N} > \widehat{\tau}_{N^{\tilde{c}/2-\varepsilon}}  \big) + \bP_{m_N}\big( \widehat{\tau}_{\mathrm{near}} > \widehat{\tau}_{N^{\tilde{c}/2-\varepsilon}}  \big)\\
                &\le C\frac{\log\log N - \log m_N}{(\frac{\tilde{c}}{2}-\varepsilon)\log N - \log m_N} + C\frac{\log\log N}{\delta^{m_N}(\frac{\tilde{c}}{2}-\varepsilon)\log N},
        \end{split}
  \end{align}
  where the estimates in the last line follow by calculations done in the proof of Lemma~\ref{lem:AHs}; see equations~\eqref{eq:5} and \eqref{eq:6}.

  Turning back to \eqref{eq:7v2} we now have the tools to give an upper bound for the right hand side of the inequality. Since $\wt{R}_{\widehat{H}_{\log^{1/2}N}}$ is the $\widehat{H}_{\log^{1/2}N}$-th time of the above described iteration at which the random walkers can make an attempt at meeting within the next $M\Ls$ steps. Defining $\widehat{\tau}^i_{\log N} \coloneqq \inf\{k > \widehat{H}_{i-1}\colon \norm{\widehat{D}_k} \ge \log N \}$ we get
   \begin{align*}
     \bP_{M}&\big( \wt{R}_{\widehat{H}_i}-\wt{R}_{\widehat{H}_{i-1}}> \log^4 N + N^{2\tilde{c}} \big)\\
            &\le \bP_{M}\big( \sum_{j=\widehat{H}_{i-1}}^{\widehat{\tau}^i_{\log N}} \wt{R}_j-\wt{R}_{j-1} > \log^4 N\big) + \bP_M\big( \sum_{j=\widehat{\tau}^i_{\log N}+1}^{\widehat{H}_i} \wt{R}_j-\wt{R}_{j-1} > N^{2\tilde{c}}\big)\\
            &\le C\log^{-1+\delta}(N),
   \end{align*}
   where we used the upper bounds obtained in \eqref{eq:2v2}, \eqref{eq:3v2} and \eqref{eq:8v2} and the fact that we can choose $\beta$ arbitrarily large. Therefore, for the second term in \eqref{eq:7v2} we have
  \begin{align*}
        \bP_{M}&\big( \wt{R}_{\widehat{H}_{\log^{1/2}N}} > N^\gamma \big)\\
        &\le \bP_{M}\big( \exists\, i\le \log^{1/2}N\colon \wt{R}_{\widehat{H}_{i}}-\wt{R}_{\widehat{H}_{i-1}} > \frac{N^\gamma}{\log^{1/2}N} \big)\\
        &\le \log^{1/2}N \cdot C\log^{-1} N.
  \end{align*}
  In conclusion we have
  \begin{equation*}
        \bP_{Nx} \big( \tau_{\mathrm{meet}}- \Lt \wt{R}_{\widehat{\tau}_{\mathrm{near}}} > \frac{1}{2}N^{2\gamma} \big) \xrightarrow[N\to \infty]{} 0,
  \end{equation*}
 and therefore, returning back to equation~\eqref{eq:upper_bound_1v2}, we obtain the upper bound
\begin{equation*}
        \lim\limits_{N\to \infty} \bP_{Nx}\big( \tau_{\mathrm{meet}} > N^{2\gamma} \big) \le \frac{1}{\gamma},
\end{equation*}
which concludes the proof.
\end{proof}

\subsection{Completion of the proof of Theorem~\ref{thm:Pct}}

We briefly sketch how to complete the proof of Theorem~\ref{thm:Pct}
based on the results from Section~\ref{sect:Tails}: Since
$\tau_{\mathrm{meet}} \le \tau_{\mathrm{coal}}$, one half follows
immediately from Proposition~\ref{prop:limi_meeting_time}, namely
$\liminf_{N\to \infty} \bP_{Nx}\big( \tau_{\mathrm{coal}} >N^{2\gamma}
\big) \ge \frac{1}{\gamma}$.

For the other direction, we argue as above (see the discussion
following \eqref{eq:upper_bound_2v2}) that for any $\varepsilon>0$,
$\lim_{N\to\infty} \bP_{Nx}\big( \tau_{\mathrm{coal}} -
\tau_{\mathrm{meet}} \ge N^\varepsilon \big) = 0$.  (In fact, one
could even show that
$\lim_{T\to \infty} \inf_{y \in \Z^2} \bP_y\big( \tau_{\mathrm{coal}}
- \tau_{\mathrm{meet}} \le T \big) = 1$.)

\subsection{Asymptotics for the Laplace transform of pair coalescence times:
  Proof of Corollary~\ref{cor:Malecot}}

\begin{proof}[Proof of \eqref{eq:LT_coalescence_time}, using \eqref{eq:limit_coalescence_time}]
  Fix $\gamma \ge 1$.
  We have for $\mu_N = m N^{-2\gamma}$
  \[
    \bE_{Nx}\big[ (1-\mu_N)^{2 \tau_{\mathrm{coal}}} \big] \sim \bE_{Nx}\big[ \exp(- 2 \mu_N \tau_{\mathrm{coal}}) \big].
  \]

  For $0 < \varepsilon < 2\gamma$ we have for all sufficiently large $N$
  \begin{align*}
    \exp(- N^{-\varepsilon}) \bP_{Nx}\big( \tau_{\mathrm{coal}} \le N^{2\gamma-\varepsilon}\big)
    \le \bE_{Nx}\big[ \exp(- m N^{-2\gamma} \tau_{\mathrm{coal}}) \big]
    \le \bP_{Nx}\big( \tau_{\mathrm{coal}} \le N^{2\gamma+\varepsilon}\big) + \exp(- N^{\varepsilon}) .
  \end{align*}
  Thus, \eqref{eq:limit_coalescence_time} yields
  \begin{align*}
    1 - \frac{1}{\gamma - \varepsilon/2}
    \le \liminf_{N\to\infty} \bE_{Nx}\big[ \exp(- 2\mu_N \tau_{\mathrm{coal}}) \big]
    \le \limsup_{N\to\infty} \bE_{Nx}\big[ \exp(- 2\mu_N \tau_{\mathrm{coal}}) \big]
    \le 1 - \frac{1}{\gamma + \varepsilon/2}.
  \end{align*}
  Taking $\varepsilon \downarrow 0$ gives \eqref{eq:LT_coalescence_time}.
\end{proof}

\begin{appendix}
  \section{Detailed proof of \eqref{eq:1.6}}
  \label{sect:proof of 1.6}
  Recall $ m_N = \sqrt{\log\log N}$ and $\widetilde{m}_N = \log N$. Put
  \begin{equation}
    \label{def:a_N,M}
    a_{N,M} := \sup \big\{ \bP_y(\widehat{\tau}_{N^\gamma} < \widehat{\tau}_{\mathrm{near}}) : M < \norm{y} \le m_N \big\}
  \end{equation}
  and
  \begin{equation}
    \label{def:b_N,M}
    b_{N,M} := \sup \big\{ \bP_z(\widehat{\tau}_{N^\gamma} < \widehat{\tau}_{\mathrm{near}}) : \log N < \norm{z} \le 2 \log N \big\} .
  \end{equation}

  For $y \in \Z^d $ with $M < \norm{y} \le m_N$ we have
  \begin{align}
    \bP_y(\widehat{\tau}_{N^\gamma} < \widehat{\tau}_{\mathrm{near}})
    & = \bE_y\big[ \bP_y(\widehat{\tau}_{N^\gamma} < \widehat{\tau}_{\mathrm{near}} \mid \mathcal{F}_{\widehat{\tau}_{\wt{m}_N}})\big] \notag \\
    & = \bE_y\left[ \ind{\widehat{\tau}_{\wt{m}_N} < \widehat{\tau}_{\mathrm{near}}}
      \bP_{\widehat{D}_{\widehat{\tau}_{\wt{m}_N}}}(\widehat{\tau}_{N^\gamma} < \widehat{\tau}_{\mathrm{near}}) \right]  \notag\\
    & \le \bE_y\left[ \ind{\widehat{\tau}_{\wt{m}_N} < \widehat{\tau}_{\mathrm{near}}} \big( b_{N,M} + C (\log N)^3(\log N)^{-\beta} \big)
      \right] \notag \\
    & = \bP_y(\widehat{\tau}_{\wt{m}_N} < \widehat{\tau}_{\mathrm{near}}) \big( b_{N,M} + C (\log N)^3(\log N)^{-\beta} \big)
      \label{eq:A3}
  \end{align}
  where we estimate
  \begin{align*}
    \bP_y(\norm{ \widehat{D}_{\widehat{\tau}_{\wt{m}_N}} } > 2 \log N)
    & \le \bP_y\big(\widehat{\tau}_{\wt{m}_N} > (\log N)^3\big) \\
    & \quad
      + \bP_y\big( \norm{\widehat{D}_k-\widehat{D}_{k-1}} \ge \log N \text{ for some } 1 \le k \le (\log N)^3 \big) \\
    & \le e^{-C(\log N)^c} + C(\log N)^3(\log N)^{-\beta} \\
    &\le  C(\log N)^3(\log N)^{-\beta}
  \end{align*}
  where we used Lemma~\ref{lemma:exitAnnulus} and the tail bound for increments from \eqref{eq:regen_time_tail_bounds} combined with finite range for the transition kernel $p$ in the second inequality.

  Obviously, $\bP_y(\widehat{\tau}_{\widetilde{m}_N} < \widehat{\tau}_{\mathrm{near}}) = 1 - \bP_y(\widehat{\tau}_{\widetilde{m}_N} > \widehat{\tau}_{\mathrm{near}})$.
  By constructing suitable ``corridors'' (as in the proof of Lemma~\separationlemma{} in \cite{BirknerDepperschmidtSchlueter2024}) we can estimate
  \begin{align}
    & \sup \big\{ \bP_y(\widehat{\tau}_{\widetilde{m}_N} < \widehat{\tau}_{\mathrm{near}}) : M < \norm{y} \le m_N \big\} \notag \\
    & = 1 - \inf \big\{ \bP_y(\widehat{\tau}_{\widetilde{m}_N} > \widehat{\tau}_{\mathrm{near}}) : M < \norm{y} \le m_N \big\}
      \le 1 - \delta^{m_N}
  \end{align}
  with some (small) $\delta > 0$.
  Thus, we obtain from \eqref{eq:A3} that
  \begin{equation}
    \label{eq:A5}
    a_{N,M} \le (1 - \delta^{m_N}) \big( b_{N,M} + C (\log N)^3(\log N)^{-\beta} \big)
  \end{equation}

  To gain something from \eqref{eq:A5}, we want to estimate $b_{N,M}$ in terms of $a_{N,M}$.
  Consider $z \in \Z^d$ with $\log N < \norm{z} \le 2\log N$, then
  \begin{align}
    \bP_{z} (\widehat{\tau}_{N^\gamma} < \widehat{\tau}_{\mathrm{near}})
    & = \bP_{z} (\widehat{\tau}_{N^\gamma} < \widehat{\tau}_{\le m_N} \le \widehat{\tau}_{\mathrm{near}})
      + \bP_{z} (\widehat{\tau}_{\le m_N} < \widehat{\tau}_{N^\gamma} < \widehat{\tau}_{\mathrm{near}}) \notag \\
    & \le C \frac{\log\log N}{\log N} + a_{N,M}
      \label{eq:A6}
  \end{align}
  where we used the hitting lemma \cite[Lemma~\hittingLemma{} and Remark~\abstractPropRem{}]{BirknerDepperschmidtSchlueter2024},
  recalled in Lemma~\ref{lemma:exitAnnulus} (with $r=\norm{z}$, $r_1 = m_N, r_2=N^\gamma$) for the
  first term and observe that
  \begin{align*}
    \bP_{z} (\widehat{\tau}_{\le m_N} < \widehat{\tau}_{N^\gamma} < \widehat{\tau}_{\mathrm{near}})
    & = \bE_z\left[ \ind{\widehat{\tau}_{\le m_N} < \widehat{\tau}_{N^\gamma} \wedge \widehat{\tau}_{\mathrm{near}}}
      \bP_{\widehat{D}_{\widehat{\tau}_{\le m_N}}}(\widehat{\tau}_{N^\gamma} < \widehat{\tau}_{\mathrm{near}}) \right]
      \le a_{N,M}.
  \end{align*}
  Taking the supremum of $z$'s with $\log N < \norm{z} \le 2 \log N$ we obtain from
  \eqref{eq:A6} that
  \begin{equation}
    b_{N,M} \le C \frac{\log\log N}{\log N} + a_{N,M}.
    \label{eq:A7}
  \end{equation}

  Inserting \eqref{eq:A7} into \eqref{eq:A5} yields
  \begin{equation}
    a_{N,M} \le (1 - \delta^{m_N}) \Big( a_{N,M} +  C \frac{\log\log N}{\log N} \Big),
    \label{eq:A8}
  \end{equation}
  iterating \eqref{eq:A8} gives
  \begin{equation}
    a_{N,M} \le C \frac{\log\log N}{\log N} \sum_{k=1}^\infty (1 - \delta^{m_N})^k
    \le C \frac{\log\log N}{\log N} \frac{1}{\delta^{m_N}}
    \mathop{\longrightarrow}_{N\to\infty} 0.
  \end{equation}

  To complete the proof of \eqref{eq:1.6} we note that
  \begin{align*}
    \bP_{Nx}(\widehat{\tau}_{\le m_N} < \widehat{\tau}_{N^\gamma} < \widehat{\tau}_{\mathrm{near}} )
    = \bE_{Nx} \left[ \ind{\widehat{\tau}_{\le m_N} <  \widehat{\tau}_{N^\gamma} \wedge \widehat{\tau}_{\mathrm{near}}}
    \bP_{\widehat{D}_{\widehat{\tau}_{\le m_N}}}(\widehat{\tau}_{N^\gamma} < \widehat{\tau}_{\mathrm{near}}) \right]
    \le a_{N,M}.
  \end{align*}
  \hfill $\qed$

  \section{Detailed proof of the lower bound in \eqref{eq:AH2}}
  \label{sect:lb AH2}
  We have
  \begin{align*}
    & \bP_{Nx}(\widehat{\tau}_{N^\gamma} \le N^{2\gamma-\varepsilon}) \\
    & \le \bE_{Nx}\Big[ \ind{\widehat{\tau}_{N^\gamma/2} \le N^{2\gamma-\varepsilon}} \Big(
      \ind{ \norm{\widehat{D}_{\widehat{\tau}_{N^\gamma/2}}} \ge \tfrac34 N^\gamma }
      + \ind{ N^\gamma /2 \le \norm{\widehat{D}_{\widehat{\tau}_{N^\gamma/2}}} < \tfrac34 N^\gamma }
      \bP_{\widehat{D}_{\widehat{\tau}_{N^\gamma/2}}}(\widehat{\tau}_{N^\gamma} \le N^{2\gamma-\varepsilon}) \Big) \Big] \\
    & \le N^{2\gamma-\varepsilon} \big(\tfrac14 N^\gamma\big)^{-\beta}
      + \sup \Big\{ \bP_y(\widehat{\tau}_{N^\gamma} \le N^{2\gamma-\varepsilon}) : \tfrac12 N^\gamma \le \norm{y} \le \tfrac34 N^\gamma \Big\}
  \end{align*}
  and then argue as above.

  \section{Proof details for Lemma~\ref{lem:Greenfct.annulus}}
  \label{sect:Greenfct.annulus}

  \begin{proof}[Proof details for Lemma~\ref{lem:Greenfct.annulus}:
    The function $f$ from \eqref{eq:Poissonproblemannulus.ansatz}
    satisfies \eqref{eq:Poissonproblemannulus}.]
    Recall that we need to verify that the function
    \begin{equation}
      f(x) = 2^{\beta-2} M^{2-\beta} - \norm{x}_2^{2-\beta}
      \indset{[M/2,\infty)}(\norm{x}_2), \quad x \in \Z^2
      \tag{\ref{eq:Poissonproblemannulus.ansatz}}
    \end{equation}
    fulfills
    \begin{align}
      \bE\big[ f(\widehat{D}_{i+1})-f(\widehat{D}_i) \,\big|\, \widehat{\mathcal{F}}_i \big]
      \le -c \norm{\widehat{D}_i}_2^{-\beta}
      \quad \text{ on the event } M < \norm{\widehat{D}_i}_2 < N^{2\gamma}
      \tag{\ref{eq:Poissonproblemannulus}}
    \end{align}
    for some $c>0$.

    In cartesian coordinates, we have
    \[
      f(x_1,x_2) = 2^{\beta-2} M^{2-\beta} - (x_1^2 + x_2^2)^{1-\beta/2} \quad\text{for}\quad
      x_1^2+x_2^2 \ge M^2/4
    \]
    and in this case
    \begin{align}
      \frac{\partial}{\partial x_1}f(x_1,x_2)
      & = 2(\beta/2-1) x_1 (x_1^2 + x_2^2)^{-\beta/2} \\
      \frac{\partial}{\partial x_2}f(x_1,x_2)
      & = 2(\beta/2-1) x_2 (x_1^2 + x_2^2)^{-\beta/2} \\
      \frac{\partial^2}{\partial x_1^2}f(x_1,x_2)
      & = 2(\beta/2-1) (x_1^2 + x_2^2)^{-\beta/2}
        - 4(\beta/2-1)(\beta/2) x_1^2 (x_1^2 + x_2^2)^{-\beta/2-1} \\
      \frac{\partial^2}{\partial x_2^2}f(x_1,x_2)
      & = 2(\beta/2-1) (x_1^2 + x_2^2)^{-\beta/2}
        - 4(\beta/2-1)(\beta/2) x_2^2 (x_1^2 + x_2^2)^{-\beta/2-1} \\
      \frac{\partial^2}{\partial x_1 \partial x_2}f(x_1,x_2)
      & = -4 (\beta/2-1)(\beta/2) x_1 x_2 (x_1^2 + x_2^2)^{-\beta/2-1}
        = \frac{\partial^2}{\partial x_2 \partial x_1}f(x_1,x_2) .
    \end{align}

    Now for $\norm{(x_1,x_2)}_2 \ge M$ we have by Taylor expansion
    (we write $\Psi^\indi(x,y)$ to denote the transition probabilites of $\widehat{D}$ under
    the $\indi$-dynamics)
    \begin{align}
      & \sum_{y \in \Z^2} \Psi^\indi(x,y)\big( f(y) -f(x)\big) \notag \\
      & = \sum_{y \, : \, \norm{y-x}_2 < \norm{x}_2/4} \Psi^\indi(x,y)\big( f(y) -f(x)\big) + R_1(x) \notag \\
      & = \sum_{y \, : \, \norm{y-x}_2 < \norm{x}_2/4} \Psi^\indi(x,y) \Big( 2(\beta/2-1)(y_1-x_1)\norm{x}_2^{-\beta}
        + 2(\beta/2-1)(y_2-x_2)\norm{x}_2^{-\beta} \notag \\[-2.5ex]
      & \hspace{11.5em} + \big( 2(\beta/2-1)\norm{x}_2^{-\beta}
        - 4(\beta/2-1)(\beta/2) x_1^2\norm{x}_2^{-\beta-2} \big) (y_1-x_1)^2 \notag \\
      & \hspace{11.5em} + \big( 2(\beta/2-1)\norm{x}_2^{-\beta}
        - 4(\beta/2-1)(\beta/2) x_2^2\norm{x}_2^{-\beta-2} \big) (y_2-x_2)^2 \notag \\
      & \hspace{11.5em} - 8 (\beta/2-1)(\beta/2) x_1 x_2 \norm{x}_2^{-\beta-2} (y_1-x_1)(y_2-x_2)
        + R_2(x,y) \Big) \notag \\
      & \qquad + R_1(x)
    \end{align}
    with $R_1(x) = \sum_{y \, : \, \norm{y-x}_2 \ge \norm{x}_2/4} \Psi^\indi(x,y) \big( f(y) -f(x)\big)$ and
    $R_2(x,y)$ being the remainder term when Taylor expanding $y \mapsto f(y)$ around $x$ to second order. 
    For example, using the Lagrange form of the remainder term, we have for some $\zeta = \zeta(x,y) \in \{ x + t(y-x) : 0 \le 1 \le t\}$
    \begin{align}
      \label{eq:R2 remainder}
      R_2(x,y)
      & = \frac{1}{6} \Big(
        \frac{\partial^3}{\partial x_1^3} f(\zeta) (y_1-x_1)^3
        + 3 \frac{\partial^3}{\partial x_1^2 \partial x_2} f(\zeta) (y_1-x_1)^2 (y_2-x_2) \notag \\
      & \qquad  \qquad
        + 3 \frac{\partial^3}{\partial x_1 \partial x_2^2} f(\zeta) (y_1-x_1) (y_2-x_2)^3
        + \frac{\partial^3}{\partial x_2^3} f(\zeta) (y_2-x_2)^3 \Big) .
    \end{align}
    
    We have
    \begin{align}
      \sum_{y \, : \, \norm{y-x}_2 < \norm{x}_2/4} \Psi^\indi(x,y) 2(\beta/2-1)\norm{x}^{-\beta/2}
      \big( (y_1-x_1) + (y_2-x_2)\big) = 0
    \end{align}
    by symmetry and we can bound
    \begin{align}
      |R_1(x)| \le \norm{f}_\infty \sum_{y \, : \, \norm{y-x}_2 \ge \norm{x}/4} \Psi^\indi(x,y) \le C M^{2-\beta}\norm{x}
      _2^{-\beta}
    \end{align}
    by the tail bounds on inter-regeneration times and increments, cf.\ \eqref{eq:regen_time_tail_bounds}.
    We have 
    \begin{align}
      \label{eq:R2 remainder bound}
      \left| \frac{\partial^3}{\partial x_1^3} f(\zeta) \right|
      + \left| \frac{\partial^3}{\partial x_1^2 \partial x_2} f(\zeta)\right|
      + \left| \frac{\partial^3}{\partial x_1 \partial x_2^2} f(\zeta) \right|
      + \left| \frac{\partial^3}{\partial x_2^3} f(\zeta)\right|
      \le \frac{C}{\norm{\zeta}_2^{\beta+1}}
      \quad \text{for } \norm{\zeta}_2 \ge 1,
    \end{align}
    thus using \eqref{eq:R2 remainder}, \eqref{eq:R2 remainder bound}
    and the fact that increments have (more than) three moments (cf.\ \eqref{eq:regen_time_tail_bounds})
    \begin{align}
      \sum_{y \, : \, \norm{y-x}_2 < \norm{x}_2/4} \Psi^\indi(x,y) |R_2(x,y)|
      & \le \sum_{y \, : \, \norm{y-x}_2 < \norm{x}_2/4} \Psi^\indi(x,y) \frac{|y_1-x_1|^3 + |y_2-x_2|^3}{\norm{\zeta(x,y)}_2^{\beta+1}}
        \notag \\
      & \le \frac{C}{\norm{x}_2^{\beta+1}} \sum_y \Psi^\indi(x,y) \big( |y_1-x_1|^3 + |y_2-x_2|^3 \big)
      \le \frac{C}{\norm{x}_2^{\beta+1}} .
    \end{align}
    
    Furthermore,
    \begin{align}
      & \sum_{y \, : \, \norm{y-x}_2 < \norm{x}_2/4} \Psi^\indi(x,y) \Big(
        \big( 2(\beta/2-1)\norm{x}_2^{-\beta}
        - 4(\beta/2-1)(\beta/2) x_1^2\norm{x}_2^{-\beta-2} \big) (y_1-x_1)^2 \notag \\[-2ex]
      & \hspace{10.5em} + \big( 2(\beta/2-1)\norm{x}_2^{-\beta}
        - 4(\beta/2-1)(\beta/2) x_2^2\norm{x}_2^{-\beta-2} \big) (y_2-x_2)^2 \notag \\
      & \hspace{10.5em} - 8 (\beta/2-1)(\beta/2) x_1 x_2 \norm{x}_2^{-\beta-2} (y_1-x_1)(y_2-x_2)
        \Big) \notag \\
      & = \frac{2\beta-2}{\norm{x}^{-\beta}}
        \sum_{y \, : \, \norm{y-x}_2 < \norm{x}_2/4} \Psi^\indi(x,y) \bigg(
        \Big( 1 - \beta\frac{x_1^2}{\norm{x}_2^2} \Big) (y_1-x_1)^2
        + \Big( 1 - \beta\frac{x_2^2}{\norm{x}_2^2} \Big) (y_2-x_2)^2 \notag \\[-1.5ex]
      & \hspace{26em} 
        -2 \beta\frac{x_1 x_2}{\norm{x}_2^2} (y_1-x_1)(y_2-x_2) \bigg) 
        \notag \\
      & = \frac{2\beta-2}{\norm{x}_2^{-\beta}}
        \Big( (1, 0) \widehat{C}^\indi_{\norm{x}_2} (1, 0)^T
        + (0, 1) \widehat{C}^\indi_{\norm{x}_2} (0, 1)^T \notag \\[-4.5ex]
      & \hspace{12em} -\beta (x_1/\norm{x}_2, x_2/\norm{x}_2) \widehat{C}^\indi_{\norm{x}_2} (x_1/\norm{x}_2, x_2/\norm{x}_2)^T \Big),
    \end{align}
    where
    \begin{align}
      \widehat{C}^\indi_{\norm{x}_2} = \sum_{z \, : \, \norm{z}_2 < {\norm{x}_2}/4} \Psi^\indi(0,z)
      \left( \begin{array}{cc} z_1^2 & z_1 z_2 \\ z_1 z_2 & z_2^2 \end{array}\right)
    \end{align}
    is the covariance matrix of an increment under $\Psi^\indi$ (restricted to jump size ${\norm{x}_2}/4$). 
    Let $\widehat{\lambda}_1 \ge \widehat{\lambda}_2 > 0$ be the eigenvalues of $\widehat{C}^\indi_{\norm{x}_2}$
    (by irreducibility, $\widehat{C}^\indi_{\norm{x}_2}$ must be invertible, at least when $M$ is large enough because $\norm{x}_2 \ge M$ by assumption). Then
    \begin{align}
      & \sup_{\substack{(x_1,x_2) \in \Z^2,\\ \norm{(x_1,x_2)}_2 \ge M}} \Big( (1, 0) \widehat{C}^\indi_{\norm{x}_2} (1, 0)^T
      + (0, 1) \widehat{C}^\indi_{\norm{x}_2} (0, 1)^T \notag \\[-3ex]
      & \hspace{15em} -\beta (x_1/\norm{x}_2, x_2/\norm{x}_2) \widehat{C}^\indi_{\norm{x}_2} (x_1/\norm{x}_2, x_2/\norm{x}_2)^T \Big)
      \notag \\
      & \quad 
        \le 2 \widehat{\lambda}_1 - \beta \widehat{\lambda}_2 < 0
        \label{eq:ev.covmat}
    \end{align}
    if $\beta>2\widehat{\lambda}_1/\widehat{\lambda}_2$, which can be achieved by suitably tuning the parameters (see discussion in Remark~\ref{rem:reasonably anisotropic}).

    In total, we find
    \begin{align}
      \sum_{y \in \Z^2} \Psi^\indi(x,y)\big( f(y) -f(x)\big) \le -c \norm{x}_2^{-\beta}
      \quad \text{for } \norm{x}_2 \ge M
    \end{align}
    with a constant $c>0$.

    Combined with the total
    variation bound on $\Psi^\joint-\Psi^\indi$ from \eqref{eq:TVdistance-joint-ind-1step in prop}
    and the fact that the tail bound \eqref{eq:regen_time_tail_bounds} holds for $\Psi^\joint$
    as well as for $\Psi^\indi$, this implies 
    \begin{align}
      \sum_{y \in \Z^2} \Psi^\joint(x,y)\big( f(y) -f(x)\big) 
      & \le \sum_{y \in \Z^2} \Psi^\indi(x,y)\big( f(y) -f(x)\big) \notag \\
      & \qquad
      + \norm{f}_\infty \sum_{y \in \Z^2} \abs{\Psi^\joint(x,y) - \Psi^\indi(x,y)}
      \le -c \norm{x}_2^{-\beta},
    \end{align}
    i.e.\ \eqref{eq:Poissonproblemannulus} holds true.
  \end{proof}

  \begin{remark}[Covariance of increments under $\Psi^\indi$ is not too anisotropic]
    \label{rem:reasonably anisotropic}
      Recall that ``tuning the parameters'' of the logistic branching random walk to conform with the assumptions of Theorem~\ref{thm:eta_survival} means choosing the competition parameters $\lambda_{xy}$ in \eqref{def:f}, \eqref{eq:law of eta in log branching} small. Then the local population density tends to be large with small relative fluctuations. Thus, regeneration will tend to occur fast and $\Psi^\indi(x,y)$ will be close to $p_{yx} = p_{x-y}$ where $p$ is the displacement kernel from parent to offspring in \eqref{eq:law of eta in log branching}. Since $p$ is by assumption irreducible with finite range, $\widehat{\lambda}_1$ and $\widehat{\lambda}_2$ will be uniformly bounded away from $0$ and from $\infty$ in the regime we consider, while we at the same time we can choose $\beta$ large. Thus, the condition on $\widehat{\lambda}_1$ and $\widehat{\lambda}_2$ from \eqref{eq:ev.covmat} can be fulfilled.
      
      Note also that if $(p_{xy})$ and $(\lambda_{xy})$ have additional symmetries, namely invariance under exchange of coordinates and plus mirror symmetry with respect to each coordinate individually, we have $\widehat{\lambda}_1 = \widehat{\lambda}_2$ and thus the condition in \eqref{eq:ev.covmat} is fulfilled for all $\beta>2$.
  \end{remark}
  
  \begin{remark}[The Green function of $2$-dimensional Brownian motion in an annulus]
    \label{rem:BMGreenfct.annulus} 
    For comparison, we recall here the answer corresponding to \eqref{eq:Poissonproblemannulus} for $2$-dimensional Brownian motion.
    The radius process 
    is in this case a $2$-dimensional Bessel process $(X_t)_{t \ge 0}$,
    solving $dX_t = dB_t +(2X_t)^{-1} dt$, i.e.\ the generator is given by
    $L f(x) = \frac12 \sigma^2(x) f''(x) + \mu(x) f'(x)$ 
    with $\sigma^2(x) \equiv 1$, $\mu(x)=1/(2x)$.

    We have
    \[
      s(\eta) =
      \exp\Big( - \int^\eta \frac{2\mu(\xi)}{\sigma^2(\xi)} \,d\xi \Big) =
      \exp\Big( - \int^\eta \frac1\xi \,d\xi \Big) = \exp(-\log \eta) = \frac1\eta,
    \]
    hence the scale function is
    \[
      S(x) = \int^x s(\eta) \, d\eta = \log x .
    \]
    Let $0 < a < b$, consider the process which is stopped upon leaving $(a,b)$.

    For $x \in [a,b]$ the Green function is
    \begin{equation}
      G_{a,b}(x,\xi) =
      \begin{cases}
        \displaystyle
        \frac{2(\log x - \log a)(\log b - \log \xi)}{\log b - \log a}\xi,
        & x \le \xi \le b, \\[2ex]
        \displaystyle
        \frac{2(\log b - \log x)(\log \xi - \log a)}{\log b - \log a}\xi,
        & a \le \xi \le x,
      \end{cases}
    \end{equation}
    see, e.g., 
    \cite[Rem.~3.3 in Ch.~15.3]{KarlinTaylor}.
    Thus for $\beta>2$ and $a < x < b$
    \begin{align}
      & \int_a^b G_{a,b}(x,\xi) \xi^{-\beta} \, d\xi \notag \\
      & = 2 \frac{\log b - \log x}{\log b - \log a} \int_a^x (\log \xi - \log a)\xi^{-(\beta-1)} \, d\xi
        + 2 \frac{\log x - \log a}{\log b - \log a} \int_x^b (\log b - \log \xi)\xi^{-(\beta-1)} \, d\xi \notag \\
      &
        = 2 \frac{\log b - \log x}{\log b - \log a} \bigg[ -\frac{(\beta-2)^{-1} + \log \xi}{(\beta-2) \xi^{\beta-2}}
        + \frac{\log a}{(\beta-2) \xi^{\beta-2}} \bigg]_{\xi=a}^{\xi=x} \notag \\
      & \quad + 2 \frac{\log x - \log a}{\log b - \log a} \bigg[
        - \frac{\log b}{(\beta-2) \xi^{\beta-2}} + \frac{(\beta-2)^{-1} + \log \xi}{(\beta-2) \xi^{\beta-2}}
        \bigg]_{\xi=x}^{\xi=b} \notag \\
      & = \frac{2}{\beta-2} \frac{\log b - \log x}{\log b - \log a} \bigg(
        -\frac{(\beta-2)^{-1} + \log x}{x^{\beta-2}}
        + \frac{\log a}{x^{\beta-2}}
        + \frac{(\beta-2)^{-1} + \log a}{a^{\beta-2}} - \frac{\log a}{a^{\beta-2}} \bigg)
        \notag \\
      & \quad + \frac{2}{\beta-2} \frac{\log x - \log a}{\log b - \log a} \bigg(
        - \frac{\log b}{b^{\beta-2}} + \frac{(\beta-2)^{-1} + \log b}{b^{\beta-2}}
        + \frac{\log b}{x^{\beta-2}} - \frac{(\beta-2)^{-1} + \log x}{x^{\beta-2}}
        \bigg) \notag \\
      & = \frac{2}{\beta-2} \frac{\log b - \log x}{\log b - \log a} \bigg(
        \frac{\log a - \log x}{x^{\beta-2}} - \frac{(\beta-2)^{-1}}{x^{\beta-2}} + \frac{(\beta-2)^{-1}}{a^{\beta-2}} \bigg)
        \notag \\
      & \quad + \frac{2}{\beta-2} \frac{\log x - \log a}{\log b - \log a} \bigg(
        \frac{\log b - \log x}{x^{\beta-2}} + \frac{(\beta-2)^{-1}}{b^{\beta-2}}
        - \frac{(\beta-2)^{-1}}{x^{\beta-2}}
        \bigg) \notag \\
      & = \frac{2}{(\beta-2)^2} \frac{\log b - \log x}{\log b - \log a} \big( a^{2-\beta} - x^{2-\beta} \big)
        + \frac{2}{(\beta-2)^2} \frac{\log x - \log a}{\log b - \log a} \big( b^{2-\beta} - x^{2-\beta} \big)
        \notag \\
      & = \frac{2}{(\beta-2)^2 (\log b - \log a)}
        \Big( \big(\log b - \log x \big)\big( a^{2-\beta} - x^{2-\beta} \big)
        + \big(\log x - \log a\big)\big( b^{2-\beta} - x^{2-\beta} \big) \Big)
        \notag \\
      & = \frac{2}{(\beta-2)^2 (\log b - \log a)}
        \Big( a^{2-\beta}\log b - x^{2-\beta}\log b - a^{2-\beta}\log x + x^{2-\beta}\log x \notag \\
      & \hspace{11.5em} + b^{2-\beta}\log x - x^{2-\beta}\log x - b^{2-\beta}\log a + x^{2-\beta}\log a
        \Big) \notag \\
      & = \frac{2}{(\beta-2)^2 (\log b - \log a)}
        \Big( -(\log b - \log a) x^{2-\beta} + \big( b^{2-\beta} - a^{2-\beta} \big) \log x
        + a^{2-\beta}\log b - b^{2-\beta}\log a \Big) \notag \\
      & =: f_{a,b}(x)
    \end{align}
    (note $\int \xi^{-c} \log \xi \, d\xi = -(c-1)^{-1} \xi^{1-c} \big((c-1)^{-1} + \log \xi\big)$ for $c>1$). It is straightforward to check that indeed $L f_{a,b}(x) = -x^{-\beta} $ for $a < x < b$.
  \end{remark}

\section{Coalescing probabilities}
\label{sect:Coalescing probabilities}
  
        We want to quantify the probabilities for the ancestral lineages to coalesce in the next step. For that we consider different cases of starting positions and possible positions of the ancestor. For this setting we condition on the environmental process $\eta$, which gives us the number of individuals on each site. As we recall, given $\eta$ we can define the dynamics of the ancestral lineages. To that we now want to add the probabilities for coalescence of two ancestral lineages.

        To that end let $Y_{n,i}(x,y)$ be the number of children of the $i$-th individual from site $x$ at time $n$ that migrate to site $y$. By definition of the $Y$'s we obtain
        \begin{equation}
                \label{eq:eta(n+1)_as_a_sum_of_Ys}
                \eta_{n+1}(y) = \sum_{x\in B_{R_X}(y)} \sum_{i=1}^{\eta_n(x)} Y_{n,i}(x,y).
        \end{equation}

        Now let $y_1,y_2,x\in\bZ^2$ and $y_1\neq y_2$. Say we knew the values of all $Y$'s, then the probability for an ancestral lineage at position $y_1$ and another at position $y_2$ at time $n+1$ to coalesce at time $n$ at position $x$ is given by

        \begin{align*}
                \sum_{i=1}^{\eta_n(x)} \frac{Y_{n,i}(x,y_1)}{\eta_{n+1}(y_1)} \cdot \frac{Y_{n,i}(x,y_2)}{\eta_{n+1}(y_2)},
        \end{align*}
        where, if the ancestral lineage is at some site, we choose the corresponding individual uniformly among all available. Since the information provided by the environment is only the values of $\eta$, we need to calculate the conditional expectation of the above expression, conditioned on $\eta$. More precisely, it is sufficient to condition on $\eta_n$ and $\eta_{n+1}$.

        \begin{align*}
                \bE&\Big[ \sum_{i=1}^{\eta_n(x)} \frac{Y_{n,i}(x,y_1)}{\eta_{n+1}(y_1)} \cdot \frac{Y_{n,i}(x,y_2)}{\eta_{n+1}(y_2)}\,\vert\, \eta_n,\eta_{n+1} \Big]\\
                &= \frac{1}{\eta_{n+1}(y_1)\eta_{n+1}(y_2)} \sum_{i=1}^{\eta_n(x)} \bE\big[ Y_{n,i}(x,y_1)Y_{n,i}(x,y_2)\,\vert\, \eta_n,\eta_{n+1} \big]\\
                &= \frac{1}{\eta_{n+1}(y_1)\eta_{n+1}(y_2)} \sum_{i=1}^{\eta_n(x)} \bE\big[ Y_{n,i}(x,y_1)\,\vert\, \eta_n,\eta_{n+1} \big]\bE\big[ Y_{n,i}(x,y_2)\,\vert\, \eta_n,\eta_{n+1} \big],
        \end{align*}
        where in the last line we used that $Y_{n,i}(x,y_1)$ and $Y_{n,i}(x,y_2)$ are, conditioned on $\eta_n$, in fact independent Poisson random variables with parameters $f(x;\eta_n)p_{x,y_1}/\eta_n(x)$ and $f(x;\eta_n)p_{x,y_2}/\eta_n(x)$. Note that also $\eta_{n+1}(y) \sim \text{Poi}(\sum_z p_{z,y}f(z;\eta_n))$ and thus, by \eqref{eq:eta(n+1)_as_a_sum_of_Ys}, conditioned on $\eta_{n+1}(y)$ the collection of $Y_{n,i}(x,y)$'s has a multinomial distribution with parameters $\eta_{n+1}(y)$ and
        \begin{equation*}
                \frac{\frac{f(x;\eta_n)}{\eta_n(x)}p_{x,y}}{\sum_z f(z;\eta_n)p_{z,y}}.
        \end{equation*}
        Therefore $Y_{n,i}(x,y_1)$ in the above conditioned expectation is binomial random variable and
        \begin{equation}
                \label{eq:condition_expectation_single_Y}
                \bE\big[ Y_{n,i}(x,y_1)\,\vert\, \eta_n,\eta_{n+1} \big] = \eta_{n+1}(y_1) \frac{\frac{f(x;\eta_n)}{\eta_n(x)}p_{x,y}}{\sum_z f(z;\eta_n)p_{z,y}}.
        \end{equation}
        It follows that the probability for coalescence on the site $x$ at time $n$, starting from $y_1$ and $y_2$ at time $n+1$ is given by
        \begin{align*}
                p^{(2)}_{\eta,\text{coal}}\big(n;(y_1,y_2),(x,x)\big)
                &= \frac{\eta_n(x)}{\eta_{n+1}(y_1)\eta_{n+1}(y_2)} \frac{\eta_{n+1}(y_1)\frac{f(x;\eta_n)}{\eta_n(x)}p_{x,y_1}\cdot\eta_{n+1}(y_2)\frac{f(x;\eta_n)}{\eta_n(x)}p_{x,y_2}}{\sum_z f(z;\eta_n)p_{z,y_1}\cdot\sum_{z'}f(z';\eta_n)p_{z',y_2}}\\
                &= \frac{f(x;\eta_n)p_{x,y_1}\cdot f(x;\eta_n)p_{x,y_2}}{\eta_n(x)\sum_z f(z;\eta_n)p_{z,y_1}\cdot\sum_{z'}f(z';\eta_n)p_{z',y_2}}.
        \end{align*}
        Next we study the probability that the two ancestral lineages meet on the same site but do not coalesce. Again we can consider what this probability would look like if we knew the values of all $Y$'s. Here we get, again for $y_1\neq y_2$,
        \begin{equation*}
                \sum_{i=1}^{\eta_n(x)}\frac{Y_{n,i}(x,y_1)}{\eta_{n+1}(y_1)} \sum_{j\neq i}^{\eta_n(x)} \frac{Y_{n,j}(x,y_2)}{\eta_{n+1}(y_2)}.
        \end{equation*}
        By similar arguments as above we obtain
        \begin{align*}
                \bE&\Big[ \sum_{i=1}^{\eta_n(x)}\frac{Y_{n,i}(x,y_1)}{\eta_{n+1}(y_1)} \sum_{j\neq i}^{\eta_n(x)} \frac{Y_{n,j}(x,y_2)}{\eta_{n+1}(y_2)}\,\vert\, \eta_n,\eta_{n+1} \Big]\\
                &=\eta_n(x)(\eta_n(x)-1)\frac{\frac{f(x;\eta_n)}{\eta_n(x)}p_{x,y_1} \cdot\frac{f(x;\eta_n)}{\eta_n(x)}p_{x,y_2}}{\sum_z f(z;\eta_n)p_{z,y_1} \cdot \sum_{z'} f(z';\eta_n)p_{z',y_2}}.
        \end{align*}
        And thus
        \begin{equation}
                p^{(2)}_{\eta,\text{non-coal}}\big(n; (y_1,y_2),(x,x) \big) = \Big(1-\frac{1}{\eta_n(x)}\Big)\frac{f(x;\eta_n)p_{x,y_1}f(x;\eta_n)p_{x,y_2}}{\sum_z f(z;\eta_n)p_{z,y_1} \cdot \sum_{z'} f(z';\eta_n)p_{z',y_2}}.
        \end{equation}
        Next we consider the two lineages starting from the same site and separating. Let $x_1,x_2\in\bZ^2$ and $x_1\neq x_2$. By similar considerations as above we start with
        \begin{align*}
                \bE&\Big[ \sum_{i=1}^{\eta_n(x_1)} \sum_{j=1}^{\eta_n(x_2)} \frac{Y_{n,i}(x_1,y) Y_{n,j}(x_2,y)}{\eta_{n+1}(y) (\eta_{n+1}(y)-1)}\,\vert\, \eta_n,\eta_{n+1} \Big]\\
                &=\frac{1}{\eta_{n+1}(y)(\eta_{n+1}(y)-1)}\sum_{i=1}^{\eta_n(x_1)} \sum_{j=1}^{\eta_n(x_2)} \eta_{n+1}(y)(\eta_{n+1}(y)-1)\frac{\frac{f(x_1;\eta_n)}{\eta_n(x_1)}p_{x_1,y}}{\sum_z f(z;\eta_n)p_{z,y}} \cdot\frac{\frac{f(x_2;\eta_n)}{\eta_n(x_2)}p_{x_2,y}}{\sum_{z'}f(z';\eta_n)p_{z',y}}\\
                &=\frac{f(x_1;\eta_n)p_{x_1,y} f(x_2;\eta_n)p_{x_2,y}}{(\sum_z f(z;\eta_n)p_{z,y})^2}
        \end{align*}
        and thus, if the lineages are yet to coalesce
        \begin{equation}
                p^{(2)}_{\eta}\big(n; (y,y),(x_1,x_2) \big) = \frac{f(x_1;\eta_n)p_{x_1,y} f(x_2;\eta_n)p_{x_2,y}}{(\sum_z f(z;\eta_n)p_{z,y})^2}.
        \end{equation}
        For the case of different start and end positions we obtain
        \begin{align*}
                p^{(2)}_{\eta}\big(n; (y_1,y_2),(x_1,x_2) \big) &= \bE\Big[ \sum_{i=1}^{\eta_n(x_1)} \sum_{j=1}^{\eta_n(x_2)} \frac{Y_{n,i}(x_1,y_1)}{\eta_{n+1}(y_1)} \cdot \frac{Y_{n,j}(x_2,y_2)}{\eta_{n+1}(y_2)}\,\vert\, \eta_n,\eta_{n+1} \Big]\\
                &= \frac{f(x_1,\eta_n)p_{x_1,y_1} \cdot f(x_2;\eta_n)p_{x_2,y_2}}{\sum_z f(z;\eta_n)p_{z,y_1} \cdot \sum_{z'}f(z';\eta_n)p_{z',y_2}}.
        \end{align*}
        That leaves the calculations for starting on the same position and moving to the same position with and without coalescing.

        \begin{align*}
                p^{(2)}_{\eta,\text{coal}}\big(n;(y,y),(x,x) \big) &=\bE\Big[ \sum_{i=1}^{\eta_n(x)} \frac{Y_{n,i}(x,y)(Y_{n,i}(x,y)-1)}{\eta_{n+1}(y)(\eta_{n+1}(y)-1)}\,\vert\, \eta_n,\eta_{n+1} \Big]\\
                &=\frac{1}{\eta_{n+1}(y)(\eta_{n+1}(y)-1)} \sum_{i=1}^{\eta_n(x)} \eta_{n+1}(y)(\eta_{n+1}(y)-1)\bigg(\frac{\frac{f(x;\eta_n)}{\eta_n(x)}p_{x,y}}{\sum_z f(z;\eta_n)p_{z,y}}\bigg)^2\\
                &= \frac{1}{\eta_n(x)} \bigg(\frac{f(x;\eta_n)p_{x,y}}{\sum_z f(z;\eta_n)p_{z,y}} \bigg)^2.
        \end{align*}
        This leaves us with the last case
        \begin{align*}
                p^{(2)}_{\eta,\text{non-coal}}\big(n; (y,y),(x,x) \big)&=\bE\Big[ \sum_{i=1}^{\eta_n(x)}\sum_{j\neq i}^{\eta_n(x)} \frac{Y_{n,i}(x,y)Y_{n,j}(x,y)}{\eta_{n+1}(y)(\eta_{n+1}(y)-1)}\,\vert\, \eta_n,\eta_{n+1} \Big]\\
                &=\frac{1}{\eta_{n+1}(y)(\eta_{n+1}(y)-1)} \sum_{i=1}^{\eta_n(x)}\sum_{j\neq i}^{\eta_n(x)} \eta_{n+1}(y)(\eta_{n+1}(y)-1)\bigg(\frac{\frac{f(x;\eta_n)}{\eta_n(x)}p_{x,y}}{\sum_z f(z;\eta_n)p_{z,y}}\bigg)^2\\
                &=\frac{\eta_n(x)(\eta_n(x)-1)}{(\eta_n(x))^2} \bigg(\frac{\frac{f(x;\eta_n)}{\eta_n(x)}p_{x,y}}{\sum_z f(z;\eta_n)p_{z,y}}\bigg)^2\\
                &=\Big(1-\frac{1}{\eta_n(x)}\Big)\bigg(\frac{\frac{f(x;\eta_n)}{\eta_n(x)}p_{x,y}}{\sum_z f(z;\eta_n)p_{z,y}}\bigg)^2,
        \end{align*}
        which concludes all cases. To summarize the calculations, let $x,y,x_1,x_2,y_1,y_2\in\bZ^d$ be pairwise different. Then we have the following cases for a transition of two ancestral lineages:

                \begin{equation}
                \label{eq:D5}
                        p^{(2)}_{\eta,\text{coal}}\big(n;(y_1,y_2),(x,x)\big)
                        = \frac{f(x;\eta_n)p_{x,y_1}\cdot f(x;\eta_n)p_{x,y_2}}{\eta_n(x)\sum_z f(z;\eta_n)p_{z,y_1}\cdot\sum_{z'}f(z';\eta_n)p_{z',y_2}}
                \end{equation}

                \begin{equation}
                \label{eq:D6}
                        p^{(2)}_{\eta,\text{non-coal}}\big(n; (y_1,y_2),(x,x) \big) = \Big(1-\frac{1}{\eta_n(x)}\Big)\frac{f(x;\eta_n)p_{x,y_1}f(x;\eta_n)p_{x,y_2}}{\sum_z f(z;\eta_n)p_{z,y_1} \cdot \sum_{z'} f(z';\eta_n)p_{z',y_2}}
                \end{equation}

                \begin{equation}
                \label{eq:D7}
                        p^{(2)}_{\eta}\big(n; (y,y),(x_1,x_2) \big) = \frac{f(x_1;\eta_n)p_{x_1,y} f(x_2;\eta_n)p_{x_2,y}}{(\sum_z f(z;\eta_n)p_{z,y})^2}
                \end{equation}

                \begin{equation}
                \label{eq:D8}
                        p^{(2)}_{\eta}\big(n; (y_1,y_2),(x_1,x_2) \big)= \frac{f(x_1,\eta_n)p_{x_1,y_1} \cdot f(x_2;\eta_n)p_{x_2,y_2}}{\sum_z f(z;\eta_n)p_{z,y_1} \cdot \sum_{z'}f(z';\eta_n)p_{z',y_2}}
                \end{equation}

                \begin{equation}
                \label{eq:D9}
                        p^{(2)}_{\eta,\text{coal}}\big(n;(y,y),(x,x) \big) = \frac{1}{\eta_n(x)} \bigg(\frac{f(x;\eta_n)p_{x,y}}{\sum_z f(z;\eta_n)p_{z,y}} \bigg)^2
                \end{equation}

                \begin{equation}
                \label{eq:D10}
                        p^{(2)}_{\eta,\text{non-coal}}\big(n; (y,y),(x,x) \big)=\Big(1-\frac{1}{\eta_n(x)}\Big)\bigg(\frac{\frac{f(x;\eta_n)}{\eta_n(x)}p_{x,y}}{\sum_z f(z;\eta_n)p_{z,y}}\bigg)^2.
                \end{equation}

        Note that we only need to consider the chance of coalescence if the two lineages meet on the same site, which is why we split those cases into a probability for coalescence and non-coalescence.
\end{appendix}

\subsection*{Acknowledgements}
M.B.\ and A.D.\ would like to thank Alison Etheridge for many inspiring discussions over the years.


\newcommand{\etalchar}[1]{$^{#1}$}
\providecommand{\bysame}{\leavevmode\hbox to3em{\hrulefill}\thinspace}
\providecommand{\MR}{\relax\ifhmode\unskip\space\fi MR }
\providecommand{\MRhref}[2]{%
	\href{http://www.ams.org/mathscinet-getitem?mr=#1}{#2}
}
\providecommand{\href}[2]{#2}

\end{document}